\setlist[enumerate]{label=\roman*)}
\theoremstyle{plain}
\newtheorem{nthr}{Theorem}[section]
\newtheorem{theorem}		[nthr]{Theorem}
\newtheorem{proposition}[nthr]{Proposition}
\newtheorem{lemma}		[nthr]{Lemma}
\newtheorem{corollary}	[nthr]{Corollary}
\newtheorem{remark}		[nthr]{Remark}
\theoremstyle{definition}
\def\mat#1{\ensuremath{#1}\xspace}
\def\dmat#1#2{\gdef#1{\mat{#2}}}
\def\oper#1#2{\dmat#1{\operatorname{#2}}}
\dmat\bk{\Bbbk}
\dmat\bA{\mathbb{A}}
\dmat\bC{\mathbb{C}}
\dmat\bF{\mathbb{F}}
\dmat\bG{\mathbb{G}}
\dmat\bH{\mathbb{H}}
\dmat\bL{\mathbb{L}}
\dmat\bN{\mathbb{N}}
\dmat\bQ{\mathbb{Q}}
\dmat\bP{\mathbb{P}}
\dmat\bR{\mathbb{R}}
\dmat\bT{\mathbb{T}}
\dmat\bZ{\mathbb{Z}}
\dmat\cA{\mathcal{A}}
\dmat\cC{\mathcal{C}}
\dmat\cD{\mathcal{D}}
\dmat\cF{\mathcal{F}}
\dmat\cE{\mathcal{E}}
\dmat\cH{\mathcal{H}}
\dmat\cM{\mathcal{M}}
\dmat\cO{\mathcal{O}}
\dmat\cP{\mathcal{P}}
\dmat\cS{\mathcal{S}}
\dmat\cT{\mathcal{T}}
\dmat\cU{\mathcal{U}}
\dmat\cV{\mathcal{V}}
\dmat\cZ{\mathcal{Z}}
\dmat\sA{\mathsf{A}}
\dmat\sH{\mathsf{H}}
\dmat\sI{\mathsf{I}}
\dmat\sJ{\mathsf{J}}
\dmat{\bfM}{\mathbf{M}}
\dmat\eps{\varepsilon}
\dmat\ta{\tau}
\dmat\om{\omega}
\dmat\la{\lambda}
\oper\Hom{Hom}
\oper\Ext{Ext}
\oper\GL{GL}
\oper\End{End}
\oper\ch{ch}
\oper\cl{cl}
\oper\rk{rk}
\oper\Aut{Aut}
\oper\Exp{Exp}
\oper\Log{Log}
\oper\Coh{Coh}
\oper\Ind{Ind}
\oper\coker{coker}
\oper\im{im}
\oper\Pic{Pic}
\oper\Res{Res}
\def\oh{\frac12}
\def\iso{\simeq}
\def\ts{\otimes}
\def\wtl#1{\widetilde{#1}}
\def\br{\linebreak}
\def\mto{\mapsto}
\def\inv{^{-1}}
\def\bop{\bigoplus}
\def\nil{{\mathrm{nil}}}
\def\sst{{\mathrm{ss}}}
\def\bun{{\mathrm{vec}}}
\def\gen{{\mathrm{gen}}}
\def\eq#1{\begin{equation}#1\end{equation}}
\def\eql#1#2{\begin{equation}\label{#2}#1\end{equation}}
\def\set#1{\mat{\left\{#1\right\}}}
\def\sets#1#2{\mat{\left\{#1\ \right\vert\left.#2\right\}}}
\def\ang#1{\mat{\left\langle#1\right\rangle}}
\def\rbr#1{\left(#1\right)}
\def\sbr#1{\left[#1\right]}
\def\n#1{\mat{\left\lvert#1\right\rvert}}
\def\sps{\supset}
\def\sbs{\subset}
\def\hlr{\hookleftarrow}
\def\epi{\twoheadrightarrow}
\def\one{\mathbbm{1}}
\def\z{z}
\def\w{w}
\def\Z{\mathbb{Z}}
\def\ur{\bar r}
\def\ud{\bar d}
\def\oE{\bar E} 
\def\oF{\bar F}
\def\oG{\bar G}
\def\ual{\bar\al}
\oper\Nil{\mathbf{Nil}} 
\oper\QS{\mathbf{QS}}
\oper\sCoh{\mathbf{Coh}}
\oper\fCoh{\widetilde{\mathbf{Coh}}}
\oper\stA{\mathbf A} 
\oper\Filt{\mathbf{Flag}} 
\oper\Higgs{\mathbf{Higgs}}
\oper\vol{vol}
\oper\Ker{ker} 
\oper\Im{im}
\oper\Coker{coker}
\def\isom{{\mathrm{iso}}} 
\def\dv{\mid} 
\oper\br{\mathbf r}
\oper\bd{\mathbf d}
\def\note#1{\ \todo[inline,bordercolor=green,backgroundcolor=green!25]{\textbf{Note:} #1}}
\def\note#1{}
\def\vsp#1{} 
\def\vsp{\vspace} 
\begin{document}
\title{Counting Higgs bundles and type $A$ quiver bundles}
\author{Sergey Mozgovoy}
\author[Olivier Schiffmann]{Olivier Schiffmann \dag}
\email{mozgovoy@maths.tcd.ie}
\email{Olivier.Schiffmann@math.u-psud.fr}
\thanks{\dag \; partially supported by ANR grant 13-BS01-0001-01}
\date{\today}
\begin{abstract}
We prove a closed formula counting semistable twisted (or meromorphic) Higgs bundles of fixed rank and degree over a smooth projective curve defined over a finite field of genus $g$, when the degree of twisting line bundle is at least $2g-2$ (this includes the case of usual Higgs bundles). This yields a closed expression for the Donaldson-Thomas invariants of the moduli spaces of twisted Higgs bundles.
We similarly deal with twisted quiver sheaves of type A (finite or affine), obtaining in particular
a Harder-Narasimhan-type formula counting semistable $U(p,q)$-Higgs bundles over a smooth projective curve defined over a finite field. 
\end{abstract}
\maketitle
\tableofcontents

\section{Introduction and statement of results}
\label{sec1}
\vsp{.1in}

\subsection{}
Let $X$ be a smooth projective and geometrically connected curve of genus $g$, defined over a field $\bk$. Let $D$ be a divisor on $X$, of degree $l$. A \textit{$D$-twisted} (or \textit{meromorphic}) Higgs bundle over $X$ is a pair $(E,\te)$, where $E$ is a vector bundle over $X$ and $\te\in\Hom(E,E(D))$. When $D=K$, the canonical divisor of $X$, one recovers the usual notion of Higgs bundles introduced in \cite{Hitchin}. There is a natural notion of semistability for these pairs and one can construct the moduli stack $\Higgs^{\sst}_D(r,d)$ of semistable $D$-twisted Higgs bundles over $X$ of rank $r$ and degree $d$. 

\vsp{.1in}

Despite its importance in algebraic geometry, in the theory of integrable systems and more recently in the theory of automorphic forms, the topology $\Higgs^{\sst}_D(r,d)$ still remains somewhat mysterious. Observe that twisting by a line bundle of degree one yields an isomorphism $\Higgs^{\sst}_D(r,d) \simeq \Higgs^{\sst}_D(r,d+r)$ so that only the value of $d$ in $\Z/r\Z$ matters.
In \cite{HRV} (see also  \cite[Conj.5.6]{hausel_mirrora}), Hausel and Rodriguez-Villegas formulated a precise conjecture for the
Poincar\'e polynomial of $\Higgs^{\sst}_K(r,d)$ when $\bk=\mathbb{C}$ and $\gcd(r,d)=1$. This conjecture was later refined by the first author in \cite[Conj.3]{mozgovoy_solutions} (see also \cite{chuang_motivic}) to a conjecture for the motive $[\Higgs^{\sst}_D(r,d)]$ for any divisor $D$ of degree $l \geq 2g-2$. In the case of $D=K$ this conjecture was verified in low ranks
\cite{garcia-prada_motives,gothen_betti} as well as for the $y$-genus specialization \cite{garcia-prada_y}. Some very interesting results for coprime $r$ and $d$ were also obtained in \cite{chaudouard_sura,chaudouard_sur}. In \cite{schiffmann_indecomposable} the second author gave an explicit formula\footnote{the identification of this formula with the one predicted in \cite{HRV} is still an open problem.} for the Poincar\'e polynomial of $\Higgs^{\sst}_K(r,d)$ when $\bk=\mathbb{C}$ and $\gcd(r,d)=1$, by counting the number of points of $\Higgs^{\sst}_K(r,d)$ over a finite field of high enough characteristic and using the Weil conjectures. This point count in turn relies on a geometric deformation argument to show that (in high enough characteristic) $\n{\Higgs^{\sst}_K(r,d)(\mathbb{F}_q)}=q^{1+(g-1)r^2}\sA_{X,r,d}$, where $\sA_{X,r,d}$ stands for the number of geometrically indecomposable vector bundles on $X$ of rank $r$ and degree $d$. A closed expression for $\sA_{X,r,d}$ is derived in \cite{schiffmann_indecomposable}.

\vsp{.1in}

The main aim of this paper is to generalize the above results to arbitrary meromorphic Higgs bundles (i.e. to $\Higgs^{\sst}_D(r,d)$ for any $D$ of degree $l \geq 2g-2$) and to an arbitrary pair $(r,d)$ (i.e. dropping the coprimality assumption on $r$ and $d$). 
Our approach is in part related to that of \cite{schiffmann_indecomposable}, but it replaces the geometric deformation argument (only available in the symplectic case $K=D$ and in high enough characteristic) by an argument involving the Hall algebra of the category of
meromorphic Higgs bundles, which works for all $D \geq K$, in all characteristics and which yields at the same time the motive of $\Higgs^{\sst}_D(r,d)$ for all $r$ and $d$. We also partly extend these results to the moduli spaces of affine type $A$ quiver bundles (including the moduli spaces of chains of Garcia-Prada et al. on the one hand, and moduli spaces of $U(p,q)$-Higgs bundles on the other).

\vsp{.1in}

Our main result is formulated in terms of the Donaldson-Thomas invariants of the moduli stack $\Higgs_D^\sst(r,d)$. Let $\bk$ be a finite field with $q$ elements, put 
$$\sH_D(r,d)=(-q^\oh)^{-lr^2}\n{\Higgs^{\sst}_D(r,d)(\bk)},$$
and define the DT-invariants $\Om_D(r,d)$ by the formula
\eql{
\sum_{d/r=\ta}\frac{\Om_D(r,d)}{q-1}\w^r\z^d
=\Log\rbr{\sum_{d/r=\ta}\sH_D(r,d)\w^r\z^d},\qquad \ta\in\bR,}{eq:Om00}
where \Log is the plethystic logarithm (see below or e.g.~\cite{mozgovoy_computational}).
It was conjectured in 
\cite{mozgovoy_solutions} that $\Om_D(r,d)$ is a polynomial in the Weil numbers of $X$ which is independent of $d$, regardless of whether $\gcd(r,d)=1$ or not.
Observe that if $\gcd(r,d)=1$ then $\Om_D(r,d)=(q-1)\sH_D(r,d)$, but in general the DT invariant $\Om_D(r,d)$ involves the volume of the stacks
$\Higgs_D^{\sst}(\frac{r}{n},\frac{d}{n})$ for all $n \dv \gcd(r,d)$.
In this paper we give an explicit formula for the invariants $\Om_D(r,d)$ when $\deg D\ge 2g-2$. 

\vsp{.1in}

Before we can state our results, we need to introduce some amount of notation. Let 
$$Z_X(z)=\sum_{d \geq 0} \n{S^dX(\bk)}z^d, \qquad \widetilde{Z}_X(z)=z^{1-g}Z_X(z)$$ 
denote the zeta function of $X$ and its renormalization. 
Given a partition $\lambda=(1^{r_1},2^{r_2},\ldots,t^{r_t})$, we set
$$J_{\lambda}(\z)=\prod_{s \in \lambda} Z^*_X(q^{-1-l(s)}\z^{a(s)})$$
where $a(s)$ and $l(s)$ are respectively the arm and the leg lengths of $s\in\la$ \cite[VI.6.14]{macdonald_symmetric} and
\[
Z^*_X(q^{-1}z)
=\begin{cases}
Z_X(q^{-1}z)\qquad & \text{if } z\ne1,q,\\
\Res_{z=1}Z_X(q^{-1}z)=q^{1-g}\frac{[\Pic^0(X)]}{q-1}\qquad & \text{if }z=1.
\end{cases}
\]

Next, write $n=l(\lambda)=\sum_i r_i$,  
$$r_{<i}=\sum_{k<i} r_k, \qquad r_{>i}=\sum_{k>i}r_k, \qquad r_{[i,j]}=\sum_{k=i}^j r_k$$
and consider the rational function
$$L(\z_n, \ldots, \z_1)=\frac{1}{\prod_{i<j} \widetilde{Z}_X\big(\frac{\z_i}{\z_j}\big)} \sum_{\sigma \in \mathfrak{S}_n} \sigma \left[ \prod_{i<j}
\wtl Z_X\rbr{\frac{\z_i}{\z_j}} \cdot \frac{1}{\prod_{i<n} \rbr{1-q\frac{\z_{i+1}}{\z_i}}} \cdot 
\frac{1}{1-\z_1}\right].$$
Denote by $\Res_{\lambda}$ the operator of taking the iterated residue along
\begin{align*}
&\frac{\z_n}{\z_{n-1}}=\frac{\z_{n-1}}{\z_{n-2}}= \cdots = \frac{\z_{2+r_{<t}}}{\z_{1+r_{<t}}}=q^{-1}\\
&\vdots \qquad \qquad \vdots \qquad  \qquad\qquad \vdots\\
&\frac{\z_{r_1}}{\z_{r_1-1}}= \frac{\z_{r_1-1}}{\z_{r_1-2}}= \cdots = \frac{\z_{2}}{\z_{1}}=q^{-1}.
\end{align*}
Put
$$\widetilde{H}_{\lambda}(\z_{1+ r_{<t}}, \ldots, \z_{1+r_{<i}}, \ldots, \z_1)
=\Res_{\lambda}\left[ L(\z_n, \ldots, \z_1) \prod_{\substack{j =1 \\ j \not\in \{r_{\leq i}\}}}^{n}\frac{d\z_j}{\z_j} \right]$$
and finally
$$H_{\lambda}(\z)=\widetilde{H}_{\lambda}(\z^tq^{-r_{<t}}, \ldots, \z^iq^{-r_{<i}}, \ldots, \z).$$
Note that if $r_i=0$ for some $i$ then the function $\widetilde{H}_{\lambda}$ is independent of its $i$th argument.

\vsp{.1in}

\begin{theorem}[see Proposition~\ref{P:Serre_duality}, Corollary \ref{cor:K0}, Section~6] \label{T:Maain}Let $X$ be a smooth projective geometrically connected curve of genus $g$ defined over a finite field $\bk$, $q=\n{\bk}$.
Let $D$ be a divisor on $X$ and let $l=\deg(D)$. 
Then
\begin{enumerate}
\item For any $r,d$ we have $\Om_K(r,d)=q\sA_{X,r,d}$,
\note{\textbf{Should be removed:} For any $r,d$ and $D$ we have $\Om_D(r,d)=\Om_{K-D}(r,d)$.
This is wrong. I explain it later in Proposition~\ref{P:Serre_duality}.
}
\item Define a series $X_r(z)$ by the formula
$$\sum_{r}X_{r}(z)w^r
=(q-1)\Log\rbr{\sum_\la(-q^\oh)^{l\ang{\la,\la}}
J_\la(\z)H_\la(\z)\w^{\n\la}},$$
where $\ang{\la,\la}=\sum_{i\ge1}\la_i'\la_i'$ and $\la'$ is the partition conjugate to \la.  Then the DT invariants $\Om_D(r,d)$ are given by
$$
\Om_D(r,d)=
\begin{cases}
-\sum_{\xi \in \mu_r}\xi^{-d}\Res_{z=\xi}
X_r(z) \frac{dz}{z},& \deg D>2g-2,\\
-q\sum_{\xi \in \mu_r}\xi^{-d}\Res_{z=\xi}
X_r(z) \frac{dz}{z},& D=K,
\end{cases}
$$
where $\mu_r$ stands for the set of $r$-th roots of unity.
\end{enumerate}
\end{theorem}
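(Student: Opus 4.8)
The plan is to compute $\n{\Higgs^{\sst}_D(r,d)(\bk)}$ via the Hall algebra of the category $\mathcal{C}_D$ of $D$-twisted Higgs sheaves on $X$, following the general philosophy that counting semistable objects in all slopes simultaneously is governed by a single generating identity in a completed Hall algebra, which one then inverts by a Harder--Narasimhan / plethystic-logarithm argument. First I would identify $\mathcal{C}_D$ with the category of (nilpotent, if $\deg D>2g-2$, or suitably bounded) representations of the sheaf of algebras $\operatorname{Sym}(\mathcal{O}(-D))$ — equivalently, with a category of "$1$-loop quiver sheaves" on $X$ — so that the Hall algebra of $\mathcal{C}_D$ is built out of the Hall algebra of $\Coh(X)$ by adjoining the Higgs field $\te$. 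The key input on the $\Coh(X)$ side is the explicit shuffle/residue description of the Hall algebra and of the "integral over all bundles" element, together with Hall's evaluation of the volume $\vol(\Coh_{\ge0}(r,d))$ in terms of $Z_X$; this is where the factors $Z^*_X(q^{-1-l(s)}\z^{a(s)})$, indexed by cells $s$ of a partition $\la$ recording the Jordan type of $\te$ at its "support", enter, and where the quantity $\ang{\la,\la}$ and the twist $(-q^{\oh})^{l\ang{\la,\la}}$ come from the Euler form of $\mathcal{C}_D$ (the extra $l$ copies of $\Hom/\Ext^1$ introduced by $\te$, cf.\ the $(-q^{\oh})^{-lr^2}$ normalization in $\sH_D$).

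Concretely, the steps are: (1) write the generating series $\sum_{r,d}\sH_D(r,d)\w^r\z^d$ (over all slopes) as the image under the integration map of the full Hall-algebra element $\sum_{\text{all }(E,\te)} \frac{1}{\n{\Aut(E,\te)}}$, and stratify the latter by the generic Jordan type $\la$ of $\te$; (2) for each $\la$, reduce the sub-sum to an integral over $\Coh(X)$ decorated by a flag of sub-Higgs-sheaves, which produces exactly the iterated-residue operator $\Res_\la$ applied to the rational function $L(\z_n,\dots,\z_1)$ — here $L$ is the shuffle kernel of the type-$A_\infty$ (or rather $\widehat{A}_0$) quiver attached to the $1$-loop, the denominators $1-q\z_{i+1}/\z_i$ and $1-\z_1$ being the standard Hall-algebra structure constants for a length-$n$ HN-type filtration, and $\wtl Z_X(\z_i/\z_j)$ the Kac polynomial / indecomposable-bundle factors; (3) assemble the pieces into $\sum_\la(-q^\oh)^{l\ang{\la,\la}}J_\la(\z)H_\la(\z)\w^{\n\la}$, apply \Log to pass from the "all objects" series to the "semistable" series, which by the definition \eqref{eq:Om00} of the DT invariants and the slope-$\ta$ refinement yields $\sum_r X_r(z)\w^r$; (4) finally extract $\Om_D(r,d)$ from $X_r(z)$ by the discrete Fourier transform over $\mu_r$, i.e.\ by summing residues $\Res_{z=\xi}X_r(z)\,\frac{dz}{z}$ against $\xi^{-d}$, which is precisely how one recovers a sequence from its slope-graded generating function given the periodicity $\Higgs^{\sst}_D(r,d)\simeq\Higgs^{\sst}_D(r,d+r)$. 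The extra factor $q$ in the $D=K$ case (and part (i), $\Om_K(r,d)=q\sA_{X,r,d}$) comes from Serre duality: for $D=K$ the stack $\Higgs^{\sst}_K(r,d)$ is (up to the cotangent-fibre correction) a twisted cotangent bundle over the bundle stack, contributing one extra power of $q$ relative to the generic $\deg D>2g-2$ normalization; this is the content of Proposition~\ref{P:Serre_duality} which I would invoke rather than reprove.

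The main obstacle — and the technical heart of the argument — is step (2): showing that summing the Hall-algebra contributions of all Higgs sheaves with a fixed generic Jordan type $\la$ collapses to the clean iterated residue $\widetilde H_\la$. This requires (a) a careful bookkeeping of which subobjects in $\mathcal{C}_D$ are again $D$-twisted Higgs \emph{bundles} versus torsion, controlling the difference between the naive $\Coh(X)$-count and the Higgs-sheaf count; (b) verifying that the higher-rank blocks of $\te$ (the rows $r_2,r_3,\dots$ of $\la$) contribute nested geometric series whose poles are exactly at $\z_{i+1}/\z_i=q^{-1}$, so that $\Res_\la$ is well-defined and matches the combinatorics of arms and legs in $J_\la$; and (c) handling the degree-$\ge 2g-2$ hypothesis, which is what guarantees that the relevant Ext-groups vanish ($H^1(X,\operatorname{End}(E)(D))=0$ generically, by Serre duality and slope estimates) so that the strata are affine bundles of the predicted dimension and the point-counts are genuinely polynomial. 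A secondary obstacle is justifying convergence and the interchange of $\Log$ with the infinite sum over $\la$ in the completed ring — this is routine but must be done with respect to the $\w$-adic (rank) filtration, using that each fixed power of $\w$ receives contributions from only finitely many $\la$.
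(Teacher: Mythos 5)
Your proposal captures the right general flavor (Hall algebra, integration map, a stratification indexed by partitions, $\Log$, extraction over $\mu_r$), but it misses the two ideas that actually make the argument run, and as written step (1) fails at the outset. First, the element $\sum_{(E,\te)}\frac{1}{|\Aut(E,\te)|}$ over \emph{all} Higgs sheaves of class $(r,d)$ that you propose to integrate does not exist: the stack $\Higgs_D(r,d)$ has infinite volume. The paper's essential move is to truncate to the subcategory $\cA_D^{\ge 0}$ of Higgs sheaves whose underlying sheaf has all HN slopes $\ge 0$; the stacks $\Higgs_D^{\ge 0}(r,d)$ are of finite type, one defines semistability \emph{inside} $\cA_D^{\ge 0}$, proves (Proposition~\ref{P:1}) that for $d\ge l\binom r2$ the truncated semistables coincide with the honest ones, and then recovers all $(r,d)$ from the periodicity $\Om_D(r,d)=\Om_D(r,d+r)$. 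Without some such truncation your HN/$\Log$ inversion has no finite "all objects" side to start from. Second, your stratification by the "generic Jordan type of $\te$" is not available when $\deg D>2g-2$: a Higgs field $\te\colon E\to E(D)$ with $\deg D>0$ is in general not nilpotent and has no Jordan type. The paper gets around this by the Serre-duality identity $\sI^{\ge 0}_D(\ga)=\sI^{\ge 0}_{K-D}(\ga^*)$ (Proposition~\ref{P:Serre_duality}), which trades $D$ of degree $l>2g-2$ for $K-D$ of negative degree, where \emph{every} Higgs sheaf is nilpotent for slope reasons; only then does the Jordan stratification, the forgetful map to stacks of chains of epimorphisms, and the residue formula $H_\la$ (imported from \cite{schiffmann_indecomposable}) apply. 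Your plan has no substitute for this reduction.

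Two smaller points. The vanishing you need for the integration map to be multiplicative on the HN product is $\Ext^2(\oE,\oF)=0$ between semistables of increasing slope (Corollary~\ref{cr:sst zero}, a consequence of $\deg(K-D)\le 0$ and Serre duality in the two-dimensional category $\cA_D$), not a generic vanishing of $H^1(X,\End(E)(D))$ making strata affine bundles; the fibers of the Jordan-stratification map are computed exactly, for all objects, by an inductive extension count (Proposition~\ref{P:31}). And the factor $q$ in the $D=K$ case is not the content of Proposition~\ref{P:Serre_duality}: it comes from comparing $\sum_{r,d}\sI^{\ge 0}_K(r,d)\w^r\z^d=\Exp\bigl(q\sum\sA^{\ge 0}_{r,d}\w^r\z^d/(q-1)\bigr)$ with $\sum_{r,d}\sI^{\ge 0,\nil}_{0}(r,d)\w^r\z^d=\Exp\bigl(\sum\sA^{\ge 0}_{r,d}\w^r\z^d/(q-1)\bigr)$ (Theorem~\ref{th:Om_K} and Corollary~\ref{cor:K0}), i.e.\ from the difference between counting all of $\Hom(E,E\ts K)\iso\Ext^1(E,E)^*$ and counting only the nilpotent endomorphisms, per indecomposable summand of $E$. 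Your "twisted cotangent bundle" heuristic points in that direction but does not by itself produce the precise factor, nor part (i).
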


\vsp{.1in}

\begin{remark}
Conjecturally, the function $X_r(z)$ has a unique and simple pole at $z=1$, so that $\Om_D(r,d)=[(1-z)X_r(z)]_{z=1}$. It can be shown that $X_r(z)$ is regular outside of $\mu_r$ and has at most simple poles.
\end{remark}

\vsp{.1in}

Statement (i) is an analog of a result of the first author in the context of quivers, see \cite{mozgovoy_motivicb}.
Let us briefly comment on the proof of statement (ii), which is more involved. The standard technique to compute the volume (or DT-invariants) of
the moduli stack of semistable objects in a category, especially when --as in the present case-- there is no freedom of choice for the stability parameter, is to first compute the volume of the moduli stack of \textit{all} objects and then to use some form of Harder-Narasimhan recursion. This strategy can not work in the case of Higgs bundles as the moduli stack $\Higgs_D(r,d)$ is always
of \textit{infinite} volume. Let $\cA_D$ denote the category of $D$-twisted Higgs bundles on $X$. In order to introduce a suitable
truncation $\Higgs^{\geq 0}_D(r,d)$ of $\Higgs_D(r,d)$ we will first define a subcategory $\cA^{\geq 0}_D$ of $\cA_D$ and then consider the moduli stack of semistable objects \textit{in} $\cA_D^{\geq 0}$.
More precisely, let $\Coh^{\geq 0}(X)$ be the category of coherent sheaves over $X$ all of whose HN-factors have slopes $\geq 0$.
Let $\cA^{\geq 0}_D$ be the category of $D$-twisted Higgs sheaves $(E,\te)$ with $E\in \Coh^{\geq 0}(X)$.
It is easy to see that the corresponding moduli stacks $\Higgs^{\geq 0}_D(r,d)$ are of finite volume. We can define the notion of semistability for the objects in this category
and we can construct the moduli stacks $\Higgs^{\geq 0, \sst}_D(r,d)$ of semistable bundles. Note that $\Higgs^{\geq 0, \sst}_D(r,d)$ is \textit{not} a substack of $\Higgs^{\sst}_D(r,d)$, as not all objects in $\Higgs^{\geq 0, \sst}_D(r,d)$ may be semistable in the usual sense.
However, we will show that if $d\ge l\binom r2$ then
$\Higgs^{\sst}_D(r,d)=\Higgs^{\geq 0, \sst}_D(r,d)$. As $\Higgs^{\sst}_D(r,d)\iso\Higgs^{\sst}_D(r,d+r)$, it is thus enough to compute invariants of $\Higgs^{\geq 0,\sst}_D(r,d)$ in order to determine invariants of $\Higgs^{\sst}_D(r,d)$. In particular, we show that if $\deg D\ge 2g-2$
\note{It is important to require here that $\deg D\ge 2g-2$}
and $d\ge l\binom r2$ then
$\Om_D(r,d)=\Om_D^{\geq 0}(r,d)$, where the DT-invariants $\Om_D^{\geq 0}(r,d)$ are defined via (\ref{eq:Om00}) using $\Higgs^{\geq 0,\sst}_D(r,d)$ instead of $\Higgs^{\sst}_D(r,d)$. The volumes of the stacks $\Higgs^{\geq 0,\sst}_D(r,d)$ may be determined by the standard Harder-Narasimhan recursion from the volumes of the stacks $\Higgs^{\geq 0}_D(r,d)$.
Using the formula (see Proposition \ref{P:Serre_duality})
$$\Om^{\ge0}_D(r,d)=\Om^{\ge0}_{K-D}(r,d),$$
we may reduce the case $l > 2g-2$ to the case $l <0$, in which situation all $D$-twisted Higgs bundles are nilpotent.
We then consider a stratification by Jordan types and apply a variant of the method introduced in \cite{schiffmann_indecomposable} to compute the volumes of the stacks $\Higgs^{\geq 0}_D(r,d)$, yielding the formula
$$\sum_{r,d}\Om^{\geq 0}_{D}(r,d)\w^r\z^d
=(q-1)\Log\rbr{\sum_\la(-q^\oh)^{l\ang{\la,\la}}
J_\la(\z)H_\la(\z)\w^{\n\la}}.$$

\vsp{.1in}

The technique developed here is general enough that most of it may be applied to the moduli stacks of type $A$ (twisted) quiver sheaves, and we write the paper in this generality. We note however that, as the Euler form on the category of twisted quiver sheaves
is \textit{not} symmetric unless we are in type $\widehat{A}_{0}$ -- that is, in the Higgs case--, the machinery of Donaldson-Thomas invariants does not apply and we can not obtain as explicit formulas as in the Higgs case.

\vsp{.2in}

\subsection{Plethystic notation} Throughout the paper we will use the standard plethystic operators $\Exp$ and $\Log$, whose definitions we briefly recall here. Consider the space $\mathbb{Q}[[z,w]]$ of power series
in the variables $z,w$. For $l \geq 1$ we define the $l$th Adams operator $\psi_l$ as the $\mathbb{Q}$-algebra map
$$\psi_l:\mathbb{Q}[[z,w]] \to \mathbb{Q}[[z,w]], \qquad  \;z \mapsto z^l, \;w\mapsto w^l.$$
Set $\mathbb{Q}[[z,w]]^+=w\mathbb{Q}[[z,w]] + w \mathbb{Q}[[z,w]]$.
The plethystic exponential and logarithm functions are inverse maps
$$\Exp:\mathbb{Q}[[z,w]]^+ \longrightarrow 1+\mathbb{Q}[[z,w]]^+, \qquad \Log:1+\mathbb{Q}[[z,w]]^+ \longrightarrow \mathbb{Q}[[z,w]]^+$$
respectively defined by
 $$\Exp(f)=\exp \left(\sum_k \frac{1}{k}
\psi_k(f)\right), \qquad \Log(f)=\sum_{k\geq 1} \frac{\mu(k)}{k} \psi_k \left(\log(f)\right).$$
These operators satisfy the usual properties, i.e. $\Exp(f+g)=\Exp(f)\Exp(g)$ and $\Log(fg)=\Log(f)+\Log(g)$.
When taking the plethystic exponential or logarithm of an expression depending on a curve $X$ defined over a finite field $\mathbb{F}_q$
(or on its set of Weil numbers $\{\om_1,\ldots, \om_{2g}\}$) --such as the zeta function $Z_X(z)$ or the Kac polynomials $\sA_{X,r,d}$ for instance--, we understand that the Adams operator $\psi_l$ acts on $X$ by $\psi_l(X)=X \otimes_{\mathbb{F}_q}\mathbb{F}_{q^l}$ (and $\psi_l(\om_i)=\om_i^l$, $\psi_l(q)=q^l$).

\vsp{.2in}

\section{Twisted quiver sheaves}

\vsp{.1in}

\subsection{Definitions}
Let $X$ be a smooth, geometrically connected curve of genus $g$ over a field $\bk$ and let 
$D$ be a divisor on $X$ of degree $l$. 
Given $n\in\bN$, 
let $Q=(I, H)$ be the quiver of type $A_{n-1}^{(1)}$, i.e.\ 
let $I=\Z/n\Z$ be the set of vertices and $H=\sets{i \to i+1}{ i \in\Z/n\Z}$ be the set of arrows.
By definition, a $D$-twisted quiver sheaf (resp.~bundle) on $X$ is a tuple $\oE=( E_i, \theta_i)_{i \in I}$ where $E_i$ is a coherent sheaf (resp.~vector bundle) on $X$ and
$\theta_i \in \Hom(E_i, E_{i+1}(D))$.
As $D$ will be fixed throughout, we will often refer to such a data simply as a quiver sheaf (resp. bundle).

To simplify notation, let $\cA=\Coh(X)^I$ be the category of $I$-graded objects $E=(E_i)_{i\in I}$ in $\Coh(X)$ and consider the shift functor
$$T:\cA\to\cA,\qquad E=(E_i)_i\mto E[1]=(E_{i+1}(D))_i.$$
Then a quiver sheaf can be interpreted as a pair $\oE=(E,\te)$, where $E\in\cA$ and $\te:E\to E[1]$ is a morphism in $\cA$.
We denote by $\cA_D$ the category of quiver sheaves. It is an abelian category, with the obvious notion of morphism. Such categories have been studied by
Garcia-Prada, Gothen and collaborators, see, e.g. \cite{garcia-prada_motives}, \cite{gothen_homological}. Of particular importance are the \textit{Higgs case} ($n=1$) in which
one recovers the category of $D$-twisted (or meromorphic) Higgs sheaves, and the case $n=2$ which, for $\bk=\mathbb{C}$ and $D$ the canonical divisor $K_X$ of $X$ yields a category equivalent to the (collection of) categories of Higgs bundles for the \textit{real groups} $U(p,q)$, see \cite{gothen_real}.
Note also
that as any representation of a finite type $A$ quiver may trivially be regarded as a representation of a cyclic quiver, the categories of quiver sheaves considered here also contain the categories of quiver sheaves for finite type $A$ quivers (also known as 'chains', see \cite{garcia-prada_motives}).

\vsp{.1in}

For $L$ a line bundle on $X$ and $E\in\cA$, define $E\ts L=(E_i\ts L)_{i}$.
Similarly, for $\oE=(E,\te)\in\cA_D$, we define $\oE\ts L=(E\ts L,\te\ts L)$ and we use a similar notation for the operation of shifting by a divisor.
Similarly, we define $\oE[1]=(E[1],\te[1])$.

\vsp{.1in}

For a coherent sheaf $E \in \Coh(X)$, we define its \textit{class} to be the pair $\cl E=(\rk E, \deg E) \in \mathbb{Z}^2$.
The \textit{slope} of a sheaf is 
$$\mu(E)=\frac{\deg E}{\rk E} \in \mathbb{Q} \cup \{\infty\}.$$
Similarly, for $E=(E_i)_i\in\cA$, we define
\begin{gather}
\cl E=(\cl E_i)_i\in\Ga=(\bZ^2)^I=\bZ^I\oplus\bZ^I,\label{E:classdef}\\
\mu(E)=\frac{\deg E}{\rk E}\in\bQ\cup\set\infty,\qquad
\rk E=\sum\rk E_i,\qquad \deg E=\sum\deg E_i.
\label{E:slopequiversheaves}
\end{gather}
For any $\ga=(\ur,\ud)\in\Ga=\bZ^I\oplus\bZ^I$,
define 
$$\ga[1]=\rbr{r_{i+1},d_{i+1}+lr_{i+1}}_i\in\Ga.$$
Then $\cl(E)[1]=\cl(E[1])$ for any $E\in\cA$.

We extend this notation to quiver sheaves by setting
$\cl \oE=\cl E$ and $\mu(\oE)=\mu(E)$, for $\oE=(E,\te)\in\cA_D$.
We will write $\cA_D(\ur,\ud)$ for the subcategory of quiver sheaves $\oE$ of class $(\ur,\ud) \in (\Z^2)^I$.

\vsp{.1in}

For $E,F\in\Coh(X)$, we denote by $\chi(E,F)$ the Euler form
on the category $\Coh(X)$,
i.e.~we set
$$\chi(E,F)=\dim \Hom(E,F)-\dim \Ext^1(E,F).$$
By the Riemann-Roch formula,
$$\chi(E,F)=(1-g)\rk E\cdot \rk F + (\rk E\cdot \deg F-\rk F\cdot \deg E).$$
Since $\chi(E,F)$ only depends on $\cl E$ and $\cl F$ we will sometime denote this Euler form also by $\chi(\cl E, \cl F)$.
The same notation is used for the Euler form on the category $\cA$.

\vsp{.2in}

\subsection{Homological properties}
The categories $\Coh(X)$ and $\cA$ are of cohomological dimension $1$, while the category $\cA_D$ is of homological dimension $2$.
More precisely, we have

\begin{theorem}[cf.~Gothen-King {\cite{gothen_homological}}]
Given $\oE=(E,\te)$, $\oF=(F,\te')$ in $\cA_D$,
there is a long exact sequence
\begin{multline*}
0\to\Hom(\oE,\oF)\to\Hom(E,F)\to\Hom(E,F[1])\\
\to\Ext^1(\oE,\oF)\to\Ext^1(E,F)\to\Ext^1(E,F[1])
\to\Ext^2(\oE,\oF)\to0
\end{multline*}
and the groups $\Ext^i(\oE,\oF)$ vanish for $i>2$.
\end{theorem}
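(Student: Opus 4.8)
The plan is to construct the long exact sequence from a short exact sequence of complexes computing $\Ext$-groups. The key observation is that a quiver sheaf $\oE=(E,\te)$ with $\te:E\to E[1]$ can be encoded as a two-term complex, and morphisms/extensions in $\cA_D$ are governed by the hypercohomology of an associated complex. Concretely, given $\oE,\oF\in\cA_D$, I would consider the complex
\[
C^\bullet(\oE,\oF):\quad \Hom_{\cA}(E,F)\;\xrightarrow{\ d\ }\;\Hom_{\cA}(E,F[1]),
\]
placed in degrees $0$ and $1$, where the differential sends $\phi$ to $\te'\circ\phi-\phi[1]\circ\te$ (the failure of $\phi$ to commute with the Higgs fields). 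A morphism $\oE\to\oF$ in $\cA_D$ is exactly a morphism $E\to F$ in $\cA$ commuting with $\te,\te'$, i.e.\ $\Hom(\oE,\oF)=\ker d=H^0(C^\bullet)$. So the first step is to identify $\Ext^i_{\cA_D}(\oE,\oF)$ with the hypercohomology $\mathbb H^i$ of the total complex obtained from the $\Hom$-complex of injective (or appropriate) resolutions; since $\cA$ has cohomological dimension $1$, each of $\Hom_\cA(E,F)$ and $\Hom_\cA(E,F[1])$ sits in a resolution of length $2$ (degrees $0,1$), so the total complex lives in degrees $0,1,2$, immediately giving the vanishing $\Ext^i_{\cA_D}(\oE,\oF)=0$ for $i>2$.

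Next I would make the hypercohomology computation explicit by writing down the double complex: take an injective resolution $F\to I^\bullet$ in $\cA$ (length $2$), apply $\Hom_\cA(E,-)$ and $\Hom_\cA(E,-[1])$, and form the mapping cone of the morphism of complexes $\Hom_\cA(E,I^\bullet)\to\Hom_\cA(E,I^\bullet[1])$ induced by $d$. The cohomology of $\Hom_\cA(E,I^\bullet)$ is $\Ext^\bullet_\cA(E,F)$ and the cohomology of $\Hom_\cA(E,I^\bullet[1])$ is $\Ext^\bullet_\cA(E,F[1])$ (using that $[1]$ is exact on $\cA$ so it takes injectives to injectives, or at least preserves the relevant $\Ext$-computation). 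The long exact sequence of the mapping cone then reads
\[
\cdots\to\Ext^i_{\cA_D}(\oE,\oF)\to\Ext^i_{\cA}(E,F)\xrightarrow{d_*}\Ext^i_{\cA}(E,F[1])\to\Ext^{i+1}_{\cA_D}(\oE,\oF)\to\cdots,
\]
and since $\Ext^i_\cA$ vanishes for $i\geq 2$, this long exact sequence collapses to precisely the six-term sequence in the statement, with the boundary map $\Ext^1_\cA(E,F[1])\to\Ext^2_{\cA_D}(\oE,\oF)$ surjective because $\Ext^2_\cA(E,F)=0$, and $\Hom_{\cA_D}(\oE,\oF)\hookrightarrow\Hom_\cA(E,F)$ injective because $\Ext^{-1}$ terms vanish.

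Two points need care. First, one must verify that $\Ext^i_{\cA_D}(\oE,\oF)$ is genuinely computed by this cone construction; the cleanest route is to exhibit enough injectives (or projectives) in $\cA_D$ and compare resolutions, or alternatively to note that $\cA_D$ is the category of modules over a sheaf of algebras (the "twisted path algebra" $\mathcal O_X[\te]$ in the Higgs case, or its quiver analogue) and invoke a standard relative-$\Ext$ spectral sequence / Grothendieck spectral sequence for the forgetful functor $\cA_D\to\cA$ together with its adjoint. The functor $\oE\mapsto(\text{underlying }E)$ has an exact left adjoint "free quiver sheaf" $E\mapsto(E\oplus E[1]\oplus\cdots)$ — here finiteness is automatic since $[1]$ raises rank-degree in a controlled way isn't an issue for the bounded derived functors — and this yields the relative resolution from which the long exact sequence follows. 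Second, one must check that the maps in the sequence are the "obvious" ones (the forgetful maps $\Ext^i_{\cA_D}\to\Ext^i_\cA$ and the commutator map $d_*$); this is a diagram chase. The main obstacle, and the only genuinely non-formal input, is establishing that $\cA_D$ has enough injectives or projectives so that relative homological algebra applies cleanly — once that is in hand (it follows from the sheaf-of-algebras description together with the existence of enough injectives in $\Coh(X)^I$), everything else is the standard mapping-cone / two-term-complex formalism, exactly as in Gothen–King \cite{gothen_homological}.
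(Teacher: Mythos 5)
Your proposal is correct and follows essentially the same route as the cited Gothen--King argument, which is the proof the paper relies on (it gives none of its own): identify $\Ext^i_{\cA_D}(\oE,\oF)$ with the hypercohomology of the cone of $R\Hom_\cA(E,F)\to R\Hom_\cA(E,F[1])$, $\phi\mapsto \te'\circ\phi-\phi[1]\circ\te$, via the standard two-term resolution of the twisted path algebra, after which the seven-term sequence and the vanishing for $i>2$ drop out because $\cA$ has cohomological dimension $1$. The only slip is the parenthetical about the left adjoint $E\mapsto\bigoplus_{k\ge0}E[k]$: for the cyclic quiver this is an infinite direct sum and not coherent (one must work with quasi-coherent modules over the sheaf of algebras), but this side remark does not affect your main mapping-cone argument.
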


\vsp{.1in}

Let us denote by 
$$\chi_D(\oE, \oF)=\dim \Hom(\oE,\oF)-\dim \Ext^1(\oE,\oF) + \dim \Ext^2(\oE,\oF)$$
the Euler form in $\cA_D$.

\vsp{.1in}

\begin{corollary}\label{cr:chi}
For any $\oE,\oF\in\cA_D$, we have
\begin{enumerate}
	\item $\hi_D(\oE,\oF)
	=\hi(E,F)-\hi(E,F[1])
	=\sum_i \left( \hi(E_i,F_i)-\hi(E_i,F_{i+1}(D))\right)$.
	\item if $n=1$ then $\hi_D(\oE,\oF)=\hi_D(\oF,\oE)=-l\rk E\cdot \rk F$.
\end{enumerate}
\end{corollary}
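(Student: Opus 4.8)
The plan is to deduce Corollary~\ref{cr:chi} directly from the long exact sequence in the Gothen--King theorem together with the Riemann--Roch formula for $\chi$ on $\Coh(X)$ and $\cA$. First I would prove (i). Since the Euler characteristic is additive along exact sequences, the six-term exact sequence
\[
0\to\Hom(\oE,\oF)\to\Hom(E,F)\to\Hom(E,F[1])\to\Ext^1(\oE,\oF)\to\Ext^1(E,F)\to\Ext^1(E,F[1])\to\Ext^2(\oE,\oF)\to0
\]
gives, taking the alternating sum of dimensions,
\[
\hi_D(\oE,\oF)-\hi(E,F)+\hi(E,F[1])=0,
\]
which is exactly the first equality. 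For the second equality I would use that $\Hom$ and $\Ext^1$ in $\cA=\Coh(X)^I$ decompose as direct sums over the vertices, so $\hi(E,F)=\sum_i\hi(E_i,F_i)$, while by definition of the shift $T$ one has $F[1]=(F_{i+1}(D))_i$, hence $\hi(E,F[1])=\sum_i\hi(E_i,F_{i+1}(D))$. Subtracting yields the stated formula.

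Next I would prove (ii), the Higgs case $n=1$. Here $I=\Z/\Z$ is a single vertex, so $E=E_0$, $F=F_0$ are honest coherent sheaves and $F[1]=F(D)$. By (i),
\[
\hi_D(\oE,\oF)=\hi(E,F)-\hi(E,F(D)).
\]
Now I would plug in the Riemann--Roch expression $\hi(E,F)=(1-g)\rk E\,\rk F+(\rk E\,\deg F-\rk F\,\deg E)$ and the analogous one with $F$ replaced by $F(D)$, using $\rk F(D)=\rk F$ and $\deg F(D)=\deg F+l\rk F$. The $(1-g)\rk E\,\rk F$ terms cancel, the $\rk E\,\deg F$ terms cancel, the $\rk F\,\deg E$ terms cancel, and what survives is $-\rk E\cdot(l\rk F)=-l\rk E\,\rk F$. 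The expression $-l\rk E\,\rk F$ is manifestly symmetric in $\oE$ and $\oF$, which gives $\hi_D(\oE,\oF)=\hi_D(\oF,\oE)$ as well.

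This corollary is essentially a bookkeeping consequence of the theorem quoted just above it, so I do not expect a genuine obstacle; the only point requiring a little care is checking that $\Hom$ and $\Ext^1$ in $\cA$ really are computed componentwise over $I$ (so that no contribution linking different vertices appears) and that the degree shift $\deg F(D)=\deg F+l\rk F$ is used consistently. Once those are in place, both parts follow from additivity of $\chi$ and a one-line Riemann--Roch cancellation.
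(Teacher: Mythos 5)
Your proof is correct and follows exactly the route the paper intends: the corollary is stated as an immediate consequence of the Gothen--King long exact sequence (alternating sum of dimensions), the componentwise description of $\Hom$ and $\Ext^1$ in $\cA=\Coh(X)^I$, and Riemann--Roch for part (ii). All the cancellations you describe check out, so nothing further is needed.
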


Observe that the Euler form $\chi$ on $\cA_D$ is symmetric only in the case of Higgs sheaves (i.e. for $n=1$).
Applying Serre duality for coherent sheaves, we obtain the following form of Serre 
duality for quiver sheaves.

\begin{corollary}\label{C:SD}
For any $\oE,\oF\in\cA_D$, we have
\[\Ext^i(\oE,\oF)\iso\Ext^{2-i}(\oF,\oE[-1](K_X))^*.\]
\end{corollary}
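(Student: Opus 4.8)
The plan is to deduce Serre duality for quiver sheaves (Corollary~\ref{C:SD}) directly from the long exact sequence of the Gothen--King theorem together with Serre duality on the curve $X$. First I would recall that $\Coh(X)$ has cohomological dimension $1$ and satisfies Serre duality in the form $\Ext^i(E,F)\iso\Ext^{1-i}(F,E\ts K_X)^*$ for $i=0,1$; applying the shift functor $T$ and chasing definitions, this also gives a ``graded'' version $\Ext^i(E,F[1])\iso\Ext^{1-i}(F,E[-1](K_X))^*$ in $\cA=\Coh(X)^I$, since $[1]$ only permutes the $I$-grading and twists by $D$, operations which commute with $\ts K_X$ and are compatible with the $\Ext$-pairing. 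So the two ``input'' dualities I have available are
\[
\Ext^i(E,F)\iso\Ext^{1-i}(F,E(K_X))^*,\qquad
\Ext^i(E,F[1])\iso\Ext^{1-i}(F[1],E(K_X))^*\iso\Ext^{1-i}(F,E[-1](K_X))^*.
\]

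Next I would write down the Gothen--King long exact sequence for the pair $(\oE,\oF)$ and, in parallel, the one for the pair $(\oF,\oE[-1](K_X))$. Dualizing the second sequence (taking $\bk$-linear duals is exact) and applying the two curve-level dualities above to each of its ``$\cA$-level'' terms $\Hom(F,E[-1](K_X))$, $\Hom(F,E(K_X))$, $\Ext^1(F,E[-1](K_X))$, $\Ext^1(F,E(K_X))$, I expect to obtain an exact sequence with exactly the same $\cA$-level entries as the first sequence, but with $\Ext^j(\oE,\oF)$ replaced by $\Ext^{2-j}(\oF,\oE[-1](K_X))^*$ in the three ``$\cA_D$-level'' slots. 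The key point is then to check that the connecting and inclusion maps in the dualized sequence agree (up to the natural isomorphisms, and at worst a sign) with those in the first sequence — i.e.\ that the duality isomorphisms on the curve are natural enough to intertwine the maps induced by $\te$ and $\te'$. Granting this, a five-lemma / diagram-chase argument comparing the two long exact sequences term by term yields $\Ext^i(\oE,\oF)\iso\Ext^{2-i}(\oF,\oE[-1](K_X))^*$ for all $i$; the vanishing for $i\notin\{0,1,2\}$ on both sides makes the remaining cases trivial.

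Alternatively — and this may be the cleaner route to present — one can phrase $\cA_D$ as the category of modules over a sheaf of algebras, or as representations of $T$ in the sense of ``$\te:E\to E[1]$'', and observe that the functor $T$ has $T^{-1}=(E_i)_i\mapsto(E_{i-1}(-D))_i$ as an inverse, with $T$ and the Serre functor $S=(-)\ts K_X[1]$ of the ambient derived category $D^b(\cA)$ interacting in a controlled way. From this one gets a Serre functor on $D^b(\cA_D)$ of the form $\oE\mapsto \oE[-1](K_X)[2]$ (shift by $2$ because $\cA_D$ has homological dimension $2$), which is precisely the content of the corollary. Either way, the main obstacle I anticipate is \emph{naturality}: making sure the isomorphism produced by dualizing and re-identifying terms is compatible with the morphisms $\te$, $\te'$ defining the quiver-sheaf structures, so that the comparison of long exact sequences is legitimate and not merely an equality of dimensions. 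Once naturality of Serre duality on $\Coh(X)$ (and its behaviour under the twist/shift functor) is pinned down, the rest is a formal diagram chase.
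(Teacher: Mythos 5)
Your first argument---dualizing the Gothen--King long exact sequence for the pair $(\oF,\oE[-1](K_X))$, identifying its four $\cA$-level terms with those of the sequence for $(\oE,\oF)$ via Serre duality on $\Coh(X)^I$ (using $(E[-1](K_X))[1]=E(K_X)$), and reading off the outer terms as kernel/cokernel and the middle term by dimension count---is exactly how the paper derives the corollary, which it states as an immediate consequence of the Gothen--King theorem and Serre duality for coherent sheaves. The naturality point you flag is the only real content, and it holds by the standard functoriality of Serre duality under twisting and the grading shift, so your proof is correct and follows the paper's route.
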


\note{Using notation $E[1]=(E_{i+1}(D))_i$ defined later, one can write $(\om\cdot \oE)(K_X-D)=\oE[-1](K_X)$.}

\vsp{.1in}

Recall that a coherent sheaf $E \in \Coh(X)$ is called semistable (resp. stable) if for any proper subsheaf $F \subset E$ we have $\mu(F) \leq \mu(E)$
(resp. $\mu(F) < \mu(E)$). The Harder-Narasimhan (HN for short) filtration of $E$ is the unique filtration
$$E=E_1\sps E_2\sps\dots\sps E_s\sps E_{s+1}=0$$
such that $E_i/E_{i+1}$ are semistable and 
$$\mu(E_1/E_2) < \mu(E_2/E_3) < \dots < \mu(E_s).$$
We set
\begin{gather*}
\si(E)=\set{\mu(E_1/E_2), \mu(E_2/E_3), \ldots, \mu(E_s)}\sbs\bQ\cup\set\infty,\\
\mu_{\min}(E)=\min\si(E)=\mu(E_1/E_2),\qquad
\mu_{\max}(E)=\max\si(E)=\mu(E_s).
\end{gather*}
In the same way we define semistable objects in \cA using the slope function (\ref{E:slopequiversheaves}).
An object $E=(E_i)_i\in\cA$ is semistable if and only if all $E_i$ are semistable and have equal slope.
We similarly define semistable objects in $\cA_D$ using the slope function (\ref{E:slopequiversheaves}),
i.e.\ we say that $\oE$ is semistable if for any quiver subsheaf $\oF \subset \oE$ we have $\mu(\oF) \leq \mu(\oE)$. We further say that $\oE$ is stable if the inequality is strict for any proper quiver subsheaf $\oF$. For $\nu \in \mathbb{Q} \cup \{\infty\}$ let us denote by $\cA_D^{(\nu)}$ the full subcategory of $\cA_D$ whose objects are the semistable quiver sheaves of slope $\nu$, and $\cA^{\sst}_D(\ur,\ud)$ for the full subcategory of $\cA_D(\ur,\ud)$ whose objects are semistable.

\begin{remark}
If $l \leq 0$ then a quiver sheaf $\oE=(E,\te)$ is semistable if and only if $E\in\cA$ is semistable; indeed, if $E$ is not semistable then the last term $E_s$ in its HN-filtration
satisfies $\te(E_{s})\sbs E_s[1]$ for slope reasons, and thus $(E_s,\theta|_{E_s})$
is automatically a (destabilizing) quiver subsheaf of $\oE$. This shows that the notion of semistable quiver sheaf is only interesting when $l >0$.
\end{remark}

We summarize the standard properties of $\cA_D$ with respect to the above semistability notion in the following Proposition, whose proof is left to the reader:

\vsp{.1in}

\begin{proposition} The following hold:
\begin{enumerate}
\item For any $\nu$, $\cA_D^{(\nu)}$ is an abelian subcategory of $\cA_D$ which is stable under extensions and direct summands.
\item For any line bundle $L$ on $X$, twisting by $L$ defines an equivalence $\cA_D^{(\nu)} \simeq
\cA_D^{(\nu + \deg(L))}$, 
\item If $\nu > \gamma$ then $\Hom(\cA_D^{(\nu)}, \cA_D^{(\gamma)})=0$,
\item Any quiver sheaf $\oE$ carries a unique filtration 
$$\oE=\oE_1\sps \oE_2\sps \oE_3\sps\cdots\sps E_s\sps E_{s+1}=0$$
whose factors $\oE_i / \oE_{i+1}$ are semistable and such that
$$\mu(\oE_1 / \oE_{2}) < \mu(\oE_2 / \oE_3) < \cdots.$$ 
\end{enumerate}
\end{proposition}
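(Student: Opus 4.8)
The plan is to check that the pair $(\rk,\deg)$ endows the abelian category $\cA_D$ with a slope stability structure, and then to read off (i)--(iv) from the standard Harder--Narasimhan formalism. First, $\cA_D$ is Noetherian: an ascending chain of quiver subsheaves of $\oE=(E,\te)$ restricts, for each $i\in I$, to an ascending chain of subsheaves of $E_i$, which stabilizes since $\Coh(X)$ is Noetherian and $I$ is finite. Next, $\rk$ and $\deg$ are additive on short exact sequences, $\rk\oE\ge0$ for all $\oE$, and $\rk\oE=0$ forces $\deg\oE>0$ when $\oE\ne0$ (each $E_i$ is a coherent sheaf on the curve $X$). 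From these facts one obtains the see-saw property: for a short exact sequence $0\to\oF\to\oE\to\oG\to0$ of nonzero objects exactly one of $\mu(\oF)<\mu(\oE)<\mu(\oG)$, $\mu(\oF)=\mu(\oE)=\mu(\oG)$, $\mu(\oF)>\mu(\oE)>\mu(\oG)$ holds; the cases with $\rk\oF=0$ or $\rk\oG=0$ (so the corresponding slope is $\infty$) are checked directly and cause no trouble. In particular, a subobject of a semistable object of slope $\nu$ has slope $\le\nu$ and a quotient has slope $\ge\nu$.

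Statement (iii) then follows at once: for $\phi\colon\oE\to\oF$ with $\oE\in\cA_D^{(\nu)}$, $\oF\in\cA_D^{(\gamma)}$ and $\nu>\gamma$, the object $\im\phi$ is a quotient of $\oE$, hence of slope $\ge\nu$, and a subobject of $\oF$, hence of slope $\le\gamma<\nu$, so $\im\phi=0$. For (i): given $0\to\oE'\to\oE\to\oE''\to0$ with $\oE',\oE''$ semistable of slope $\nu$ and $\oF\sbs\oE$, apply the see-saw property to $0\to\oF\cap\oE'\to\oF\to\oF/(\oF\cap\oE')\to0$, using $\mu(\oF\cap\oE')\le\nu$ (a subobject of $\oE'$) and $\mu(\oF/(\oF\cap\oE'))\le\nu$ (a subobject of $\oE''$), to conclude $\mu(\oF)\le\nu$; so $\oE$ is semistable of slope $\nu$, and $\cA_D^{(\nu)}$ is closed under extensions, and also under direct summands since a summand of a semistable object of slope $\nu$ is both a subobject and a quotient of it. Given a morphism $\phi$ in $\cA_D^{(\nu)}$, the object $\im\phi$ is simultaneously a quotient of $\oE$ and a subobject of $\oF$, so it has slope exactly $\nu$ and all its subobjects have slope $\le\nu$; hence $\im\phi\in\cA_D^{(\nu)}$, and then $\mu(\ker\phi)=\mu(\coker\phi)=\nu$ by see-saw, while subobjects of $\ker\phi$ (resp.\ quotients of $\coker\phi$) are subobjects of $\oE$ (resp.\ quotients of $\oF$); so $\ker\phi,\coker\phi\in\cA_D^{(\nu)}$ and $\cA_D^{(\nu)}$ is an exact abelian subcategory of $\cA_D$.

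For (ii), the functor $-\ts L$ is an exact autoequivalence of $\cA_D$ with inverse $-\ts L\inv$; by Riemann--Roch $\rk(\oE\ts L)=\rk\oE$ and $\deg(\oE\ts L)=\deg\oE+\rk\oE\cdot\deg L$, so $\mu(\oE\ts L)=\mu(\oE)+\deg L$ (with $\infty$ preserved). Being an equivalence, $-\ts L$ induces a bijection between subobjects of $\oE$ and subobjects of $\oE\ts L$ compatible with the slope shift by $\deg L$; hence it sends semistable objects of slope $\nu$ to semistable objects of slope $\nu+\deg L$, giving $\cA_D^{(\nu)}\iso\cA_D^{(\nu+\deg L)}$.

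Finally, for (iv) one runs the usual construction. Degrees of subsheaves of a fixed coherent sheaf on a smooth projective curve are bounded above, so summing over $i\in I$ the set $\sets{\mu(\oF)}{0\ne\oF\sbs\oE}$ is bounded above; as the denominators $\rk\oF$ lie in $\{1,\dots,\rk\oE\}$ (or $\oF$ is torsion, of slope $\infty$), the supremum $\mu_{\max}(\oE)$ is attained. By the see-saw property and Noetherianity the sum $\oE^{\max}$ of all subobjects of slope $\mu_{\max}(\oE)$ again has slope $\mu_{\max}(\oE)$, is semistable, and is the unique maximal subobject of that slope; moreover $\mu_{\max}(\oE/\oE^{\max})<\mu_{\max}(\oE)$, once more by see-saw. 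Iterating (the process terminating since $\cA_D$ is Noetherian) produces the filtration of (iv), and uniqueness follows since in any such filtration the smallest nonzero term must contain every subobject of slope $\mu_{\max}(\oE)$ --- otherwise the semistable piece $\oE^{\max}$ would map nontrivially onto a lower-slope quotient --- hence equals $\oE^{\max}$, after which one induces on $\oE/\oE^{\max}$. The only points requiring care are the see-saw verifications involving torsion ($\mu=\infty$) terms and the boundedness of degrees of subsheaves of sheaves on $X$, both being routine reductions to standard facts about $\Coh(X)$; this is presumably why the authors leave the proof to the reader.
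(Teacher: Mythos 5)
Your proof is correct and is precisely the standard Harder--Narasimhan formalism (Noetherianity, additivity of $(\rk,\deg)$, see-saw, maximal destabilizing subobject) that the authors had in mind when leaving the proof of this Proposition to the reader; the edge cases you flag (torsion subobjects of slope $\infty$, boundedness of degrees of subsheaves) are handled appropriately. Nothing to add.
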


\vsp{.1in}

The following result will be crucial for our purposes.

\vsp{.1in}

\begin{corollary}\label{cr:sst zero}
Assume that $l \ge 2g-2$ and $\oE,\oF\in\cA_D$ are semistable objects such that $\mu(\oE)<\mu(\oF)$.
Then $\Ext^2(\oE,\oF)=0$.
\end{corollary}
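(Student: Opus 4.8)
The plan is to use Serre duality for quiver sheaves (Corollary~\ref{C:SD}) to convert the vanishing of $\Ext^2(\oE,\oF)$ into the vanishing of a $\Hom$-space, and then to verify that the latter vanishes by a slope comparison using the stability hypotheses. Concretely, Corollary~\ref{C:SD} with $i=2$ gives
\[
\Ext^2(\oE,\oF)\iso\Hom\bigl(\oF,\oE[-1](K_X)\bigr)^*,
\]
so it suffices to show $\Hom(\oF,\oE[-1](K_X))=0$. The object $\oE[-1](K_X)$ is again a $D$-twisted quiver sheaf (its underlying graded sheaf is $(E_{i-1}(K_X-D))_i$, up to the cyclic relabelling), and twisting/shifting changes the slope in a controlled way: passing from $\oE$ to $\oE(K_X)$ adds $\deg K_X=2g-2$ to the slope, while the shift $[-1]$ subtracts $l=\deg D$. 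Hence $\mu(\oE[-1](K_X))=\mu(\oE)+(2g-2)-l\le\mu(\oE)$, using exactly the hypothesis $l\ge 2g-2$.

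The key steps, in order, are: first, record that $\oE[-1](K_X)$ is obtained from $\oE$ by the twist-and-shift operations already introduced in Section~2, and that both operations preserve semistability (twisting by a line bundle is an equivalence $\cA_D^{(\nu)}\simeq\cA_D^{(\nu+\deg L)}$ by the Proposition, and the shift $[1]$ is likewise an auto-equivalence shifting slopes by $l$); second, compute the slope of the shifted-twisted object as $\mu(\oE)+2g-2-l$; third, invoke the hypothesis $l\ge 2g-2$ to conclude $\mu(\oE[-1](K_X))\le\mu(\oE)<\mu(\oF)$; fourth, apply part (iii) of the Proposition (no homomorphisms from a semistable object of strictly larger slope to one of smaller slope), which gives $\Hom(\oF,\oE[-1](K_X))=0$; and finally, feed this back through the Serre duality isomorphism to obtain $\Ext^2(\oE,\oF)=0$.

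I do not expect a serious obstacle here: the only point requiring a little care is the bookkeeping of how the slope transforms under $[-1]$ and $(K_X)$ — in particular getting the sign of $l$ right so that the inequality $l\ge 2g-2$ is used in the correct direction — and the mild observation that semistability is preserved by these auto-equivalences, so that part (iii) of the Proposition genuinely applies to $\oF$ and $\oE[-1](K_X)$. Everything else is a direct chaining of Corollary~\ref{C:SD} with the formal properties of semistable objects collected in the Proposition. One should also note that the strictness $\mu(\oE)<\mu(\oF)$ is exactly what is needed to get the $\Hom$-vanishing even when $l=2g-2$ makes $\mu(\oE[-1](K_X))=\mu(\oE)$; if $l>2g-2$ the slope inequality is already strict on its own.
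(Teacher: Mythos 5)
Your argument is correct and is essentially identical to the paper's proof: both reduce $\Ext^2(\oE,\oF)=0$ via the Serre duality of Corollary~\ref{C:SD} to the vanishing of $\Hom(\oF,\oE[-1](K_X))$, and obtain that vanishing from the slope inequality $\mu(\oE[-1](K_X))=\mu(\oE)+(2g-2)-l\le\mu(\oE)<\mu(\oF)$ together with semistability. Your bookkeeping of the slope shift and the role of the hypothesis $l\ge 2g-2$ matches the paper exactly.
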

\begin{proof}
By our assumption $\deg(K-D)\le0$.
Therefore
\[\mu(\oE[-1](K))\le\mu(\oE)<\mu(\oF).\]
By the semistability of $\oE[-1](K)$ and $\oF$ we conclude that
\[\Hom(\oF,\oE[-1](K))=0.\]
By the Serre duality of Corollary~\ref{C:SD}, this implies that $\Ext^2(\oE,\oF)=0$.
\end{proof}

\vsp{.2in}

\subsection{Positive quiver sheaves}
In this paragraph we introduce a suitable truncation $\cA_D^{\geq 0}$ of $\cA_D$ and prove that, for large slopes, 
$\cA_D$ and $\cA_D^{\geq 0}$ have the same semistable objects. 

\vsp{.1in}

We denote by $\Coh^{\geq 0}(X)$ the full subcategory of $\Coh(X)$ whose objects verify $\mu_{\min}(E) \geq 0$. The subcategory
$\Coh^{\geq 0}(X)$ is closed under extensions and quotients, but not under taking subobjects.
Similarly, we define $\cA^{\ge0}\sbs\cA$ to be the subcategory of \cA whose objects $E\in\cA$ satisfy $\mu_{\min}(E)\ge0$.
We define $\cA_D^{\geq 0}$ as  the full subcategory of $\cA_D$ whose objects $\oE=(E,\te)$ verify $E\in\cA^{\ge0}$.
Obviously, if $\oE \in \cA_D^{\geq 0}$ then $\mu(\oE) \geq 0$. 

We will say that an object $\oE \in \cA_D^{\geq 0}$ is  \textit{semistable in} $\cA_D^{\geq 0}$ if, for any $\oF \subset \oE$ in $\cA_D^{\geq 0}$,
we have $\mu(\oF) \leq \mu(\oE)$.
We define the notion of a stable object of $\cA_D^{\geq 0}$ accordingly, replacing $\leq$ by $<$. Observe that a semistable object in $\cA_D^{\geq 0}$ may be unstable
in the usual sense, but the converse is false: an object of $\cA_D^{\geq 0}$ which is semistable in the usual sense is also semistable
in $\cA_D^{\geq 0}$.
We denote by $\cA_D^{\geq 0, (\nu)}$ the subcategory of quiver sheaves in $\cA_D^{\geq 0}$ of slope $\nu$ which are semistable in $\cA_D^{\geq 0}$.
Therefore we have an inclusion of subcategories
$$\cA_D^{\geq 0} \cap \cA_D^{(\nu)} \subseteq \cA_D^{\geq 0, (\nu)}$$
which is strict in general. Note, however, that it is an equality if $l\leq 0$. The full subcategory $\cA^{\geq 0, \sst}_{D}(\ur,\ud)$ of $\cA^{\geq 0}_{D}(\ur,\ud)$ is defined in the same way.

\vsp{.1in}

\begin{proposition}\label{P:1}
For any $r\ge1$ and $\nu\ge l\frac{r-1}2$, we have
$$\rbr{\oE \in \cA_D^{\geq 0,(\nu)},\,\rk(\oE)=r}
\implies
\rbr{\oE\in\cA_D^{(\nu)}}.$$
\end{proposition}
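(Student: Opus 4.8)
The plan is to prove the contrapositive: suppose $\oE \in \cA_D^{\geq 0}$ has rank $r$ and is \emph{not} semistable in the usual sense, and show it cannot be semistable in $\cA_D^{\geq 0}$ once $\nu = \mu(\oE) \geq l\frac{r-1}{2}$. Since $\oE$ is not semistable, it admits a destabilizing quiver subsheaf $\oF = (F,\te|_F) \subset \oE$ with $\mu(\oF) > \mu(\oE) = \nu$; we may take $\oF$ to be the maximal destabilizing subobject, so $\oF$ is itself semistable of slope $\mu_{\max}(\oE) =: \mu(\oF)$. The obstruction to concluding immediately is that $\oF$ need not lie in $\cA_D^{\geq 0}$, i.e.\ the underlying sheaf $F$ may have HN-factors of negative slope, so $\oF$ is not an admissible test object for semistability in $\cA_D^{\geq 0}$.

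The key step is therefore to produce, from $\oF$, a quiver subsheaf $\oF' \subseteq \oE$ that \emph{does} lie in $\cA_D^{\geq 0}$ and still destabilizes $\oE$. First I would observe that $\mu_{\min}(F) < 0$ is impossible when $\mu(\oF)$ is large: the slope of $F$ equals $\mu(\oF) > \nu \geq l\frac{r-1}{2} \geq 0$ (using $l \geq 0$, which we may assume since for $l \leq 0$ the claimed implication is automatic by the Remark that the two notions of semistability coincide), and more is needed — I would run a short argument on the HN-filtration of the underlying sheaf. Concretely: let $F = F_1 \supset F_2 \supset \cdots \supset F_{s+1} = 0$ be the HN-filtration of $F$, with $\mu_{\min}(F) = \mu(F_s/F_{s+1})$. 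If $\mu_{\min}(F) \geq 0$ then $\oF \in \cA_D^{\geq 0}$ and we are done. Otherwise, I want to pass to a sub-quiver-sheaf supported on a high-slope piece of $F$; since $\te$ raises degrees by $l$, the piece $F_{\geq k}$ of the HN-filtration with all slopes $> \mu_{\min}(F) + l$ is preserved by $\te$ (for slope reasons, $\te$ cannot map a semistable sheaf of slope $\mu$ nontrivially into one of slope $< \mu - l$ inside $E$... ) — this is the same mechanism as in the Remark. The precise numerics: setting $\oF' = (F_{\geq k}, \te|)$ where $k$ is chosen so that $\mu(F_{k-1}/F_k) \leq 0 < \mu(F_k/F_{k+1})$ when such $k$ exists, one shows $\oF'$ is a quiver subsheaf lying in $\cA_D^{\geq 0}$, and then bounds $\mu(\oF')$ from below: since we only threw away HN-pieces of non-positive slope, $\deg F' \geq \deg F$ while $\rk F' \leq \rk F < r$, and a slope-and-rank estimate using $\mu(F) > \nu \geq l\frac{r-1}{2}$ forces $\mu(\oF') > \nu = \mu(\oE)$, contradicting semistability of $\oE$ in $\cA_D^{\geq 0}$.

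The main obstacle I expect is making the bookkeeping in this last estimate tight enough to use exactly the hypothesis $\nu \geq l\frac{r-1}{2}$ and no more. The point is that removing a low-slope HN-piece of rank $\rho$ and slope $\mu_0 \leq 0$ changes the slope of the quiver subsheaf from $\mu(\oF) = \frac{\deg F}{\rk F}$ to $\frac{\deg F - \rho\mu_0}{\rk F - \rho} \geq \frac{\deg F}{\rk F - \rho} > \mu(\oF) > \nu$, so actually each removal only \emph{increases} the slope; the subtlety is instead in verifying that the resulting $F_{\geq k}$ is genuinely $\te$-stable, for which the gap between consecutive HN-slopes must exceed $l$ — and this is where the constraint ties back to $\nu \geq l\frac{r-1}{2}$: if all HN-slopes of $F$ lay within an interval of length $\leq l$, one argues directly that $\mu_{\min}(F) \geq \mu(F) - l\frac{\rk F - 1}{\rk F}\cdot(\text{something}) \geq \mu(\oF) - l\frac{r-1}{2}$... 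I would need to set up the inequality $\mu_{\min}(F) \geq \mu(F) - l\frac{\rk F - 1}{2} \cdot \frac{?}{?}$ carefully, comparing $\sum_i \rk(F_i/F_{i+1})(\mu_i - \mu_{\min})$ against $l$-multiples of rank differences, and the bound $\mu(F) = \mu(\oF) > \nu \geq l\frac{r-1}{2}$ then gives $\mu_{\min}(F) \geq 0$ outright, so no removal is needed and $\oF$ itself is the desired destabilizer in $\cA_D^{\geq 0}$. This cleaner route — showing directly that $\mu_{\max}(\oE) \geq \nu$ already forces $\mu_{\min}(F) \geq 0$ for the maximal destabilizing $\oF$ via a spread-of-slopes bound controlled by $l$ and $r$ — is the one I would pursue, with the HN-truncation argument as a fallback if the inequality does not close on the nose.
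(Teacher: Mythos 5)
Your primary route---take the maximal destabilizing quiver subsheaf $\oF=(F,\te|_F)$, which is semistable in $\cA_D$, note that semistability of $\oF$ forbids gaps of length $>l$ between consecutive HN-slopes of $F$ (the slope-reasons mechanism you cite), and deduce from $\mu(F)>l\frac{\rk F-1}{2}$ that $\mu_{\min}(F)\ge 0$, so $\oF$ already destabilizes inside $\cA_D^{\geq 0}$---is exactly the paper's proof (Lemmas~\ref{L:1} and~\ref{L:2}), and the inequality you leave with a placeholder does close on the nose: $\mu_{\min}(F)\ge \mu(F)-l\frac{\rk F-1}{2}>0$, so the HN-truncation fallback is never needed.
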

\begin{proof} We may assume that $l >0$. We begin with the following observation.

\begin{lemma}\label{L:1}
Let $E\in\cA$ and assume that
$\si(E)=\set{\nu_1<\dots<\nu_s}$ has a gap $\nu_{k+1}-\nu_{k}>l$.
Then there exists no $\te\in\Hom(E,E[1])$
such that $\oE=(E,\te)\in\cA_D$ is semistable.
\end{lemma}
\begin{proof}
Let $\te\in\Hom(E,E[1])$ and $\oE=(E,\te)$.
There exists a (unique) short exact sequence
$$0\to E'\to E\to E''\to 0$$
in \cA with $\mu_{\min}(E')\geq \nu_{k+1}$ and $ \mu_{\max}(E'')\le \nu_k$.
Since
$\mu_{\min}(E')>\mu_{\max}(E'')+l$
we deduce that $\Hom(E',E''[1])=0$ and thus
$\te(E')\sbs E'[1]$.
But then $\oE'=(E',\te)$ is a destabilizing subobject of~$\oE$.
\end{proof}

\begin{lemma}\label{L:2}
Let $\oE=(E,\te)\in\cA_D$ be semistable.
Assume that $d \geq l \binom{r}{2}$, where
$d=\deg(E)$ and $r=\rk(E)$.
Then $E\in\cA^{\ge0}$.
\end{lemma} 
\begin{proof}
Let us write 
$\si(E)=\set{\nu_1<\nu_2<\ldots<\nu_s}$
and let $r_k,d_k$ be the rank and degree of the $k$-th factor of the HN filtration of $E$, so that
$$\sum_k d_k =d, \qquad \sum_k r_k=r, \qquad \nu_k=\frac{d_k}{r_k}.$$
By Lemma~\ref{L:1} we have
$\nu_{k+1}-\nu_{k} \leq l$ hence $r_k\ge1$ for all $k$,  $\nu_k \leq \nu_1 + (k-1) l$ and
$$l\frac{(r-1)}2\leq\frac{d}{r}
=\frac{\sum \nu_k r_k}{r}
\le\frac{\sum_{k}(\nu_1+(k-1)l)r_k}{r}
\le \nu_1+l\frac{(r-1)}{2}
$$
which implies that $\nu_1 \geq 0$, hence $E\in\cA^{\ge 0}$.
We used here the fact that if $\sum_k r_k =r$
and $r_k\ge1$ for all $k$ then $\sum_k (k-1)r_k \leq \binom{r}{2}$.  
\end{proof}

\vsp{.1in}

We may now finish the proof of Proposition~\ref{P:1}.
Let $\oE=(E,\te)\in\cA_D^{\ge0}$ be an object of rank $r$ and slope $\mu(E) \ge l \frac{r-1}{2}$, semistable in $\cA_D^{\ge0}$.
Assume that $\oE$ is not semistable in the usual sense.
Then $\oE$ has a destabilizing subobject $\oF=(F,\te)\in\cA_D$ of rank $r'\leq r$.
Therefore
$$\mu(\oF) > \mu(\oE) \geq l\frac{r-1}{2} \geq l \frac{r'-1}{2}.$$
By Lemma~\ref{L:2}, $F\in\cA^{\ge0}$.
But then $\oF$ is a destabilizing subobject of $\oE$
\textit{in} $\cA_D^{\geq 0}$, contradicting the assumption on $\oE$. Proposition~\ref{P:1} is proved.
\end{proof}

\vsp{.2in}

\subsection{Notations for stacks}
\label{sec:notation stacks}
Let us denote by $\sCoh(r,d)$ the stack of coherent sheaves of rank $r$ and degree $d$ on $X$. This stack is locally of finite type and of finite volume. Let $\sCoh^{\geq 0}(r,d)$ be the substack parametrizing positive coherent sheaves. This open substack is of finite type.
Similarly, given $(\ur,\ud)\in(\bZ^2)^I=\bZ^I\oplus\bZ^I$,
let $\stA(\ur,\ud)$ be the stack of objects in $\cA$ having class 
$(\ur,\ud)$.
Let $\stA^{\ge0}(\ur,\ud)$ be the substack parametrizing positive objects.
We denote by $\QS_D(\ur,\ud)$ the stack parametrizing quiver sheaves on $X$ of class $(\ur,\ud)$.
It is again a stack locally of finite type, but it is of infinite volume in general.
The open substack parametrizing positive quiver sheaves is denoted $\QS_D^{\ge0}(\ur,\ud)$.
Contrary to $\QS_D(\ur,\ud)$, this stack is of finite type and of finite volume.
Let $\QS_D^{\sst}(\ur,\ud)\sbs \QS_D(\ur,\ud)$
(respectively, $\QS_D^{\ge0,\sst}(\ur,\ud)\sbs \QS_D^{\ge0}(\ur,\ud)$) be the substack of quiver sheaves semistable in $\cA_D$ (respectivly, in $\cA_D^{\ge0}$).
In the Higgs case, we denote these stacks by $\Higgs_D(r,d)$,  $\Higgs_D^{\geq 0}(r,d)$,
$\Higgs_D^\sst(r,d)$,  and $\Higgs_D^{\geq 0,\sst}(r,d)$ respectively.

\vsp{.2in}

\section{Generating functions and Donaldson-Thomas invariants}
\label{sec3}
\vsp{.1in}

In this section we introduce several generating functions for the volume of the stacks of positive and/or semistable quiver sheaves, as well as the Donaldson-Thomas invariants of the categories $\cA_D,\, \cA_D^{\geq 0}$ in the Higgs case. We begin with a brief
review of the relevant theory of Hall algebras. Let us from now on assume that the curve $X$ is defined over a finite field $\bk=\mathbb{F}_q$, and set $X_k=X \otimes_{\mathbb{F}_q} \mathbb{F}_{q^k}$ for any $k\ge1$.

\vsp{.1in}

\subsection{Hall algebras and quantum torus.}
Let \cA be an abelian category, linear over a finite field $\bk=\mathbb{F}_q$, of finite homological dimension and such that $\dim \Ext^k(M,N)<\infty$ for all objects $M,N$ and all $k \geq 0$. Let $\chi : K_0(\cA) \otimes_{\Z} K_0(\cA) \to \Z$ denote the Euler form.
Let also $\Ga$ be a lattice equipped with a skew-symmetric form \ang{-,-}
and with a group homomorphism $\cl:K_0(\cA)\to\Ga$ such that
\[\hi(E,F)-\hi(F,E)=\ang{\cl E,\cl F},\qquad E,F\in\cA.\]
The algebra $\bT=\bQ(q^\oh)[\Ga]$ equipped with the product
\[e^\al\circ e^\be=(-q^\oh)^{\ang{\al,\be}}e^{\al+\be},\qquad \al,\be\in\Ga\]
is called the \textit{quantum (affine) torus}. Let $\cH$ be the Hall algebra of $\cA$ (see e.g. \cite{Slectures}).
Both $\cH$ and $\bT$ are graded by the lattice $\Ga$. We will occasionally consider their completions
$$\prod_{\alpha \in \Ga} \cH[\alpha], \qquad \prod_{\alpha \in \Ga} \mathbb{T}[\alpha]$$
which we still denote by $\cH$ and $\bT$ respectively for simplicity when there is no risk of confusion.
One defines the integration map
\eql{I:\cH\to\bT,\qquad [E]\mto (-q^\oh)^{\hi(E,E)}\frac{e^{\cl E}}{\n{\Aut E}}.}{int map}
A crucial property of $I$ is that it is a ring homomorphism if \cA has homological dimension one \cite{reineke_counting}.
More generally, it satisfies
\[I([E]\circ[F])=I(E)\circ I(F)\]
if $\Ext^k(F,E)=0$ for $k\ge2$. This explains the significance of Cor.\ \ref{cr:sst zero}.

\vsp{.1in}

\subsection{Generating functions}
\label{sec:invar}
We will denote the Hall algebra of $\cA_D$ by $\cH_D$. 
Set $\Ga=(\mathbb{Z}^2)^I=\bZ^I\oplus\bZ^I$, consider the map
$\cl: K_0(\cA_D) \to \Ga$ defined in (\ref{E:classdef}),
and equip $\Ga$ with bilinear forms
\begin{gather*}
\chi_D(\ga,\ga')
=\sum_{i\in I}\rbr{
(1-g) r_ir'_i + (g-1-l)r_ir'_{i+1}
+\begin{vmatrix} r_i & r'_i-r'_{i+1} \\ d_i & d'_i-d'_{i+1}\end{vmatrix}},\\
\ang{\ga,\ga'}
=\hi_D(\ga,\ga')-\hi_D(\ga',\ga)
=\sum_{i \in I}
\rbr{
(g-1-l)r_i(r'_{i+1}-r'_{i-1})
+\begin{vmatrix} r_i & 2r'_i-r'_{i-1}-r'_{i+1}\\ d_i & 2d'_i -d'_{i-1}-d'_{i+1}\end{vmatrix}}
\end{gather*}

for $\ga=(\ur,\ud)\in\Ga=\bZ^I\oplus\bZ^I$ and $\ga'=(\ur',\ud')\in\Ga=\bZ^I\oplus\bZ^I$.
Observe that when $n=1$ (i.e.\ in the Higgs case) the form $\ang{-,-}$ vanishes hence the quantum torus
$\bT=\bQ(q^\oh)\sbr{\bZ^2}$ is commutative.

We will use variables
\[\z^{\ud}=e^{(0,\ud)},\qquad \w^{\ur}=e^{(\ur,0)}, \qquad (\ur,\ud) \in \Ga.\]

Let $\ga=(\ur,\ud)\in\Ga$.
Recall that $\cA_D^\sst(\ga)=\cA_D^\sst(\ur,\ud)$ is the (finite) set of isomorphism classes
of semistable objects $\oE=(E,\te)\in\cA_D$ with $E$ having class $\ga$.
Note that if $(E,\te)$ is semistable and has positive rank then $E=(E_i)_i$ is an $I$-graded vector bundle.
Define
\begin{gather}
\one^\sst_{\ga}
=\sum_{\oE\in\cA^\sst_D(\ga)}[\oE]\in\cH_D,\\
\sH_D(\ga)e^\ga
=I\rbr{\one^\sst_{\ga}}
=\sum_{\oE\in\cA^\sst_D(\ga)}(-q^\oh)^{\hi_D(\ga,\ga)}\frac{1}{\n{\Aut\oE}}e^\ga.
\end{gather}
Tensoring by a line bundle preserves semistability; from this it is easy to see that $\sH_D(\ur,\ud)=\sH_D(\ur,\ud+\ur)$.

\vsp{.1in}

We likewise define the elements
\begin{gather}
\one^{\geq 0,\sst}_{\ga}=\sum_{\oE\in\cA^{\geq 0,\sst}_D(\ga)}[\oE]\in\cH_D,\\
\sH^{\geq 0}_D(\ga)e^\ga
=I\rbr{\one^{\geq 0,\sst}_{\ga}}
=\sum_{\oE\in\cA^{\geq 0,\sst}_D(\ga)}(-q^\oh)^{\hi_D(\ga,\ga)}\frac{1}{\n{\Aut\oE}}
e^\ga
\end{gather}
and
\begin{gather}\label{E:defI}
\one^{\geq 0}_{\ga}
=\sum_{\oE\in\cA^{\geq 0}_D(\ga)}[\oE]\in\cH_D,\\
\sI^{\geq 0}_D(\ga)e^\ga
=I\rbr{\one^{\geq 0}_{\ga}}
=\sum_{\oE\in\cA^{\geq 0}_D(\ga)}(-q^\oh)^{\hi_D(\ga,\ga)}\frac{1}{\n{\Aut\oE}}
e^\ga.
\end{gather}
Observe that the categories $\cA^{\geq 0}_D(\ga)$, and hence \textit{\`a fortiori} the categories $\cA^{\geq 0,\sst}_D(\ga)$, have finitely many objects up to isomorphisms so that the above sums are well-defined.

\vsp{.1in}

The uniqueness of the Harder-Narasimhan filtration implies the following identity in the Hall algebra:
\eq{\sum_{\ga}\one^{\geq 0}_{\ga}=\prod_{\ta\downarrow}
\rbr{\sum_{\mu(\ga)=\ta}\one^{\geq 0,\sst}_{\ga}},}
where the product is taken in the decreasing order of $\ta\in[0,+\infty]$.
If $l=\deg D\ge 2g-2$ then, by Corollary \ref{cr:sst zero}, the integration map $I:\cH_D\to\bT$ preserves the product
on the right.
Therefore we obtain
\eq{\label{HN2}
\sum_{\ur,\ud}\sI^{\geq 0}_D(\ur,\ud)\w^{\ur} \z^{\ud}=\prod_{\ta \downarrow}
\rbr{\sum_{\mu(\ur,\ud)=\ta}\sH^{\geq 0}_D(\ur,\ud)\w^{\ur} \z^{\ud}}.}
Note that unless $n=1$, the product on the right is ordered as the quantum torus is not commutative. In sections~4 and 5 we will see
how to compute the volumes $\sI^{\geq 0}_D(\ur,\ud)$ for $l \geq 2g-2$. This will allow us to determine in Section~\ref{sec:HN recursion}, via
a Harder-Narasimhan recursion, the volumes $\sH^{\geq 0}_D(\ur,\ud)$ of the stacks of semistable positive quiver sheaves and thus, by passing to a limit as $d \to \infty$, to determine the volumes $\sH_D(\ur,\ud)$ of the stacks of semistable  quivers sheaves.

\vsp{.1in}

\subsection{DT invariants.} The special case $n=1$ is the most important as it corresponds to the moduli stacks of (meromorphic) Higgs bundles. In that situation, $\mathbb{T}$ is commutative, $\Ga = \Z^2$, and we may define the Donaldson-Thomas invariants $\Om_D(r,d)$ by the following formula
\eq{\label{eq:Om1}
\sum_{d/r=\ta}\frac{\Om_D(r,d)}{q-1}\w^r\z^d
=\Log\rbr{\sum_{d/r=\ta}\sH_D(r,d)\w^r\z^d},
\qquad\ta\in\bR}
where $\Log$ is the plethystic logarithm.
\note{As I commented in other place, we can define $\Om_D$ for any $D$, but I think that it is meaningful only for $\deg D\ge 2g-2$.}
Comparing equations for $\ta$ and $\ta+1$, we obtain that $\Om_D(r,d)=\Om_D(r,d+r)$.
Various tests justify the conjecture that if $\deg D\ge 2g-2$, then $\Om_D(r,d)$ are independent of $d$ (cf.\ \cite[Conj.1.9]{chuang_motivic}).
Note that if $r,d$ are coprime then
\[\sH_D(r,d)=\frac{\Om_D(r,d)}{q-1}.\]
For $D=K_X$ and coprime $r,d$,
independence of $\sH_D(r,d)$ of $d$ was conjectured in \cite[Conj.3.2]{hausel_mirrora}.

\vsp{.1in}

We also consider the truncated version
\eq{\label{eq:correct DT}
\sum_{r,d}\frac{\Om^{\geq 0}_D(r,d)}{q-1}\w^r\z^d
=\Log\rbr{\sum_{r,d}\sI^{\geq 0}_D(r,d)\w^r\z^d}.}
If $\deg D\ge 2g-2$, then we obtain from \eqref{HN2} that
\eq{\label{eq:Om2}
\sum_{d/r=\ta}\frac{\Om^{\geq 0}_D(r,d)}{q-1}\w^r\z^d
=\Log\rbr{\sum_{d/r=\ta}\sH^{\geq 0}_D(r,d)\w^r\z^d},
\qquad\ta\in\bR.}

\begin{lemma}
\label{lm:Om equality}
If $\deg D\ge 2g-2$ and $d\ge l\binom r2$, then
$\Om^{\geq 0}_D({r},{d})=\Om_D({r},{d})$.
\end{lemma}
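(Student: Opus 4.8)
The plan is to compare the Harder--Narasimhan--type generating function identities \eqref{eq:Om1} and \eqref{eq:Om2} and exploit the stability equivalence established in Proposition~\ref{P:1}. First I would recall that Proposition~\ref{P:1}, combined with Lemma~\ref{L:2}, gives an equality of categories $\cA_D^{(\nu)}=\cA_D^{\ge0,(\nu)}$ for every slope $\nu\ge l\frac{r-1}2$ when restricted to objects of rank exactly $r$; in terms of stacks this says $\Higgs_D^\sst(r,d)=\Higgs_D^{\ge0,\sst}(r,d)$ as soon as $d/r=\nu\ge l\frac{r-1}2$, i.e.\ $d\ge l\binom r2$. Consequently, for a fixed rational slope $\ta$, the two series $\sum_{d/r=\ta}\sH_D(r,d)\w^r\z^d$ and $\sum_{d/r=\ta}\sH_D^{\ge0}(r,d)\w^r\z^d$ agree in all bidegrees $(r,d)$ with $d\ge l\binom r2$.

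The second step is to pass this agreement through the plethystic logarithm. Here I would use the elementary but crucial observation that $\Log$ is \emph{triangular} with respect to the partial order by divisibility together with the degree: the coefficient of $\w^r\z^d$ in $\Log(\sum\sH_D(r,d)\w^r\z^d)$ (restricted to a fixed slope $\ta$, so that everything is a genuine one-variable-plus-grading problem after the substitution $\w^r\z^d\leftrightarrow \w^r\z^{r\ta}$) depends only on the coefficients $\sH_D(r',d')$ with $(r',d')$ a \emph{positive integer multiple divisor} of $(r,d)$ along the ray of slope $\ta$ --- concretely, on $\sH_D(r/n,d/n)$ for $n\mid\gcd(r,d)$, via the Adams operations $\psi_k$. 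This is exactly the content recalled after \eqref{eq:Om00}. Since for such $n$ we have $d/n\ge l\binom{r/n}2$ whenever $d\ge l\binom r2$ (because $\binom{r/n}2\le\frac1n\binom r2$ for $n\ge1$, as $\frac{r/n(r/n-1)}2=\frac1{n^2}\frac{r(r-n)}2\le\frac1n\binom r2$), all the relevant coefficients of $\sH_D$ and $\sH_D^{\ge0}$ coincide, and therefore so do the corresponding coefficients of the two plethystic logarithms. Comparing \eqref{eq:Om1} and \eqref{eq:Om2} then yields $\Om_D^{\ge0}(r,d)=\Om_D(r,d)$ under the stated hypothesis.

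I should double-check one hypothesis-tracking point: both \eqref{eq:Om1} (via Corollary~\ref{cr:sst zero} applied in the ordered HN product, which is what makes the integration map multiplicative) and \eqref{eq:Om2} (via \eqref{HN2}, same input) require $\deg D\ge 2g-2$, which is assumed; this is why the lemma is stated with that hypothesis even though the stability equivalence of Proposition~\ref{P:1} itself only needs $l>0$. The main obstacle --- really the only subtlety --- is the bookkeeping in the second step: making precise that the coefficient of $\w^r\z^d$ in $\Log$ of a slope-$\ta$ series only involves coefficients indexed by $(r/n,d/n)$, and then verifying the inequality $d/n\ge l\binom{r/n}2$ so that Proposition~\ref{P:1} applies to \emph{all} of them simultaneously. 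Everything else is a direct comparison of two formulas with identical right-hand sides up to the truncation range.
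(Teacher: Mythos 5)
Your proposal is correct and follows the paper's own (very terse) proof exactly: Proposition~\ref{P:1} together with Lemma~\ref{L:2} gives $\sH^{\ge0}_D(r,d)=\sH_D(r,d)$ whenever $d\ge l\binom r2$, and comparing \eqref{eq:Om1} with \eqref{eq:Om2} — which is legitimate precisely because $\deg D\ge 2g-2$ makes \eqref{eq:Om2} available via \eqref{HN2} — yields the claim; your version simply spells out the triangularity of $\Log$ that the paper leaves implicit. One minor correction to your bookkeeping: the coefficient of $\w^r\z^d$ in the slope-$\ta$ plethystic logarithm depends on $\sH_D(r',r'\ta)$ for \emph{all} integers $0<r'\le r$ with $r'\ta\in\bZ$, not only on $\sH_D(r/n,d/n)$ for $n\mid\gcd(r,d)$ (for instance $\sH_D(3,0)$ enters $\Om_D(4,0)$ through the monomial $\sH_D(1,0)\sH_D(3,0)$ in $\log$), but this is harmless since for every such $r'$ one has $d'=r'\ta\ge l\,r'\frac{r-1}{2}\ge l\binom{r'}{2}$, so all the relevant coefficients still agree.
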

\note{Again, one can define $\Om_D^{\ge0}$ using $\sH_D^{\ge0}$ and get $\Om^{\geq 0}_D({r},{d})=\Om_D({r},{d})$ for arbitrary $D$.
But then we can not express $\Om_D^{\ge0}$ in terms of $\sI_D^{\ge0}$ if $\deg D<2g-2$.
This would imply that we can not relate $\Om^{\ge0}_D$ and $\Om^{\ge}_{K-D}$.}

\begin{proof}
By Proposition~\ref{P:1}, we have $\sH^{\geq 0}_D({r},{d})=\sH_D({r},{d})$ for $d\ge l\binom r2$.
Applying formulas \eqref{eq:Om1} and \eqref{eq:Om2}, we obtain
$\Om^{\geq 0}_D({r},{d})=\Om_D({r},{d})$ for $d\ge l\binom r2$.
\end{proof}

In Section~\ref{sec6} we will use
our computation of $\sI_D^{\geq 0}(r,d)$ for negative $D$ (see Section~\ref{sec: nilp}) to give a closed expression for the truncated DT-invariants $\Om^{\geq 0}_D(r,d)$. Because $\Om_D(r,d)=\Om_D(r,d+r)$ this will be enough to fully determine the DT-invariants
$\Om_D(r,d)$.

\vsp{.2in}

\section{Serre duality and nilpotent quiver sheaves.}
\label{sec4}
\vsp{.1in}

In this section, we will show by some simple Serre duality argument that the computation of the volume of the stacks $\QS^{\geq 0}_{D}(\alpha)$ is equivalent to the computation of the volume of stacks $\QS_{K-D}^{\geq 0}(\alpha)$ where $K$ is the canonical divisor of $X$. This will allow us to relate, \textit{when} $l \geq 2g-2$, the volume of 
$\QS_{D}^{\geq 0}(\alpha)$ to the volume of certain stacks parametrizing \textit{nilpotent} quivers sheaves.

\vsp{.1in}

\subsection{Consequences of Serre duality.}

\begin{proposition}\label{P:Serre_duality}
For any $D$ and any $\ga\in \Ga$,
we have $\sI^{\geq 0}_D(\ga)=\sI^{\geq 0}_{K-D}(\ga^*)$,
where $\ga^*=(\ga_{-i})_i\in\Ga$.
In the Higgs case, we have
$\Om^{\geq 0}_{D}(\ga)=\Om^{\geq 0}_{K-D}(\ga)$. \end{proposition}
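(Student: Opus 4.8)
The plan is to exhibit an explicit bijection between the objects being counted on the two sides, compatible with the weighting factors in the integration map. The key tool is Serre duality for quiver sheaves (Corollary~\ref{C:SD}): the contravariant functor $\oE\mapsto\oE^\vee$ defined by $\oE^\vee=R\Hom(\oE,\cO_X)$, suitably interpreted, sends $D$-twisted quiver sheaves to $(K-D)$-twisted quiver sheaves (after reindexing $i\mapsto -i$, which accounts for the $\ga\mapsto\ga^*$ on classes). Concretely, if $\oE=(E,\te)$ with $\te_i\in\Hom(E_i,E_{i+1}(D))$, then the dual bundles $E_i^\vee$ carry maps $\te_i^\vee\colon E_{i+1}^\vee(-D)\to E_i^\vee$, i.e.\ twisting by $K$ and reindexing, $(E^\vee)_i := E_{-i}^\vee(K)$ gets a map to $(E^\vee)_{i+1}(K-D)$. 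First I would check that this is a well-defined exact anti-equivalence between (the bundle parts of) $\cA_D$ and $\cA_{K-D}$ and that it sends $\cl$ to $\ga\mapsto\ga^*$ on the nose, using Riemann--Roch and the fact that $\deg(E_i^\vee(K))=\deg(K)\rk E_i-\deg E_i$.

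The second step is to verify that this anti-equivalence restricts to a bijection $\cA_D^{\ge0}(\ga)\leftrightarrow\cA_{K-D}^{\ge0}(\ga^*)$. Dualizing reverses inclusions and negates slopes, so $\mu_{\min}$ and $\mu_{\max}$ get swapped up to sign; the subtlety is that $\cA^{\ge0}$ is defined by $\mu_{\min}\ge0$, which is \emph{not} self-dual. Here I would use the same mechanism as in the Remark after Corollary~\ref{C:SD} and in Lemma~\ref{L:1}: because $\deg(K-D)$ and $\deg D$ are related and the HN-slopes of the underlying sheaf of a semistable quiver sheaf cannot spread apart by more than $l$, one gets control on $\mu_{\min}$ versus $\mu_{\max}$. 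Actually, the cleanest route may be to avoid the positivity condition on the nose and instead argue at the level of generating series: the HN-recursion \eqref{HN2} expresses $\sI^{\ge0}_D$ in terms of the $\sH^{\ge0}_D$, and one dualizes the \emph{semistable} strata, where the slope bound is automatic. I would set this up so that dualizing a semistable object of slope $\nu$ in $\cA_D^{\ge0}$ yields a semistable object of slope $\ge0$ in $\cA_{K-D}^{\ge0}$, matching volumes stratum by stratum.

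The third step is to track the weights under the integration map \eqref{int map}. The weighting factor is $(-q^\oh)^{\hi_D(\ga,\ga)}/\n{\Aut\oE}$. An anti-equivalence satisfies $\n{\Aut\oE}=\n{\Aut\oE^\vee}$, so that factor is preserved. For the quadratic twist I would invoke Corollary~\ref{cr:chi}(i) together with Corollary~\ref{C:SD} to see $\hi_{K-D}(\ga^*,\ga^*)=\hi_D(\ga,\ga)$ — duality swaps $\Ext^i$ with $\Ext^{2-i}$ and the alternating sum is symmetric, while the reindexing $i\mapsto -i$ permutes the summands in the formula for $\hi_D$. Hence $\sI^{\ge0}_D(\ga)=\sI^{\ge0}_{K-D}(\ga^*)$. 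For the Higgs case $n=1$ one has $\ga^*=\ga$ trivially, and the statement $\Om^{\ge0}_D(\ga)=\Om^{\ge0}_{K-D}(\ga)$ follows by applying $\Log$ to \eqref{eq:correct DT}, since $\Log$ is a fixed algebra operation and both sides of \eqref{eq:correct DT} coincide after the substitution.

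The main obstacle I anticipate is the second step: reconciling the non-self-dual positivity condition $\mu_{\min}\ge0$ with a duality that naturally exchanges it with a $\mu_{\max}\le\deg K$ type condition. I expect the resolution to hinge precisely on the hypothesis implicit throughout this section (and used in Corollary~\ref{cr:sst zero}) that lets one bound HN-slope gaps by $l$, so that ``all HN-slopes $\ge0$'' becomes equivalent under duality to the correspondingly shifted condition for $K-D$; getting the bookkeeping of the shift and the reindexing exactly right, rather than off by a twist, is where the care is needed. Everything else (exactness of the dual functor, behaviour of $\Aut$, symmetry of $\hi$) is routine given the homological results already established.
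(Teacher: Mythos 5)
Your overall strategy --- an honest dualization functor $\oE\mapsto\oE^\vee$ inducing a bijection of objects --- is not the route the paper takes, and it has two concrete defects that I do not think your proposed workarounds can repair. First, the class bookkeeping is off: in the statement $\ga^*=(\ga_{-i})_i$ is a \emph{pure reindexing}, preserving both ranks and degrees. Your functor $(E^\vee)_i=E_{-i}^\vee(K)$ sends a component of class $(r_{-i},d_{-i})$ to one of class $(r_{-i},(2g-2)r_{-i}-d_{-i})$, so even if everything else worked you would be proving an identity $\sI^{\ge0}_D(\ga)=\sI^{\ge0}_{K-D}(\tilde\ga)$ for a different $\tilde\ga$, not the stated one. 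Relatedly, your claim that $\hi_{K-D}(\ga^*,\ga^*)=\hi_D(\ga,\ga)$ is false for the $\ga^*$ of the statement: the correct relation (which the paper derives from $\hi(E,F)=-\hi(F,E(K))$ and Corollary~\ref{cr:chi}) is $\hi_{K-D}(\ga^*,\ga^*)=\hi_D(\ga,\ga)+2\hi(E,E[1])$, and this nonzero discrepancy is precisely what has to be cancelled elsewhere. Second, the positivity obstacle you flag is fatal for this approach: duality converts $\mu_{\min}\ge0$ into a condition of the form $\mu_{\max}\le 2g-2$, and for \emph{arbitrary} objects of $\cA_D^{\ge0}$ (which is what $\sI^{\ge0}_D$ counts --- not semistable ones) there is no bound on $\mu_{\max}-\mu_{\min}$; Lemma~\ref{L:1} constrains only semistable quiver sheaves. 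The fallback via the HN-recursion \eqref{HN2} is also unavailable here, since that identity requires $l\ge 2g-2$ for the integration map to be multiplicative (Corollary~\ref{cr:sst zero}), whereas the proposition is asserted for every $D$ and is in fact applied in the paper with $\deg(K-D)\le 0$.

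The paper's proof sidesteps all of this by never dualizing the sheaves. One groups the objects of $\cA_D^{\ge0}(\ga)$ by their underlying $I$-graded sheaf $E$, getting
$\sI^{\ge0}_D(\ga)=(-q^{1/2})^{\hi_D(\ga,\ga)}\sum_{E\in\cA^{\ge0}(\ga)}q^{h^0(E,E[1])}/\n{\Aut E}$,
and then observes that the \emph{same} $E$, reindexed by $i\mapsto -i$, is the generic member of $\cA^{\ge0}(\ga^*)$, with the $(K-D)$-twisted structures on it forming the vector space $\Hom(E,E[-1](K))$. Positivity is then tautologically preserved (the underlying sheaves are literally unchanged), and Serre duality enters only through the numerical identity $h^0(E,E[1])-h^0(E,E[-1](K))=\hi(E,E[1])$, which depends only on $\ga$ and exactly offsets the discrepancy $\hi_{K-D}(\ga^*,\ga^*)-\hi_D(\ga,\ga)=2\hi(E,E[1])$ in the prefactors. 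If you want to salvage your write-up, I would replace the dual functor by this ``fix the sheaf, compare the two Hom-spaces'' argument; as it stands, the second step of your plan cannot be completed.
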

\note{$\Om^{\geq 0}_{D}(r,d)=\Om^{\geq 0}_{K-D}(r,d)$ is true only if we define them using $\sI^{\ge0}$ (and not $\sH^{\ge0}$).
But then we can not relate $\Om_D^{\ge0}$ to $\Om_D$ if $\deg D<2g-2$.
For this reason we can not say that $\Om_{D}(r,d)=\Om_{K-D}(r,d)$ ($\Om_D$ for $\deg D<2g-2$ does not make much sense anyway, unless we can find a meaningful definition??).
}
\begin{proof}

We have, by definition,
$$\sI^{\ge0}_D(\ga)
=(-q^{\oh})^{\hi_D(\ga,\ga)}\sum_{\oE \in \cA^{\ge0}_D(\ga)}
\frac{1}{|\Aut(\oE)|}
=(-q^{\oh})^{\hi_D(\ga,\ga)}\sum_{E\in\cA^{\ge0}(\ga)}\frac{q^{h^0(E, E[1])}}{|\Aut(E)|},$$
where we have set
$$h^k(E,F)=\dim\Ext^k(E,F), \qquad k=0,1;\ E,F \in \cA.$$
Given $E=(E_i)_i\in\cA^{\ge0}(\ga)$, consider $F=(E_{-i})_i\in\cA^{\ge0}(\ga^*)$.
Then $(F,\vi)\in\cA^{\ge0}_{K-D}(\ga^*)$ means that
$$\vi\in\prod_i \Hom(E_i,E_{i-1}(K-D))=\Hom(E,E[-1](K)).$$
Therefore we have to prove that
$$\hi_D(\ga,\ga)+2h^0(E,E[1])=\hi_{K-D}(\ga^*,\ga^*)+2h^0(E,E[-1](K))$$
or equivalently, by Serre duality, that
\eq{\label{eq:dual1}
\hi_D(\ga,\ga)+2\hi(E,E[1])=\hi_{K-D}(\ga^*,\ga^*).}
By Corollary~\ref{cr:chi}, we have
\begin{gather*}
\hi_D(\ga,\ga)=\hi(E,E)-\hi(E,E[1]),\\
\hi_{K-D}(\ga^*,\ga^*)=\hi(E,E)-\hi(E,E[-1](K)).
\end{gather*}
This and the fact that $\chi(E,F)=-\chi(F,E(K))$, for any $E,F\in\cA$, imply \eqref{eq:dual1}.
The statement concerning Higgs bundles follows
from the definition of the DT-invariants
\eqref{eq:correct DT}.
\end{proof}

\vsp{.1in}

\subsection{Nilpotent quiver sheaves.}
We will say that a quiver sheaf $\oE=(E,\te)$ is \textit{nilpotent} if
there exists $s>0$ such that the composition $\te^s$
$$E\to E[1]\to E[2]\to\to\dots\to E[s]$$
vanishes.
We call the minimal $s$ satisfying this property the \textit{nilpotency index} of $\oE$.

\vsp{.1in}

Let us denote by $\QS^{\nil}_D(\ga)$ the stack of $D$-twisted nilpotent quiver sheaves of class $\ga\in\Ga$.
We also denote by $\QS_D^{\ge0,\nil}(\ga)$
the open substack parametrizing quiver sheaves $\oE$  belonging to $\cA_D^{\geq 0}$.
Observe that if $l<0$ then any quiver sheaf is automatically nilpotent, i.e. 
\begin{equation}\label{E:QSNil}
\QS_D(\ga)=\QS^{\nil}_D(\ga), \qquad \QS^{\geq 0}_D(\ga)=\QS^{\geq 0,\nil}_D(\ga)\qquad (\forall\;l <0).
\end{equation}

For any $D$ we may define $\sI^{\geq 0, \nil}_{D}(\ga)$ just like in (\ref{E:defI}),
and in the Higgs case we may also define
$\Om^{\nil}_D(r,d)$ like in \eqref{eq:Om1}
and $\Om^{\geq 0,\nil}_D(r,d)$ like in \eqref{eq:correct DT}.
From (\ref{E:QSNil}) and \eqref{eq:dual1} we immediately deduce the following

\vsp{.1in}

\begin{corollary}\label{C:SDQS}
If $l > 2g-2$ then, for any $\ga=(\ur,\ud) \in \Ga$,
we have $\sI^{\geq 0}_D(\ga)=\sI^{\geq 0,\nil}_{K-D}(\ga^*)$, or equivalently
$$\vol(\QS^{\geq 0}_D(\ga)(\bk))=q^{\hi(\ga,\ga[1])}\vol(\QS_{K-D}^{\geq 0,\nil}(\ga^*)(\bk)),$$
where
$$\hi(\ga,\ga[1])
=(1-g+l)\sum_i r_i r_{i+1} + \begin{vmatrix} r_i & r_{i+1} \\ d_i & d_{i+1} \end{vmatrix}.$$
In the Higgs case we have 
$\Om_D^{\geq 0}(r,d)=\Om_{K-D}^{\geq 0, \nil}(r,d)$.
\end{corollary}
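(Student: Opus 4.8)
The plan is to deduce everything directly from Proposition~\ref{P:Serre_duality}, using the elementary observation that, when $l>2g-2$, the divisor $K-D$ has \emph{negative} degree $\deg(K-D)=2g-2-l<0$. First I would note that, by~\eqref{E:QSNil} applied to the divisor $K-D$, every positive $(K-D)$-twisted quiver sheaf is automatically nilpotent, so $\QS^{\geq 0}_{K-D}(\ga^*)=\QS^{\geq 0,\nil}_{K-D}(\ga^*)$ and hence $\sI^{\geq 0}_{K-D}(\ga^*)=\sI^{\geq 0,\nil}_{K-D}(\ga^*)$. Chaining this with the equality $\sI^{\geq 0}_D(\ga)=\sI^{\geq 0}_{K-D}(\ga^*)$ supplied by Proposition~\ref{P:Serre_duality} gives the first claimed identity $\sI^{\geq 0}_D(\ga)=\sI^{\geq 0,\nil}_{K-D}(\ga^*)$.

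Next I would translate this into the stated volume identity. Unwinding the definition of the integration map (as in~\eqref{E:defI}) one has $\sI^{\geq 0}_D(\ga)=(-q^{\oh})^{\hi_D(\ga,\ga)}\vol(\QS^{\geq 0}_D(\ga)(\bk))$ and $\sI^{\geq 0,\nil}_{K-D}(\ga^*)=(-q^{\oh})^{\hi_{K-D}(\ga^*,\ga^*)}\vol(\QS^{\geq 0,\nil}_{K-D}(\ga^*)(\bk))$, the volume of a stack being its groupoid cardinality $\sum_{\oE}\n{\Aut\oE}^{-1}$. Substituting these two expressions into the identity just obtained, the twisting factors combine into $(-q^{\oh})^{\hi_{K-D}(\ga^*,\ga^*)-\hi_D(\ga,\ga)}$; by~\eqref{eq:dual1} the exponent equals $2\hi(E,E[1])=2\hi(\ga,\ga[1])$, which is even, so the sign cancels and the factor reduces to $q^{\hi(\ga,\ga[1])}$, yielding the displayed formula. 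The closed expression for $\hi(\ga,\ga[1])$ in the statement is then obtained by Riemann--Roch from the definition $\ga[1]=(r_{i+1},d_{i+1}+lr_{i+1})_i$, exactly as in the computation of $\hi_D$ in Section~\ref{sec3}.

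For the Higgs case ($n=1$) one has $\ga^*=\ga$, so Proposition~\ref{P:Serre_duality} directly gives $\Om^{\geq 0}_D(\ga)=\Om^{\geq 0}_{K-D}(\ga)$. Since $\QS^{\geq 0}_{K-D}(\ga)=\QS^{\geq 0,\nil}_{K-D}(\ga)$ by~\eqref{E:QSNil}, the generating series $\sum_{r,d}\sI^{\geq 0}_{K-D}(r,d)\w^r\z^d$ and $\sum_{r,d}\sI^{\geq 0,\nil}_{K-D}(r,d)\w^r\z^d$ coincide, hence so do the invariants extracted from them via~\eqref{eq:correct DT}; this gives $\Om^{\geq 0}_D(r,d)=\Om^{\geq 0,\nil}_{K-D}(r,d)$.

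I do not anticipate any real obstacle: the corollary is essentially a repackaging of Proposition~\ref{P:Serre_duality} once one observes $\deg(K-D)<0$. The one point needing care is the bookkeeping of the quadratic twist exponents $\hi_D(\ga,\ga)$ and $\hi_{K-D}(\ga^*,\ga^*)$ — specifically, using~\eqref{eq:dual1} to see that their difference is even, so that the $(-q^{\oh})^{\bullet}$ prefactor collapses to an honest power of $q$.
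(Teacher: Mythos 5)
Your proposal is correct and follows exactly the paper's (largely implicit) argument: the corollary is deduced from Proposition~\ref{P:Serre_duality} together with the observation that $\deg(K-D)=2g-2-l<0$ forces all $(K-D)$-twisted quiver sheaves to be nilpotent via~\eqref{E:QSNil}, with the prefactor bookkeeping handled by~\eqref{eq:dual1}. Your filling-in of the even-exponent cancellation $(-q^{\oh})^{2\hi(\ga,\ga[1])}=q^{\hi(\ga,\ga[1])}$ and the Riemann--Roch computation of $\hi(\ga,\ga[1])$ is exactly what the paper leaves to the reader.
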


\vsp{.1in}

\subsection{From Higgs sheaves to nilpotent Higgs sheaves.} The aim of this section is to prove a result somewhat similar to Corollary~\ref{C:SDQS} in the critical case $D=K$.

\vsp{.1in}

 We begin with the Higgs case, for which things can be made very explicit
in terms of Donaldson-Thomas invariants and Kac polynomials of curves. Let $\sA_{X,r,d}$ denote the number of absolutely indecomposable coherent sheaves on $X$ of rank $r$ and degree $d$. Similarly, let $\sA^{\geq 0}_{X,r,d}$ denote the number of positive (that is, contained in $\cA^{\ge0}$) absolutely indecomposable vector bundles of rank $r$ and degree $d$. Both of these numbers are the evaluation, at the collection of Weil numbers of $X$, of certain polynomials determined in  \cite{schiffmann_indecomposable} which only depend on the genus of $X$. For simplicity, we will drop the index $X$ from the notation when the curve is understood.

\vsp{.1in}

\begin{proposition}
For $d\ge(2g-2)\binom r2$, we have $\sA^{\geq 0}_{r,d}=\sA_{r,d}$.
\end{proposition}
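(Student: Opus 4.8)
The plan is to show that for $d$ large enough, every absolutely indecomposable vector bundle of rank $r$ and degree $d$ is automatically positive, i.e.\ lies in $\cA^{\ge0}$. Since $\sA^{\ge0}_{r,d}$ counts positive absolutely indecomposable bundles and $\sA_{r,d}$ counts all of them (and positive bundles are in particular bundles), we always have $\sA^{\ge0}_{r,d}\le\sA_{r,d}$; so it suffices to prove the reverse inclusion on the level of isomorphism classes (even over $\bar{\bk}$, to match the ``absolutely'' qualifier). The key input is a bound on $\mu_{\min}$ of an indecomposable bundle in terms of $\mu_{\max}$: I would invoke the classical fact that if $E$ is a vector bundle on $X$ with $\mu_{\min}(E)>2g-2+\mu_{\max}(E)$ then $E$ splits as a direct sum along the first step of its HN filtration, because $\Ext^1$ between a bundle of small slope and one of much larger slope vanishes by Serre duality (the relevant $\Hom$ group $\Hom(F,F''(K))$ vanishes for slope reasons). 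Hence an \emph{indecomposable} bundle $E$ has all HN-slopes within an interval of length $\le 2g-2$.

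\emph{Execution.} Let $E$ be absolutely indecomposable of rank $r$ and degree $d$, and let $\si(E)=\{\nu_1<\dots<\nu_s\}$ with HN-factors of ranks $r_k\ge1$ and slopes $\nu_k$. By the splitting criterion above, indecomposability forces $\nu_{k+1}-\nu_k\le 2g-2$ for all $k$ (otherwise $E$ would decompose, contradicting indecomposability over $\bar{\bk}$ as well). This is exactly the hypothesis of the computation in the proof of Lemma~\ref{L:2}, with $l$ replaced by $2g-2$: we get $\nu_k\le\nu_1+(k-1)(2g-2)$, hence
\[
(2g-2)\tfrac{r-1}{2}\le\frac{d}{r}=\frac{\sum_k\nu_k r_k}{r}\le\frac{\sum_k(\nu_1+(k-1)(2g-2))r_k}{r}\le\nu_1+(2g-2)\tfrac{r-1}{2},
\]
using $\sum_k(k-1)r_k\le\binom r2$ when $\sum_k r_k=r$ and all $r_k\ge1$. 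Therefore $\nu_1\ge0$, i.e.\ $\mu_{\min}(E)\ge0$, so $E\in\cA^{\ge0}$. This gives $\sA_{r,d}\le\sA^{\ge0}_{r,d}$, and combined with the trivial reverse inequality, $\sA^{\ge0}_{r,d}=\sA_{r,d}$ for $d\ge(2g-2)\binom r2$.

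\emph{Main obstacle.} The only genuinely nontrivial ingredient is the splitting criterion bounding the HN-range of an indecomposable bundle by $2g-2$; everything else is the same elementary counting inequality already used in Lemma~\ref{L:2}. One must be a little careful that the argument is insensitive to base field extension: indecomposability is asserted over $\bar{\bk}$, and the HN filtration, slopes, and the vanishing of the relevant $\Ext^1$ all commute with extension of scalars, so an absolutely indecomposable $E$ over $\bk$ remains indecomposable (hence HN-range-bounded) over $\bar{\bk}$, and the slope bound—being about $\rk$ and $\deg$, which are preserved—applies verbatim. I would also note the boundary case $d=(2g-2)\binom r2$ is handled by the non-strict inequality above giving $\nu_1\ge0$ exactly as needed.
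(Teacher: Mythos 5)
Your proof is correct and follows essentially the same route as the paper: a Serre-duality splitting argument showing an indecomposable bundle has no gap exceeding $2g-2$ between consecutive HN slopes, followed by the counting inequality of Lemma~\ref{L:2} with $l$ replaced by $2g-2$. (One small slip in the prose: the preliminary statement ``$\mu_{\min}(E)>2g-2+\mu_{\max}(E)$ implies splitting'' and the claim that all HN slopes lie in an interval of length $\le 2g-2$ are not what you actually prove or use --- the correct and used statement is the bound $\nu_{k+1}-\nu_k\le 2g-2$ on consecutive gaps, exactly as in the paper.)
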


This is proved in \cite[Prop.2.5]{schiffmann_indecomposable}.
We provide below a proof for the comfort of the reader.
 
\begin{lemma}[cf.~Lemma \ref{L:1}]
Let $E$ be an indecomposable vector bundle over $X$.
Then $\si(E)=\set{\nu_1<\dots<\nu_s}$ does not have gaps of length greater than $2g-2$.
\end{lemma}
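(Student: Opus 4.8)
The plan is to mimic the argument of Lemma~\ref{L:1}, but now exploiting indecomposability rather than the (non-)existence of a Higgs field. Suppose for contradiction that $\si(E)=\set{\nu_1<\dots<\nu_s}$ has a gap $\nu_{k+1}-\nu_k>2g-2$. The HN-filtration of $E$ produces a (unique) short exact sequence
\[
0\to E'\to E\to E''\to0
\]
in $\Coh(X)$ with $\mu_{\min}(E')\ge\nu_{k+1}$ and $\mu_{\max}(E'')\le\nu_k$, so that $\mu_{\min}(E')-\mu_{\max}(E'')>2g-2$. First I would note that this inequality forces $\Ext^1(E'',E')=0$: indeed, by Serre duality $\Ext^1(E'',E')\iso\Hom(E',E''(K_X))^*$, and every HN-factor of $E'$ has slope strictly greater than every HN-factor of $E''(K_X)$ (whose slopes are at most $\nu_k+2g-2<\nu_{k+1}$), so there are no nonzero maps between them. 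Hence the extension splits and $E\iso E'\oplus E''$, contradicting indecomposability of $E$.

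So the only real content is the vanishing $\Hom(A,B)=0$ for $A$ semistable, $B$ semistable, $\mu(A)>\mu(B)$ (applied to HN-factors), which is the standard fact already used implicitly in the paper, combined with Serre duality for coherent sheaves on $X$. I would then remark (as the paper's surrounding discussion does for the Higgs case) that this lemma is precisely the input needed to run the analogue of Lemma~\ref{L:2}: if $E$ is indecomposable of rank $r$ and $d=\deg E\ge(2g-2)\binom r2$, then since the gaps $\nu_{k+1}-\nu_k\le 2g-2$ give $r_k\ge1$ and $\nu_k\le\nu_1+(k-1)(2g-2)$, one gets
\[
(2g-2)\frac{r-1}{2}\le\frac dr=\frac{\sum_k\nu_kr_k}{r}\le\nu_1+(2g-2)\frac{r-1}{2},
\]
whence $\nu_1\ge0$ and $E\in\cA^{\ge0}$, i.e.\ $\sA^{\ge0}_{r,d}=\sA_{r,d}$. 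Here I use $\sum_k(k-1)r_k\le\binom r2$ when $\sum r_k=r$ and all $r_k\ge1$, exactly as in Lemma~\ref{L:2}.

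The main (and essentially only) obstacle is bookkeeping with the arm/leg--type slope estimate above: one must be careful that the displayed chain of inequalities is genuinely reversible in the direction needed to conclude $\nu_1\ge0$, and that the edge cases ($s=1$, or some $r_k$ large) are covered by the combinatorial bound. The homological part — the $\Ext^1$-vanishing from the slope gap via Serre duality — is routine and parallels the proof of Lemma~\ref{L:1} almost verbatim, with "$\te(E')\sbs E'[1]$'' replaced by "the extension class in $\Ext^1(E'',E')$ vanishes''. I would therefore present the lemma's proof in two short steps: (1) the slope gap kills $\Ext^1(E'',E')$ by Serre duality, forcing a splitting and contradicting indecomposability; (2) conclude. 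The subsequent Proposition then follows by the rank/degree averaging argument, identical in form to Lemma~\ref{L:2}.
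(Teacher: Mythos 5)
Your proof is correct and coincides with the paper's: both derive the short exact sequence $0\to E'\to E\to E''\to 0$ from the slope gap, use Serre duality to identify $\Ext^1(E'',E')$ with $\Hom(E',E''(K))^*$, which vanishes since $\mu_{\max}(E''(K))\le\nu_k+2g-2<\nu_{k+1}\le\mu_{\min}(E')$, and conclude that the sequence splits, contradicting indecomposability. The additional remarks about the subsequent corollary match the paper's argument as well.
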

\begin{proof}
Assume that there is a gap of length greater than $2g-2$, say $\nu_{k+1}-\nu_k>2g-2$. Then there exists an exact sequence
\[0\to E'\to E\to E''\to0,\]
where $E'\in\cA^{\ge \nu_{k+1}}$ and $E''\in\cA^{\le \nu_k}$.
This implies that
\[E''(K)\in\cA^{\le \nu_k+2g-2}\subset\cA^{<\nu_{k+1}}\]
and therefore $\Ext^1(E'',E')\iso\Hom(E',E''(K))^*=0$. 
We conclude that the above sequence
splits and $E$ is not indecomposable.	
\end{proof}

\begin{corollary}
	Assume that $E$ is an indecomposable vector bundle over $X$ of rank $r$
	 and degree $d\ge(2g-2)\binom r2$.
	Then $E\in\cA^{\ge0}$.
\end{corollary}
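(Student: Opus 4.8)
The plan is to repeat the averaging argument from the proof of Lemma~\ref{L:2}, with $l$ replaced by $2g-2$ and with the gap Lemma just established playing the role of Lemma~\ref{L:1}. First I would write the HN slopes as $\si(E)=\set{\nu_1<\nu_2<\cdots<\nu_s}$ and let $r_k,d_k$ be the rank and degree of the $k$-th HN factor of $E$, so that $\sum_k r_k=r$, $\sum_k d_k=d$ and $\nu_k=d_k/r_k$. Since $E$ is indecomposable, the preceding Lemma gives $\nu_{k+1}-\nu_k\le 2g-2$ for all $k$; in particular each $r_k\ge1$ and, by induction, $\nu_k\le\nu_1+(k-1)(2g-2)$.

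Then I would estimate
\[
(2g-2)\frac{r-1}{2}\le\frac{d}{r}=\frac{\sum_k\nu_k r_k}{r}
\le\frac{\sum_k\big(\nu_1+(k-1)(2g-2)\big)r_k}{r}
\le\nu_1+(2g-2)\frac{r-1}{2},
\]
where the first inequality is just the hypothesis $d\ge(2g-2)\binom r2$ rewritten as $d/r\ge(2g-2)\tfrac{r-1}{2}$, and the last step uses the elementary bound $\sum_k(k-1)r_k\le\binom r2$ valid whenever $\sum_k r_k=r$ and $r_k\ge1$ for all $k$ (the same fact already invoked in Lemma~\ref{L:2}). Cancelling $(2g-2)\tfrac{r-1}{2}$ yields $\nu_1\ge0$, i.e.\ $\mu_{\min}(E)=\nu_1\ge0$, which is exactly the statement $E\in\cA^{\ge0}$.

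I do not expect any genuine obstacle: the entire content has already been absorbed into the gap Lemma (whose proof, via $\Ext^1(E'',E')\iso\Hom(E',E''(K))^*$, is the only place Serre duality and the bound $2g-2$ enter), and what remains is the same purely combinatorial averaging as in Lemma~\ref{L:2}. The one point worth a sentence is that the argument is uniform in $g$: when $2g-2\le0$ the gap Lemma forces $s=1$, and the displayed chain of inequalities then reduces to $0\le d/r=\nu_1\le\nu_1$, so the conclusion still holds (and for $r=1$ the hypothesis is simply $d\ge0$).
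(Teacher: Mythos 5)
Your proof is correct and is exactly what the paper intends: its own proof of this corollary consists of the single sentence that the argument is ``in all points analogous to the proof of Lemma~\ref{L:2}'', and you have carried out precisely that analogy, substituting the indecomposability gap lemma (with bound $2g-2$) for Lemma~\ref{L:1} and repeating the averaging estimate. Your closing remark about small genus is a harmless refinement the paper does not bother with (for $g\le 1$ the gap lemma forces $E$ to be semistable, and the nontrivial cases there reduce to $r=1$).
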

\begin{proof}
The proof is in all points analogous to the proof of Lemma \ref{L:2}.
\end{proof}

The first formula of the next result was proved by the first author \cite{mozgovoy_motivicb}
in the case of quiver representations. The second formula was proved by the second author \cite{schiffmann_indecomposable}.
We give a unified approach based on \cite{mozgovoy_motivicb}.

\begin{theorem}
\label{th:Om_K}
We have
\begin{align}
\sum_{r,d}\sI^{\geq 0}_K(r,d)\w^r\z^d
&=\sum_{r,d}\sI^{\geq 0}_0(r,d)\w^r\z^d=\Exp\rbr{\frac{q\sum_{r,d}\sA^{\geq 0}_{r,d}\;\w^r\z^d}{q-1}},\\
\sum_{r,d}\sI^{\geq 0,\nil}_{0}(r,d)\w^r\z^d
&=\Exp\rbr{\frac{\sum_{r,d}\sA^{\geq 0}_{r,d}\;\w^r\z^d}{q-1}}.
\end{align}
\end{theorem}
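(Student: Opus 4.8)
The plan is to reduce the statement to two inputs: a volume formula for stacks of \emph{nilpotent} Higgs sheaves, and an elementary plethystic identity encoding the prime decomposition of $\bF_q[t]$. Throughout we are in the Higgs case $n=1$, where $\Ga=\bZ^2$, $\ga^*=\ga$, and by Corollary~\ref{cr:chi}(ii) the Euler form is $\hi_D(\oE,\oF)=-l\rk E\cdot\rk F$. For $D=0$ we have $l=0$, so $\hi_0\equiv0$; the sign factor $(-q^\oh)^{\hi_0(\ga,\ga)}$ in the integration map is then trivial, and $\sI^{\ge0}_0(r,d)$ (resp.\ $\sI^{\ge0,\nil}_0(r,d)$) is exactly the groupoid volume of the stack of pairs $(E,\te)$ with $E\in\Coh^{\ge0}(X)$ of class $(r,d)$ and $\te\in\End(E)$ (resp.\ $\te$ nilpotent). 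Stratifying by the underlying sheaf,
$$\sI^{\ge0}_0(r,d)=\sum_{E\in\Coh^{\ge0}(r,d)}\frac{\n{\End E}}{\n{\Aut E}},\qquad
\sI^{\ge0,\nil}_0(r,d)=\sum_{E\in\Coh^{\ge0}(r,d)}\frac{\n{\mathcal N(\End E)}}{\n{\Aut E}},$$
$\mathcal N(\End E)$ being the set of nilpotent endomorphisms. Also Proposition~\ref{P:Serre_duality}, applied with $D=K$ (so $K-D=0$) and $\ga^*=\ga$, gives $\sI^{\ge0}_K(r,d)=\sI^{\ge0}_0(r,d)$ for free; so the first equality of the theorem holds, and it remains to prove the two series identities for $D=0$.

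The crux is the second (nilpotent) identity, which I would either quote from \cite{schiffmann_indecomposable} or reprove in the spirit of \cite{mozgovoy_motivicb}: stratify the stack of nilpotent Higgs sheaves by the Jordan type of $\te$ relative to $\Coh^{\ge0}(X)$ -- i.e.\ by the associated-graded object $\mathrm{gr}_\te E$ together with the multiplicities of each indecomposable summand -- and sum the volumes of the strata, which organise into a product of local contributions attached to positive indecomposable objects; recognise the total as a plethystic exponential, the factor $\tfrac1{q-1}$ coming from the fact that an absolutely indecomposable bundle has local endomorphism ring with residue field $\bF_q$. Combined with the facts recalled from \cite{schiffmann_indecomposable} -- polynomiality of $\sA^{\ge0}_{r,d}$ in the Weil numbers of $X$, and the identification of the positive indecomposables of nonzero rank with positive absolutely indecomposable \emph{bundles} -- this gives $\sum_{r,d}\sI^{\ge0,\nil}_0(r,d)\w^r\z^d=\Exp\big(\tfrac1{q-1}\sum_{r,d}\sA^{\ge0}_{r,d}\w^r\z^d\big)$. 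I expect this to be the main obstacle: matching the Jordan-stratification combinatorics with the plethystic exponential, and controlling the $\ge0$ truncation uniformly in $(r,d)$.

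The first identity for $D=0$ then follows by the prime decomposition of $\te$, as in \cite{mozgovoy_motivicb}. Given $(E,\te)$ with $E\in\Coh^{\ge0}(X)$, the subalgebra $\bF_q[\te]\sbs\End E$ equals $\bF_q[t]/(m)$ for a monic $m$, and the factorization $m=\prod_\pi\pi^{e_\pi}$ over monic irreducibles gives a canonical decomposition $(E,\te)=\bigoplus_\pi(E_\pi,\te_\pi)$. Setting $s_\pi=\pi(\te_\pi)$: the residue field $\bF_q[t]/(\pi)\iso\bF_{q^{\deg\pi}}$ lifts into the Artinian local ring $\bF_q[\te_\pi]$ by Hensel, so $E_\pi$ is an $\bF_{q^{\deg\pi}}$-linear object of $\Coh(X)$, i.e.\ an object $E'_\pi$ of $\Coh(X_{\deg\pi})$ carrying the nilpotent endomorphism $s_\pi$; since $E_\pi$ is a direct summand of $E$ it lies in $\Coh^{\ge0}(X)$, passing between the $\bF_q$- and $\bF_{q^{\deg\pi}}$-pictures multiplies rank and degree by $\deg\pi$ (a short Riemann--Roch check) and preserves $\mu_{\min}\ge0$, so $(E'_\pi,s_\pi)$ is a positive nilpotent Higgs sheaf over $X_{\deg\pi}$. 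Thus the groupoid of positive Higgs sheaves over $X/\bF_q$ is the product, over closed points $x$ of $\bA^1_{\bF_q}$, of the groupoids of positive nilpotent Higgs sheaves over $X_{\deg x}$, which (using polynomiality of the counts) reads
$$\sum_{r,d}\sI^{\ge0}_0(r,d)\w^r\z^d
=\prod_{x\in\n{\bA^1_{\bF_q}}}\psi_{\deg x}\Big(\sum_{r,d}\sI^{\ge0,\nil}_0(r,d)\w^r\z^d\Big).$$
Combining with the plethystic identity $\prod_{x\in\n{\bA^1_{\bF_q}}}\psi_{\deg x}\Exp(G)=\Exp(qG)$ -- proved by taking $\log$, where it reduces to $\sum_{e\mid m}e\,N_e=q^m$ ($N_e$ the number of monic irreducibles of degree $e$), equivalently $Z_{\bA^1}(t)=(1-qt)\inv$ -- and with the nilpotent formula, I obtain $\sum_{r,d}\sI^{\ge0}_0(r,d)\w^r\z^d=\Exp\big(\tfrac q{q-1}\sum_{r,d}\sA^{\ge0}_{r,d}\w^r\z^d\big)$, which with $\sI^{\ge0}_K=\sI^{\ge0}_0$ completes the proof.
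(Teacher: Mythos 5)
Your proposal is correct, but it reaches the first identity by a genuinely different route than the paper. For the nilpotent identity (the second display) you and the paper do essentially the same thing: stratify by the underlying positive sheaf, use its Krull--Schmidt decomposition $E=\bigoplus_\iota E_\iota^{n_\iota}$, take the local contribution $\n{\Hom^{\nil}(E,E)}/\n{\Aut E}=\prod_\iota q^{-n_\iota}/(q^{-1})_{n_\iota}$ from \cite[Cor.~2.4]{schiffmann_indecomposable}, and repackage the sum over multiplicity functions on indecomposables as a plethystic exponential via the Hua-type identity of \cite[Thm.~5.1]{mozgovoy_motivicb} --- your phrasing in terms of a ``Jordan-type'' stratification is slightly off (that is the machinery of Section~5, leading to the $J_\la H_\la$ formula), but the substance and the level of reliance on the cited references match the paper's. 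Where you diverge is the first display: the paper computes $\sI^{\ge0}_K$ directly by observing that the fiber of the forgetful map is $\Hom(E,E\ts K)\iso\Ext^1(E,E)^*$, so the local factor becomes $q^{-\hi(E,E)}\prod_\iota(q^{-1})_{n_\iota}^{-1}$, and then runs the same Krull--Schmidt argument a second time; you instead deduce the unrestricted count from the nilpotent one via the $\bF_q[t]$-primary decomposition of $\te$, identifying the $\pi$-primary piece with a positive nilpotent Higgs sheaf over $X_{\deg\pi}$, and then use $\prod_x\psi_{\deg x}\Exp(G)=\Exp(qG)$, which after taking $\log$ reduces to $\sum_{e\dv m}eN_e=q^m$, i.e.\ $Z_{\bA^1}(t)=(1-qt)^{-1}$. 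I checked this: the rank/degree bookkeeping under $X_{\deg\pi}\to X$, the preservation of $\mu_{\min}\ge0$ for direct summands, and the plethystic identity (with the convention $\psi_m(q)=q^m$) all work, provided one also invokes, as you do, the polynomiality of $\sI^{\ge0,\nil}_0$ in the Weil numbers so that base change is realized by $\psi_m$. Your route has the advantage of explaining the factor $q$ conceptually (it is the point count of $\bA^1$), at the cost of needing the polynomiality input and the primary-decomposition formalism; the paper's route is a more uniform two-fold application of one lemma. Your use of Proposition~\ref{P:Serre_duality} to get $\sI^{\ge0}_K=\sI^{\ge0}_0$ is legitimate and non-circular.
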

\begin{proof}
To prove the first equation we apply the same approach as in
\cite[Theorem 5.1]{mozgovoy_motivicb}.
The forgetful map
\[\Higgs_K^{\geq 0}(r,d)\to\sCoh^{\geq 0}(r,d)\]
has a fiber over $E\in\sCoh^{\geq 0}(r,d)$ that is equal to
\[\Hom(E,E\ts K)\iso\Ext^1(E,E)^*.\]
If $E=\bop E_\iota^{n_\iota}$ is a decomposition of $E$ into the sum of indecomposable objects
then the contribution of the fiber of $E$ to $\vol([\Higgs^{\geq 0}_K(r,d)(\bk)))$ is equal to (see \cite[Theorem 2.1]{mozgovoy_motivicb})
\[\frac{[\Hom(E,E\ts K)]}{[\Aut E]}
=\frac{[\Ext^1(E,E)]}{[\End(E)]\prod_\iota(q\inv)_{n_\iota}}
=\frac{q^{-\hi(E,E)}}{\prod_\iota(q\inv)_{n_\iota}},
\]
where $(q)_n=(1-q)\dots(1-q^n)$. Note that $\hi(E,E)=r^2(1-g)$.
We conclude from the proof of \cite[Theorem 5.1]{mozgovoy_motivicb} that
\begin{multline*}
\sum_{r,d}\sI^{\geq 0}_K(r,d)w^rz^d
=\sum_{r,d}q^{r^2(1-g)}\vol\rbr{\Higgs^{\geq 0}_K(r,d)(\bk)}w^rz^d\\
=\sum_{n:\Ind\to\bN}\prod_{E\in\Ind}\frac{e^{n(E)\cl E}}{(q\inv)_{n(E)}}
=\Exp\rbr{\frac{\sum \sA^{\geq 0}_{r,d}\;w^rz^d}{1-q\inv}}
\end{multline*}
where we have denoted by $\Ind$ the set of isoclasses of indecomposable objects in $\Coh^{\geq 0}(X)$.

The proof of the second formula goes through the same lines.
Consider the forgetful map
\[\Higgs^{\geq 0,\nil}_{0}(r,d)\to\sCoh^{\geq 0}(r,d).\]
If $E=\bop E_\iota^{n_\iota}$ is a splitting into indecomposables as before
then the contribution of $E$ in $\vol(\Higgs^{\geq 0,\nil}_{0}(r,d)(\bk))$
is equal to \cite[Cor.2.4]{schiffmann_indecomposable}
\[\frac{[\Hom^\nil(E,E)]}{[\Aut E]}=\prod_{\iota}\frac{q^{-n_\iota}}{(q\inv)_{n_\iota}}.\]

Applying again the proof of \cite[Theorem 5.1]{mozgovoy_motivicb} we conclude that
\begin{multline*}
\sum_{r,d}\sI^{\geq 0, \nil}_{0}(r,d)w^rz^d
=\sum_{r,d}\vol(\Higgs^{\geq 0,\nil}_{0}(r,d)(\bk))w^rz^d\\
=\sum_{n:\Ind\to\bN}\prod_{E\in\Ind}\frac{q^{-n(E)}e^{n(E)\cl E}}{(q\inv)_{n(E)}}
=\Exp\rbr{\frac{\sum q\inv\sA^{\geq 0}_{r,d}\;w^rz^d}{1-q\inv}}.
\end{multline*}
\end{proof}

\begin{corollary}
\label{cor:K0}
	We have, for any pair $(r,d)$,
\begin{enumerate}
\item $\Om_K^{\geq 0}(r,d)=q\Om_{0}^{\geq 0,\nil}(r,d)=q\sA^{\geq 0}_{r,d}$,
\item $\Om_K(r,d)=q\sA_{r,d}$.
\end{enumerate}
\end{corollary}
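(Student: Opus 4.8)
The plan is to deduce Corollary~\ref{cor:K0} from Theorem~\ref{th:Om_K} together with the results of Section~\ref{sec3} and Section~\ref{sec4}. First I would apply the plethystic logarithm to the two identities of Theorem~\ref{th:Om_K}. By the definition of the truncated DT-invariants \eqref{eq:correct DT}, applying $\Log$ to
\[
\sum_{r,d}\sI^{\ge0}_K(r,d)\w^r\z^d=\Exp\!\rbr{\frac{q\sum_{r,d}\sA^{\ge0}_{r,d}\w^r\z^d}{q-1}}
\]
gives immediately $\sum_{r,d}\frac{\Om^{\ge0}_K(r,d)}{q-1}\w^r\z^d=\frac{q}{q-1}\sum_{r,d}\sA^{\ge0}_{r,d}\w^r\z^d$, since $\Log\circ\Exp=\mathrm{id}$; comparing coefficients of $\w^r\z^d$ yields $\Om^{\ge0}_K(r,d)=q\sA^{\ge0}_{r,d}$. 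The same manipulation applied to the second formula of Theorem~\ref{th:Om_K}, using the definition of $\Om^{\ge0,\nil}_0$ (which is the nilpotent analogue of \eqref{eq:correct DT}), gives $\Om^{\ge0,\nil}_0(r,d)=\sA^{\ge0}_{r,d}$, so that $\Om^{\ge0}_K(r,d)=q\,\Om^{\ge0,\nil}_0(r,d)=q\sA^{\ge0}_{r,d}$, which is statement (i). One should note here that $\frac{q}{q-1}\sum\sA^{\ge0}_{r,d}\w^r\z^d$ already lies in $\bQ[[\z,\w]]^+$ and involves no further plethystic correction because it is applied after the $\Exp$, so no Adams operators intervene — this is the one point to state carefully.

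For statement (ii) I would pass from the truncated invariants to the genuine ones by a limiting argument in the degree. By Lemma~\ref{lm:Om equality} (applied with $D=K$, so $l=2g-2$), we have $\Om^{\ge0}_K(r,d)=\Om_K(r,d)$ as soon as $d\ge(2g-2)\binom r2$. Combining this with (i) gives $\Om_K(r,d)=q\sA^{\ge0}_{r,d}$ for all such $d$. On the other hand, the Proposition preceding Theorem~\ref{th:Om_K} states that $\sA^{\ge0}_{r,d}=\sA_{r,d}$ for $d\ge(2g-2)\binom r2$. Hence $\Om_K(r,d)=q\sA_{r,d}$ for all sufficiently large $d$. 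Finally, both sides are known to be periodic in $d$ with period $r$: on the left, $\Om_K(r,d)=\Om_K(r,d+r)$ as observed just after \eqref{eq:Om1} (it follows from comparing the defining equations for $\ta$ and $\ta+1$, i.e.\ from $\sH_D(r,d)=\sH_D(r,d+r)$); on the right, $\sA_{r,d}=\sA_{r,d+r}$ since tensoring by a degree-one line bundle is an autoequivalence of $\Coh(X)$ preserving indecomposability and defined over $\bk$. Since two $r$-periodic functions of $d$ that agree for all $d$ in some infinite arithmetic-progression-free tail (in fact for all $d\ge(2g-2)\binom r2$, which contains a full period) must coincide everywhere, we conclude $\Om_K(r,d)=q\sA_{r,d}$ for all $(r,d)$.

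The only genuinely delicate point is the first one: making sure the two displayed formulas of Theorem~\ref{th:Om_K} are exactly in the form $(\text{generating function})=\Exp(\cdots)$ that inverts under $\Log$ to give the DT-invariant generating series, i.e.\ that the normalizations of $\sI^{\ge0}_K$, $\sI^{\ge0,\nil}_0$ in Section~\ref{sec:invar} match those used in \eqref{eq:correct DT} and its nilpotent analogue. This is really just unwinding definitions, but it is where a sign or a factor of $q-1$ could slip. Everything else — the coefficient extraction, the use of Lemma~\ref{lm:Om equality} and of the Proposition on $\sA^{\ge0}_{r,d}=\sA_{r,d}$, and the periodicity argument — is routine. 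I do not anticipate any serious obstacle; the content of the corollary is essentially a repackaging of Theorem~\ref{th:Om_K} via plethystic inversion plable together with the large-degree stabilization already recorded in the excerpt.
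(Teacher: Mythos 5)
Your proposal is correct and follows essentially the same route as the paper: part (i) is obtained by applying $\Log$ to the two identities of Theorem~\ref{th:Om_K} and comparing with the definition \eqref{eq:correct DT}, and part (ii) combines Lemma~\ref{lm:Om equality} (for $D=K$) with the equality $\sA^{\ge0}_{r,d}=\sA_{r,d}$ in large degree and the $r$-periodicity $\Om_K(r,d)=\Om_K(r,d+r)$, $\sA_{r,d}=\sA_{r,d+r}$. The points you flag as delicate (normalizations, plethystic inversion) are indeed just definition-unwinding, exactly as in the paper's argument.
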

\begin{proof}
The first statement follows from Theorem~\ref{th:Om_K} and the definition of the DT-invariants $\Om_D^{\ge0}(r,d)$ and $\Om_D^{\ge0,\nil}(r,d)$.
We prove the second.
If $d\ge(2g-2)\binom r2$ then $\Om_K(r,d)=\Om_K^{\geq 0}(r,d)$,
$\sA_{r,d}=\sA^{\geq 0}_{r,d}$,
hence $\Om_K(r,d)=q\sA_{r,d}$ by the first statement.
For arbitrary $r,d$ we note that
$\Om_K(r,d)=\Om_K(r,d+r)$ and $\sA_{r,d}=\sA_{r,d+r}$.
\end{proof}

\vsp{.1in}

Let us now turn to the case of quiver sheaves. We do not know of a formula similar to those of Theorem~\ref{th:Om_K} expressing the volume of $\QS_0^{\geq 0}(\ur,\ud)$ or $\QS_0^{\nil}(\ur,\ud)$ in terms of Kac polynomials $\sA_{r,d}$. However, one still has the following relation between the volumes of $\QS_0^{\geq 0}(\ur,\ud)$
and $\QS_0^{\geq 0,\nil}(\ur,\ud)$.

\vsp{.1in}

%

\begin{proposition}\label{prop:qsnil}
We have the following equality of formal series in $\mathbb{T}$:
$$\sum_{\ur,\ud} \sI^{\geq 0}_{0}(\ur,\ud)w^{\ur}z^{\ud}
=\left(\sum_{\ur,\ud} \sI^{\geq 0,\nil}_{0}(\ur,\ud)w^{\ur}z^{\ud}\right) \cdot \Exp\rbr{\sum_{r,d} \sA^{\geq 0}_{r,d}\;w^{r\delta}z^{d\delta }}$$
where $\delta=(1,1,\ldots, 1) \in \mathbb{Z}^I$.
\end{proposition}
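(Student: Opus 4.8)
The plan is to decompose an arbitrary $D$-twisted quiver sheaf over the trivial divisor ($D = 0$, so that $E[1] = (E_{i+1})_i$ with no twist) according to the structure of the underlying graded sheaf, separating the ``genuinely cyclic'' part from the ``constant'' part. When $l = 0$, a quiver sheaf $\oE = (E,\te)$ is a graded sheaf $E = (E_i)_{i \in I}$ together with maps $\te_i : E_i \to E_{i+1}$. The composite $\te^n : E_i \to E_i$ is an endomorphism, and one can hope to use its generalized eigenspace/Fitting decomposition: $E$ splits canonically as $\oE^{\nil} \oplus \oE^{\inv}$, where $\oE^{\nil}$ is the nilpotent part (where $\te^n$ acts nilpotently, i.e. $\oE^{\nil}$ is a nilpotent quiver sheaf in the sense defined above) and $\oE^{\inv}$ is the part where $\te^n$ acts invertibly. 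On $\oE^{\inv}$, since all the $\te_i$ are then isomorphisms $E_i \xrightarrow{\sim} E_{i+1}$, the quiver sheaf is determined up to isomorphism by the single sheaf $E_0$ together with the automorphism $\te^n \in \Aut(E_0)$ — that is, $\oE^{\inv}$ corresponds to a module over $\Coh(X)[t,t^{-1}]$, equivalently a coherent sheaf on $X$ equipped with an automorphism. This is exactly the local structure that produces the plethystic exponential factor $\Exp(\sum_{r,d} \sA^{\geq 0}_{r,d}\, w^{r\delta}z^{d\delta})$, since such data decompose (by the theory over $\Coh(X) \otimes \bk[t,t^{-1}]$, or equivalently by looking at indecomposables of the localized category) into indecomposables indexed by indecomposable vector bundles on $X$ over the various finite extensions, with the variable shift $(r,d) \mapsto (r\delta, d\delta)$ recording the fact that each such piece contributes equally to all $n$ vertices.

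Concretely, I would first set up the canonical Fitting-type decomposition at the level of categories: prove that the category $\cA_0$ of $0$-twisted quiver sheaves (restricted to $\cA_0^{\geq 0}$) decomposes as a product of the subcategory of nilpotent quiver sheaves and the subcategory of ``invertible'' quiver sheaves, with no extensions between the two directions contributing to the Hall-algebra product in the relevant sense — more precisely, that every object splits uniquely and that $\Hom$ and $\Ext^1$ respect the splitting. The key point is that if $\te^n$ acts invertibly on $E_0$, then each $\te_i$ is injective with cokernel killed by an invertible operator, forcing each $\te_i$ to be an isomorphism; and the decomposition $E_0 = \ker(\te^{ns}) \oplus \im(\te^{ns})$ for $s \gg 0$ is functorial. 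Then I would translate this into an identity in the completed Hall algebra $\cH_0$: $\sum_\ga \one^{\geq 0}_\ga = \left(\sum_\ga \one^{\geq 0, \nil}_\ga\right) \cdot \left(\sum_\ga \one^{\geq 0, \inv}_\ga\right)$, where the product makes sense because of the vanishing of cross-extensions (or because the decomposition is a genuine categorical direct product on the relevant subcategories).

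Next I would apply the integration map $I : \cH_0 \to \bT$. Here one must check that $I$ is multiplicative on this particular product. Since $l = 0 > 2g-2$ is \emph{not} generally true (indeed $0 \geq 2g-2$ only when $g \leq 1$), I cannot invoke Corollary~\ref{cr:sst zero} directly; instead I would argue that because the decomposition $\oE = \oE^{\nil} \oplus \oE^{\inv}$ is a direct sum with $\Hom(\oE^{\nil}, \oE^{\inv}) = \Hom(\oE^{\inv}, \oE^{\nil}) = 0$ and, crucially, $\Ext^1(\oE^{\nil}, \oE^{\inv}) = \Ext^1(\oE^{\inv}, \oE^{\nil}) = 0$ as well (any extension in either direction splits, by the uniqueness of the Fitting decomposition applied to the extension), the integration map is multiplicative on these two factors by the general criterion (the $\Ext^2$-vanishing hypothesis is automatic here since all the cross-Homs and cross-Ext$^1$'s vanish, and one checks the Euler-form exponents add up — this uses that $\hi_0(\oE^{\nil} \oplus \oE^{\inv}, -) $ splits additively and the quantum-torus twist $\ang{-,-}$ is bilinear). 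Applying $I$ to the nilpotent factor gives $\sum \sI^{\geq 0,\nil}_0(\ur,\ud) w^{\ur}z^{\ud}$ by definition.

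Finally I would compute $I$ of the invertible factor and identify it with $\Exp(\sum_{r,d} \sA^{\geq 0}_{r,d}\, w^{r\delta}z^{d\delta})$. This is the analogue, for the localized category, of the computation in Theorem~\ref{th:Om_K}: an invertible quiver sheaf in $\cA_0^{\geq 0}$ of class $\ga$ is the same as a pair $(E_0, \phi)$ with $E_0 \in \Coh^{\geq 0}(X)$ of rank $r$, degree $d$ (and $\ga = (r\delta, d\delta)$ necessarily, since all $E_i \cong E_0$), and $\phi \in \Aut(E_0)$; such pairs decompose into indecomposables indexed by indecomposable objects of $\Coh^{\geq 0}(X_k)$ for the various $k$, and the same groupoid-cardinality bookkeeping as in the proof of Theorem~\ref{th:Om_K} — now with the automorphism-of-a-fixed-sheaf count rather than the Higgs-field count — yields precisely $\Exp$ of the Kac-polynomial generating series with the $\delta$-shifted variables. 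The Euler-form prefactors all cancel because an invertible quiver sheaf $\oE$ with all $\te_i$ isomorphisms has $\hi_0(\oE,\oE) = 0$ (the long exact sequence of Theorem~\ref{th:Om_K}'s ambient result forces $\Hom(\oE,\oE) \cong \Hom_{\cA}(E,E)$ cancelling against $\Hom(E,E[1]) \cong \Hom_\cA(E,E)$, etc., via the isomorphisms $\te_i$), so the integration map contributes no sign twist on this factor.

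The main obstacle I anticipate is the second step: verifying carefully that the integration map is multiplicative across the nilpotent/invertible splitting, i.e. that $I\left(\one^{\geq 0,\nil} \cdot \one^{\geq 0,\inv}\right) = I(\one^{\geq 0,\nil}) \cdot I(\one^{\geq 0,\inv})$. One needs the vanishing of \emph{all} cross-$\Ext$'s (including $\Ext^2$, which in $\cA_D$ is generally nonzero) between a nilpotent and an invertible quiver sheaf, and then a clean statement of the Hall-algebra/integration-map compatibility in that situation; the Fitting-decomposition functoriality that guarantees this vanishing is intuitively clear but needs to be written out for coherent sheaves (where ``eigenspace'' arguments must be replaced by $\ker/\im$ of high powers). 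Everything else — the categorical direct-product decomposition and the groupoid-cardinality computation of the invertible factor — is a routine adaptation of arguments already present in the paper and in \cite{mozgovoy_motivicb}.
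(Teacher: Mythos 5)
Your proposal is correct and follows essentially the same route as the paper: the paper isolates the same two pieces (the nilpotent part and the part on which all $\te_i$ are isomorphisms) via the canonical filtration $\oE'=\lim_s\te^s(\oE)\subseteq\oE$, explicitly remarks that $\Ker(\te^s)$ for $s\gg0$ yields the canonical splitting you call the Fitting decomposition, and identifies the invertible factor with $\Exp\rbr{\sum_{r,d}\sA^{\geq 0}_{r,d}w^{r\delta}z^{d\delta}}$ by the same groupoid count (the stack of such objects has volume equal to the number of isomorphism classes of positive sheaves of class $(r,d)$). The cross-$\Ext$ vanishing you flag as the main obstacle does hold — $\Hom$ between the nilpotent and invertible parts vanishes in both directions, the relevant $\Ext^2$ vanishes by Serre duality since the twist $[-1](K)$ preserves the ``isomorphism'' subcategory, and the ordering issue in the quantum torus is harmless because classes proportional to $\delta$ are central — so your argument goes through.
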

\begin{proof}
Note that the subalgebra $\bigoplus_{r,d} \mathbb{Q}(q^{\frac{1}{2}})w^{r\delta}z^{d\delta}$ of $\mathbb{T}$ is commutative hence the plethystic exponential is well-defined.
 Let $\cA_0^{\geq 0,\isom}$ be the full subcategory of $\cA^{\geq 0}_0$ consisting of quiver sheaves
$\oE=(E_i,\theta_i)_i$ for which $\theta_i:E_i \simeq E_{i+1}$ for all $i$. We claim that any object $\oE \in \cA^{\geq 0}_0$ has a unique subobject $\oE'$ satisfying
$$\oE' \in \cA_0^{\geq 0, \isom}, \qquad \oE/\oE'\in \cA_0^{\geq 0, \nil}.$$
To see this, consider the decreasing filtration 
$\oE\sps \theta(\oE)\sps \theta^2(\oE) \dots$.
Since $\End(\bigoplus_i E_i)$ is finite-dimensional, this filtration stabilizes and we let $\oE'$ denote its limit. By construction and because $\Coh^{\geq 0}(X)$ is stable under taking quotients, $\oE' \in \cA_0^{\geq 0, \isom}$ and $\oE/\oE' \in \cA_0^{\geq 0, \nil}$. This shows the existence of a filtration of the desired form. Unicity comes from the easily checked fact that $\Hom(\oE',\oE'')=\{0\}$ whenever $\oE' \in \cA^{\isom}$ and $\oE''\in \cA_0^{\nil}$. Setting $\oE''=\Ker(\theta^n)$ for $n \gg 0$ yields in fact a canonical splitting of the exact sequence $0 \to \oE' \to \oE \to \oE/\oE' \to 0$ but we won't need this.
Put $\ga=(\ur,\ud)$ and
$$\sI^{\geq 0,\isom}_0(\ur,\ud)
=\sum_{\oE\in\cA^{\geq 0,\isom}_0(\ur,\ud)}(-q^\oh)^{\hi_0(\ga,\ga)}\frac{1}{\n{\Aut\oE}}.$$
From the unicity of the filtration $\oE' \subseteq \oE$ above we have by a standard argument in the Hall algebra
$$\sum_{\ur,\ud}\sI^{\geq 0}_{0}(\ur,\ud)w^{\ur}z^{\ud}=\left(\sum_{\ur,\ud}\sI^{\geq 0,\nil}_{0}(\ur,\ud)w^{\ur}z^{\ud}\right)\cdot \left(\sum_{\ur,\ud}\sI^{\geq 0,\isom}_{0}(\ur,\ud)w^{\ur}z^{\ud}\right).$$
Observe that $\sI^{\geq 0, \isom}_0(\ur,\ud) =0$ unless $(\ur,\ud)=(r\delta,d\delta)$ for some $(r,d)$. All that remains to prove is the following equality:
\begin{equation}\label{E:nilisolast}
\sum_{r,d}\sI^{\geq 0,\isom}_{0}(r\delta,d\delta)w^{r\delta}z^{d\delta}= \Exp\rbr{\sum_{r,d} \sA^{\geq 0}_{r,d}\;w^{r\delta}z^{d\delta }}.
\end{equation}
The proof of that last statement is of a similar nature to that of Theorem~\ref{th:Om_K}. Let $\QS^{\geq 0, \isom}_0(r\delta,d\delta)$ be the stack parametrizing objects in $\cA^{\geq 0, \isom}_0$ of class $(r\delta,d\delta)$.
 Consider the forgetful map $\pi:\QS^{\geq 0,\isom}_0(r\delta,d\delta) \to \sCoh^{\geq 0}(r,d)$. For any
positive coherent sheaf $E \in \sCoh^{\geq 0}(r,d)$, the fiber of $\pi$ contributes a volume of
$\prod_i \n{\Aut E} / \prod_i \n{\Aut E}=1$. It follows that 
\begin{equation}\label{E:froup}
\vol(\QS^{\geq 0, \isom}_0(r\delta,d\delta)(\bk))
=\n{\set{E \in \Coh^{\geq 0}(X)\dv \cl E=(r,d)} / \sim}.
\end{equation}
Let us denote by $m_{r,d}$ the r.h.s.\ of (\ref{E:froup}). The equality (\ref{E:nilisolast}) is now a consequence of the next lemma (cf.~\cite[Lemma 5]{mozgovoy_computational}).
\end{proof}

\begin{lemma} We have $\sum_{r,d} m_{r,d}w^rz^d=\Exp\left( \sum_{r,d} \sA^{\geq 0}_{r,d}\;w^rz^d\right)$.
\end{lemma}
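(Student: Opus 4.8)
The plan is to prove the identity $\sum_{r,d} m_{r,d}\w^r\z^d = \Exp\rbr{\sum_{r,d}\sA^{\geq 0}_{r,d}\w^r\z^d}$ by the usual Krull--Schmidt argument: every object $E \in \Coh^{\ge 0}(X)$ decomposes uniquely (up to reordering and isomorphism) as a direct sum $E \iso \bigoplus_{\iota \in \Ind} E_\iota^{n_\iota}$, where $\Ind$ is the set of isomorphism classes of absolutely indecomposable positive coherent sheaves, and $n : \Ind \to \bN$ has finite support. The point count $m_{r,d}$ is the number of isomorphism classes of positive sheaves of class $(r,d)$, which is therefore $\n{\set{n : \Ind \to \bN \dv \sum_\iota n_\iota \cl E_\iota = (r,d),\ \text{finite support}}}$.

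First I would set up the bookkeeping: group the indecomposables by class, writing, for each $(r,d)$, the number of isoclasses of indecomposables of that class as $\iota_{r,d}$, so that the generating series is $\sum_{r,d} m_{r,d}\w^r\z^d = \prod_{(r,d)} \rbr{\sum_{k \ge 0} \w^{kr}\z^{kd}}^{\iota_{r,d}} = \prod_{(r,d)} (1 - \w^r\z^d)^{-\iota_{r,d}}$. Taking $\log$ and using $-\log(1-t) = \sum_{k\ge1} t^k/k$ gives $\log\rbr{\sum m_{r,d}\w^r\z^d} = \sum_{k\ge1}\frac1k \sum_{(r,d)} \iota_{r,d}\,\w^{kr}\z^{kd} = \sum_{k\ge1}\frac1k \psi_k\rbr{\sum_{(r,d)} \iota_{r,d}\,\w^r\z^d}$, which is precisely the definition of $\Exp$ applied to $\sum_{(r,d)}\iota_{r,d}\w^r\z^d$. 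So it remains to identify the count $\iota_{r,d}$ of isoclasses of absolutely indecomposable positive sheaves with the Kac polynomial $\sA^{\geq 0}_{r,d}$ evaluated at the Weil numbers of $X$.

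The comparison with $\Exp$ here must respect the plethystic convention recalled in the Introduction, namely that the Adams operator $\psi_k$ acts on everything $X$-dependent by $\psi_k(X) = X \ts_{\bF_q}\bF_{q^k}$. So the content of the remaining step is: the number of isomorphism classes of \emph{absolutely} indecomposable positive coherent sheaves on $X$ of class $(r,d)$ over $\bF_q$ equals $\sum_{k \dv \gcd} \tfrac1k \cdot (\text{number of } \bF_{q^k}\text{-rational isoclasses of positive indecomposables over } X_k)$-type Galois-descent corrected count --- equivalently, $\iota_{r,d}$ is the $\bF_q$-point count and $\sA^{\geq 0}_{r,d}$ is the count of geometrically indecomposable ones, and the two are interchanged exactly by one application of $\Exp$/$\Log$ with the above $\psi_k$. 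This is the standard relationship between the count of indecomposables and the count of absolutely indecomposables (cf.~\cite[Lemma 5]{mozgovoy_computational}, and the analogous statement for quiver representations in \cite{mozgovoy_motivicb}); I would invoke it directly. The main obstacle, such as it is, is purely bookkeeping: one must be careful that ``positive'' ($\mu_{\min} \ge 0$) is a condition stable under base field extension and under Krull--Schmidt summands, so that $\Ind$ and the counts $\iota_{r,d}$, $\sA^{\geq 0}_{r,d}$ are all compatible with the $\psi_k$-action --- this follows since $\mu_{\min}$ of a direct sum is the minimum of the $\mu_{\min}$'s of the summands, and since the HN filtration (hence $\mu_{\min}$) commutes with extension of the base field.
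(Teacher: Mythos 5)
Your proof is correct and follows essentially the same route as the paper: Krull--Schmidt gives $\sum_{r,d} m_{r,d}w^rz^d=\prod_{r,d}(1-w^rz^d)^{-|\Ind^{\ge0}_{r,d}|}$, and the Galois-descent relation between indecomposables over $\bF_q$ and absolutely indecomposables (formulas (2.4)--(2.5) of \cite{schiffmann_indecomposable}, which is exactly what the paper invokes) converts this product into $\Exp$ of the Kac-polynomial series with the $X$-twisting Adams operators. One wording slip: in the Krull--Schmidt step your $\Ind$ must be the set of isoclasses of sheaves indecomposable over $\bF_q$, not \emph{absolutely} indecomposable ones --- as your final paragraph makes clear you intend.
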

\begin{proof}
The proof is close to that of \cite[Theorem 5.1]{mozgovoy_motivicb}
or of \cite[Proposition. 2.2]{schiffmann_indecomposable}. For any $l \in \mathbb{N}$, let us denote by $\Ind^{\geq 0}_{(r,d),l}$ the set of isoclasses of indecomposable positive coherent sheaves $E$ on $X$ of class $(r,d)$ for which $E \otimes_{\bk}\overline{\bk}$ splits as a direct sum of $l$ geometrically indecomposable coherent sheaves. Note that
$\Ind^{\geq 0}_{(r,d),l}$ is empty unless $l \dv \gcd(r,d)$, see \cite[Lemma~2.6]{schiffmann_indecomposable}. By \cite[(2.4), (2.5)]{schiffmann_indecomposable} we have, for every $n, r,d$
$$\sA^{\geq 0}_{X_n,r,d}=\sum_{l\dv n}l \n{\Ind^{\geq 0}_{(lr,ld),l}}$$
and
$$\sum_{l \geq 1}\sum_{r,d} \frac{1}{l} \sA^{\geq 0}_{X_l,r,d}\;w^{lr}z^{ld}=\sum_{l \geq 1}\sum_{r,d} \frac{1}{l} \n{\Ind^{\geq 0}_{r,d}}w^{lr}z^{ld}.$$
We deduce that
\begin{multline*}
\Exp\left(\sum_{r,d} \sA^{\geq 0}_{X,r,d}\;w^{r}z^{d}\right)=\exp\left(\sum_{l \geq 1}\frac{1}{l}\sA_{X_l,r,d}\;w^{lr}z^{ld}\right)=\prod_{r,d}\exp\left(\sum_{r,d} \n{\Ind^{\geq 0}_{r,d}}\;w^{lr}z^{ld}\right)\\
=\prod_{r,d} \frac{1}{(1-w^rz^d)}\n{\Ind^{\geq 0}_{r,d}}=\sum_{r,d}m_{r,d}w^rz^d,
\end{multline*}
as wanted.
\end{proof}

\vsp{.2in}

\section{Counting nilpotent quiver sheaves}
\label{sec: nilp}
\vspace{.1in}

The purpose of this section is to give an explicit formula counting the nilpotent quiver sheaves (of fixed rank and degree) 
which belong to $\cA_D^{\ge0}$, \textit{under the assumption that $l\leq0$}.
As in \cite{schiffmann_indecomposable} (in the special case $D=0$), we first stratify the collection of such nilpotent quiver sheaves according to some Jordan type, and then reduce the computation of the count for each strata to the computation of some truncated Eisenstein series.

\vspace{.1in}

\subsection{Jordan stratification}
\label{sec:jordan}
We do not assume that $l \leq 0$ here.
Let $\oE=(E,\te)\in\cA_D$.
For any $k\ge0$, define $\te^k$ to be the composition
$$E\to E[1]\to\dots\to E[k]$$
and set $F_k=\im\te^k[-k]\sbs E$.
Assume that $(E,\te)$ is a nilpotent quiver sheaf, of nilpotency index~$s$.
By construction we have a chain of inclusions
$$E=F_{0}\hlr F_{1}\hlr F_{2}\hlr\cdots\hlr F_{s}=0$$
and a chain of epimorphisms 
$$E=F_{0}\epi F_{1}[1]\epi F_{2}[2]\epi\cdots\epi F_{s}[s]=0.$$
Let us set
$$F'_{k}=\ker(F_{k}\to F_{k+1}[1]), \qquad F''_{k}=\coker(F_{k+1}\to F_{k}).$$
Then we have a chain of inclusions
$$F'_{0}\hlr F'_{1}\hlr F'_{2}\hlr\cdots\hlr F'_{s}=0$$
and a chain of epimorphisms
$$F''_{0}\epi F''_{1}[1]\epi F''_{2}[2]\epi\cdots\epi F''_{s}[s]=0.  $$
Let us finally set
$$\al_{k}=\cl F''_{k}[k]-\cl F''_{k+1}[k+1]\in\Ga,
\qquad k\ge0.$$

\begin{lemma}\label{L:Formulas1}
The following hold:
$$\cl F''_{k}=\sum_{j \ge k} \al_{j}[-k], \qquad
\cl F'_{k}=\sum_{j \ge k} \al_{j}[-j].$$
\end{lemma}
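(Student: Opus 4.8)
The plan is to unwind the definitions of the various sheaves $F_k, F'_k, F''_k$ and track how the class $\cl$ behaves under the shift $[1]$ and the passage to kernels/cokernels. First I would observe that from the two chains of epimorphisms $F''_0 \epi F''_1[1] \epi \cdots \epi F''_s[s] = 0$ we may telescope: for each $k \ge 0$,
\[
\cl F''_k[k] = \sum_{j \ge k} \bigl(\cl F''_j[j] - \cl F''_{j+1}[j+1]\bigr) = \sum_{j \ge k} \al_j,
\]
since the sum is finite (it stabilizes once $j \ge s$, where $F''_s = 0$). Applying the shift $[-k]$, which is additive on classes because $\gamma \mapsto \gamma[1]$ is a group homomorphism on $\Ga$, yields $\cl F''_k = \sum_{j \ge k} \al_j[-k]$, which is the first identity. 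This step is essentially bookkeeping once one knows that $[1]$ and $[-1]$ act linearly on $\Ga$ and are mutually inverse, which follows from the explicit formula $\ga[1] = (r_{i+1}, d_{i+1} + l r_{i+1})_i$ given in the excerpt.

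For the second identity, the key is to relate $F'_k$ to the $F''_j$'s. By definition $F'_k = \ker(F_k \to F_{k+1}[1])$ and $F''_k = \coker(F_{k+1} \to F_k)$; the map $F_k \epi F_{k+1}[1]$ is surjective (it is one of the epimorphisms in the chain), so we get a short exact sequence $0 \to F'_k \to F_k \to F_{k+1}[1] \to 0$, giving $\cl F'_k = \cl F_k - \cl F_{k+1}[1]$. Meanwhile the inclusion $F_{k+1} \hlr F_k$ gives $\cl F''_k = \cl F_k - \cl F_{k+1}$. The plan is to express $\cl F_k$ in terms of the $\al_j$ using the first identity applied to a telescoping of $\cl F_k$: indeed one can write $\cl F_k = \cl F''_k + \cl F_{k+1} = \cl F''_k + \cl F''_{k+1} + \cl F_{k+2} = \cdots = \sum_{j \ge k} \cl F''_j$, using $F_s = 0$. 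Substituting $\cl F''_j = \sum_{m \ge j} \al_m[-j]$ and carefully reindexing the double sum, each $\al_m$ appears with coefficient $\sum_{j=k}^{m} [-j]$ applied to it. Then $\cl F_{k+1}[1]$ contributes $\sum_{j \ge k+1} \cl F''_j[1] = \sum_{m \ge k+1} \bigl(\sum_{j=k+1}^m [-j]\bigr)\al_m [1] = \sum_{m \ge k+1}\bigl(\sum_{j=k+1}^m [1-j]\bigr)\al_m$. Taking the difference $\cl F'_k = \cl F_k - \cl F_{k+1}[1]$, most terms in the nested sums telescope against each other, and I expect the surviving contribution of $\al_m$ (for $m \ge k$) to collapse to exactly $\al_m[-m]$, giving $\cl F'_k = \sum_{j \ge k} \al_j[-j]$.

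The main obstacle I anticipate is purely combinatorial: keeping the nested index manipulations straight and making sure the telescoping in the difference $\cl F_k - \cl F_{k+1}[1]$ genuinely leaves only the $[-m]$ term for each $\al_m$, with no leftover boundary terms. A cleaner route, which I would try first, is to prove the second identity by a \emph{direct} short exact sequence argument rather than via $\cl F_k$: namely, identify $F'_k$ inside the picture so that the successive quotients $F'_{k}/F'_{k+1}$ (from the chain $F'_0 \hlr F'_1 \hlr \cdots \hlr F'_s = 0$) have an explicit class in terms of the $\al_j$. Concretely, one expects a commutative diagram relating the $F'$-filtration to the $F''$-cofiltration under the shift operators, from which $\cl(F'_k/F'_{k+1}) = \al_k[-k]$ falls out; summing over $j \ge k$ then gives the claim. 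Either way the verification that $F_k \epi F_{k+1}[1]$ is surjective (so that the relevant sequences are short exact) should be noted explicitly, since that is what makes $\cl$ additive along these chains.
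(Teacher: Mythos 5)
Your proof is correct and follows essentially the same route as the paper's: the first identity by telescoping the definition of $\al_k$, and the second via the relations $\cl F_k=\sum_{j\ge k}\cl F''_j$ and $\cl F'_k=\cl F_k-\cl F_{k+1}[1]$ (the paper states these two relations and leaves the index bookkeeping implicit, which you carry out correctly). Your remark that the surjectivity of $F_k\to F_{k+1}[1]$ should be checked is a fair point of care, and it holds since $\te$ maps $\im\te^k$ onto $\im\te^{k+1}$.
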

\begin{proof}The first statement is immediate from the definition of $\al_{k}$.
The second statement is then a consequence
of the relations
$$\cl F_{k}=\sum_{j \ge k}\cl F''_{j}$$
and
$$\cl F'_{k}=\cl F_{k}-\cl F_{k+1}[1].$$
\end{proof}

\vspace{.1in}

We will call the tuple $\ual=(\al_{k})_{k}$ the \textit{Jordan type} of $\oE=(E,\te)$.
For convenience, we will write 
\eq{f''_{k}(\ual)=\sum_{j \ge k} \al_{j}[-k]\in\Ga, \qquad f'_{k}(\ual)=\sum_{j \ge k} \al_{j}[-j]\in\Ga}
and set $|\ual|=\sum_k f''_{k}\in\Ga$.
Note that $\cl\oE=|\ual|$.

\vspace{.1in}

For any fixed tuple $\ual=(\al_{k})_{k}$, we write $\Nil_D(\ual)=\QS_D^\nil(\ual)$ for the locally closed substack of $\QS_D^\nil(\n\ual)$ whose objects are nilpotent quiver sheaves of Jordan type $\ual$.
Intersecting it with $\QS_D^{\ge0,\nil}{\n\ual}$ yields an open substack $\Nil_D^{\ge0}(\ual)=\QS_D^{\ge0,\nil}(\ual)$.

\subsection{The forgetful map}
For $\ual=(\al_0,\al_1,\dots,\al_{s-1})$ a tuple of elements of $\Ga$, we let $\Filt(\ual)$
denote the stack of chains of epimorphisms in $\cA$
$$E_0\epi E_1\epi\cdots\epi E_{s}=0$$
such that 
$$\cl\ker(E_k\epi E_{k+1})=\al_k \qquad \forall \;k=0,\dots,s-1.$$
We denote by
$\Filt^{\ge0}({\ual})$ the open susbstack of $\Filt({\ual})$ consisting of
chains $E_0\epi E_1\epi\cdots $ such that $E_0\in\cA^{\ge0}$.
We use notation $\fCoh(\ual)=\Filt(\ual)$ and $\fCoh^{\ge0}(\ual)=\Filt^{\ge0}(\ual)$ for $n=1$.

Consider the map (see~\S\ref{sec:jordan} for notation)
\eq{\label{eq:varpi}
\varpi_{\ual}:\Nil_D(\ual)\to\Filt(\ual),\qquad
(E,\te) \mto (F''_{0}\epi F''_{1}[1]\epi\cdots\epi F''_{s}[s]=0).}
From the fact that the category $\cA^{\ge0}$ is closed under taking quotients, it follows that $\varpi_{\ual}$
restricts to a map
$\Nil_D^{\ge0}(\ual)\to \Filt^{\ge0}({\ual})$.

\begin{proposition}\label{P:31}
The volume of the fiber of the map $\varpi_{\ual}$ over any object of
$\Filt({\ual})(\bk)$  is equal to 
$$\prod_{k \ge0}q^{-\chi( f''_{k}(\ual), f'_{k+1}(\ual))}.$$
\end{proposition}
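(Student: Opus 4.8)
The plan is to compute the fiber of $\varpi_{\ual}$ over a point of $\Filt(\ual)(\bk)$, which I will write as a chain $C=(W_0\epi W_1\epi\cdots\epi W_s=0)$ with $W_k=F''_k[k]$, by induction on $s$ (the nilpotency index of objects of Jordan type $\ual$), peeling off one step of the filtration of \S\ref{sec:jordan} at a time.

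First I would unwind the fiber: a point of $\varpi_{\ual}^{-1}(C)$ is a nilpotent quiver sheaf $\oE=(E,\te)$ of Jordan type $\ual$ together with an identification of its attached chain $(F''_0\epi F''_1[1]\epi\cdots)$ with $C$. The structural observation is that $\oE$ is an iterated self-extension whose layers carry the zero twisted endomorphism: since $\te(F_k)\sbs F_{k+1}[1]$, the quiver-subsheaf filtration $(F_0,\te)\sps(F_1,\te|_{F_1})\sps\cdots\sps(F_s,0)=0$ has successive quotients $(F''_k,0)$. Conversely $\oE$ is recovered from (a) the filtration $F_\bullet$ of $E\in\cA$ with subquotients $F_k/F_{k+1}\iso F''_k$, and (b) a filtered lift $\te\colon E\to E[1]$ of the graded map $(q_k)_k$ recorded by $C$, i.e. $\te(F_k)\sbs F_{k+1}[1]$ inducing $q_k$ on the $k$-th layer; since each $q_k$ is an epimorphism one checks directly, by descending induction, that $\te(F_k)=F_{k+1}[1]$ and hence $\im\te^k=F_k[k]$, so any such $\te$ automatically has Jordan type $\ual$ and nilpotency index $s$.

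For the induction I would peel off the top layer. The object $\oE$ sits in a short exact sequence in $\cA_D$
\[0\to(F_1,\te|_{F_1})\to\oE\to(F''_0,0)\to0,\]
where $(F_1,\te|_{F_1})$ is nilpotent of index $s-1$, its attached chain $C^{(1)}$ is obtained from $C$ by deleting $W_0$ and applying $[-1]$, and its Jordan type $\ual^{(1)}=(\al_1[-1],\dots,\al_{s-1}[-1])$ satisfies $f''_k(\ual^{(1)})=f''_{k+1}(\ual)$ and $f'_k(\ual^{(1)})=f'_{k+1}(\ual)$. The assignment $\oE\mto(F_1,\te|_{F_1})$ (with its induced chain-identification) exhibits $\varpi_{\ual}^{-1}(C)$ as fibered over $\varpi_{\ual^{(1)}}^{-1}(C^{(1)})$, whose volume is $\prod_{k\ge1}q^{-\chi(f''_k(\ual),f'_{k+1}(\ual))}$ by the inductive hypothesis; so it remains to show that the fiber of this map over a fixed $(F_1,\te_1)$ has volume $q^{-\chi(f''_0(\ual),f'_1(\ual))}$.

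This last step is the computational core, and I would carry it out by an explicit analysis of the relevant groupoid. Set $F'_1=\ker\te_1$, so that $0\to F'_1\to F_1\xrightarrow{\te_1}F_2[1]\to0$ is exact in $\cA$ and $\pi\colon F_1\epi F_2[1]$ denotes $\te_1$ onto its image. An object of the fiber over $(F_1,\te_1)$ is a class $e\in\Ext^1_\cA(F''_0,F_1)$ together with a lift $\te\colon E\to F_1[1]$ of $\te_1$ along $F_1\emb E$ whose induced map $F''_0=E/F_1\to F_1[1]/F_2[1]=F''_1[1]$ equals $q_0$; the automorphisms are the $\id+h$ with $h\in\Hom_\cA(F''_0,F_1)$, acting by $\te\mto\te-\te_1\circ h$. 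For an $e$ admitting such a $\te$, the set of such $\te$ is a torsor under $\Hom_\cA(F''_0,F_2[1])$, the $\Hom_\cA(F''_0,F_1)$-action factors through $\pi_*\colon\Hom_\cA(F''_0,F_1)\to\Hom_\cA(F''_0,F_2[1])$ with stabilizer $\ker\pi_*=\Hom_\cA(F''_0,F'_1)$, so this $e$ contributes $|\Hom_\cA(F''_0,F_2[1])|/|\Hom_\cA(F''_0,F_1)|$. Feeding the long exact sequences of $\Hom_\cA(F''_0,-)$ applied to $0\to F'_1\to F_1\to F_2[1]\to0$ and to $0\to F_2[1]\to F_1[1]\to F''_1[1]\to0$ (together with the Gothen--King sequence for $\Ext^\bullet_{\cA_D}((F''_0,0),(F_1,\te_1))$ recalled above) into the analysis identifies the admissible $e$'s with a coset of $\im(\Ext^1_\cA(F''_0,F'_1)\to\Ext^1_\cA(F''_0,F_1))$; multiplying its cardinality by the per-$e$ contribution collapses the cancelling $\Hom$- and $\Ext$-cardinalities to $|\Ext^1_\cA(F''_0,F'_1)|/|\Hom_\cA(F''_0,F'_1)|=q^{-\chi_\cA(F''_0,F'_1)}$, using that $\cA$ has cohomological dimension $1$. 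Since $\cl F''_0=f''_0(\ual)$ and $\cl F'_1=f'_1(\ual)$ by Lemma~\ref{L:Formulas1}, this is the required factor, and the induction closes; the base case $s=1$ (then $\te=0$, $C=(W_0\epi0)$, and the fiber has volume $1$) is immediate. The main obstacle I anticipate is precisely the bookkeeping in this last step: showing the fiber volume is independent of the chosen lower-stage datum, pinning down which extension classes $e$ admit a $\te$ inducing $q_0$, and dividing correctly by the automorphisms (of $\oE$ relative to the fixed sub and the fixed chain) so that all the auxiliary $\Hom$ and $\Ext$ cardinalities cancel as indicated.
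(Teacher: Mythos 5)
Your argument is correct and lands on the same per-layer factor $q^{-\chi(f''_k(\ual),f'_{k+1}(\ual))}$, but it organizes the iterated-extension structure genuinely differently from the paper. The paper introduces the auxiliary category $\cT$ of triples $(F^{(1)},F^{(2)},\te)$ and reconstructs an object of the fiber \emph{bottom-up}, as an iterated extension in $\cT$ of the pieces $\oF''_k=(F''_{k},F''_{k+1},\te)$; each step is controlled by a lemma computing the fibers of the forgetful functor from extension groupoids in $\cT$ to extension groupoids of the second components in $\cA$, proved via the Gothen--King long exact sequence and the vanishing of the relevant $\Ext^2$. You instead stay inside $\cA_D$ and $\cA$: you peel off the \emph{top} layer of the filtration by quiver subsheaves $(F_k,\te|_{F_k})$, induct on the nilpotency index, and count the top step explicitly as pairs (extension class $e\in\Ext^1_\cA(F''_0,F_1)$, lift $\te$ of $\te_1$ inducing $q_0$). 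The step you rightly flag as delicate --- which $e$ are admissible --- is settled by the standard criterion for completing a morphism of short exact sequences: a $\te$ as required exists iff $\pi_*(e)=q_0^*(e')$ in $\Ext^1_\cA(F''_0,F_2[1])$, where $e'$ is the class of $0\to F_2[1]\to F_1[1]\to F''_1[1]\to 0$ and $\pi=\te_1:F_1\epi F_2[1]$; since $\pi_*$ is surjective on $\Ext^1$ ($\cA$ has cohomological dimension one), the admissible classes form a nonempty coset of $\ker\pi_*$ of cardinality $\n{\Ext^1(F''_0,F_1)}/\n{\Ext^1(F''_0,F_2[1])}$, and multiplying by your per-class contribution $\n{\Hom(F''_0,F_2[1])}/\n{\Hom(F''_0,F_1)}$ indeed collapses to $q^{-\chi(F''_0,F_1)+\chi(F''_0,F_2[1])}=q^{-\chi(F''_0,F'_1)}$ by additivity of $\chi$ along $0\to F'_1\to F_1\to F_2[1]\to 0$. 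What your route buys is that all the homological algebra happens in $\cA$ itself, with no auxiliary category; what the paper's route buys is that a single lemma in $\cT$ treats all layers uniformly, without having to re-identify the admissible extension classes at each stage.
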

\begin{proof}
Let \cT be the category of triples $(F^{(1)},F^{(2)},\te)$, where $F^{(1)},F^{(2)}\in\cA$ and $\te:F^{(1)}\to F^{(2)}[1]$.
Given a nilpotent quiver sheaf $(E,\te)$, we can define objects $\oF_k=(F_k,F_{k+1},\te)\in\cT$, for $k\ge0$, together with monomorphisms
$$\oF_0\hlr\oF_1\hlr\oF_2\hlr\dots$$
By the discussion in the previous section, the category of nilpotent quiver sheaves of Jordan type $\ual$ is equivalent to the category $\cD$ consisting of tuples $(\oF_k\in\cT)_{k=0,\dots,s}$
equipped with a chain of monomorphisms 
$$\oF_0\hlr\oF_1\hlr\oF_2\hlr\dots,$$
isomorphisms $F^{(1)}_{k+1}=F^{(2)}_{k}$ for all $k$,
and satisfying $\cl F^{(1)}_{k}=\sum_{j\ge k} f''_{j}(\ual)$ for all $k$.
Under the equivalence $\Nil_D(\ual)(\bk) \simeq \ang{\cD}$ the map $\varpi_{\ual}$ is given by the functor 
$$\ang{\cD}\to\Filt(\ual)(\bk),\qquad
(\oF_k)_k\mto\rbr{
\oF_0^{(1)}/\oF_1^{(1)}\epi
(\oF_1^{(1)}/\oF_2^{(1)})[1]\epi\dots\epi
\oF_{s}^{(1)}[s]=0
}$$
Let $\bar H=(H_{0}\epi H_{1}\epi\cdots)$ be an object of
$\Filt({\ual})(\bk)$.
Let $F''_{k}=H_{k}[-k]$,
so that $\cl F''_{k}=f''_{k}(\ual)$.
Define
$$\oF''_k=(F''_{k}, F''_{k+1}, \theta)\in\cT,$$
where $\theta: F''_{k}\epi F''_{k+1}[1]$ is induced by the map $H_{k}[-k]\epi H_{k+1}[-k]$.
By construction, an object of the fiber of $\bar H$ corresponds to an iterated extension, in the category $\mathcal{T}$ of the objects $\oF''_k$. More precisely, we may canonically reconstruct objects $\oE$ of the fiber of $\bar H$ as follows: we inductively build
exact sequences in $\mathcal{T}$
\eq{\label{E:extt}
0\to\oF_{k+1}\to\oF_{k}\to\oF''_{k}\to0}
together with identifications
$F^{(2)}_{k}=F^{(1)}_{k+1}=:F_{k+1}$
$$\begin{tikzcd}
0\rar&F_{k+1}\rar[dashed]\dar{}&F_k\rar[dashed]\dar[dashed]&F_{k}''\rar\dar{\te}&0\\
0\rar&F_{k+2}[1]\rar&F_{k+1}[1]\rar&F_{k+1}''[1]\rar&0
\end{tikzcd}$$
starting from $\oF_{s}=\oF''_s=0$
and letting $k=s-1, \ldots ,0$; 
we then set $\oE=(F^{(1)}_{0}, \theta)$,
where $\te$ is the composition 
$F^{(1)}_{0}\to F^{(2)}_{0}[1] \simeq F^{(1)}_{1}[1] \hookrightarrow F^{(1)}_{0}[1]$. 

\vspace{.1in}

In order to keep track of these successive extensions, we will use the following result.
Let $\oE=(E^{(1)}, E^{(2)},\phi)$, $\oF=(F^{(1)}, F^{(2)}, \psi)$ be a pair of objects of $\cT$.
Consider the groupoid $\cC$ whose objects are short exact sequences
\eq{0\to\oF\to\oG\to\oE\to0}
in $\mathcal{T}$ and the groupoid $ \mathcal{C}'$ whose objects are short exact sequences
\eq{0\to F^{(2)}\to G\to E^{(2)}\to0.}
The set of isoclasses of objects in $\mathcal{C}$ is $\Ext^1_{\cT}(\oE,\oF)$ and,
for any $\eta\in \Ext^1_{\cT}(\oE,\oF)$, we have $\Aut(\eta)=\Hom_{\cT}(\oE,\oF)$.
Likewise, the set of isoclasses
of objects in $\mathcal{C}'$ is $\Ext^1_\cA(E^{(2)}, F^{(2)})$ and, for any $\ga\in\Ext^1_\cA(E^{(2)}, F^{(2)})$,
we have $\Aut(\ga)=\Hom_\cA(E^{(2)},F^{(2)})$.
There is an obvious forgetful functor $\Phi: \mathcal{C}\to \mathcal{C}'$.

\begin{lemma}\label{L:extt}
Assume that $\psi:F^{(1)}\to F^{(2)}[1]$ is an epimorphism. Then the orbifold volume of any fiber
of $\Phi:\cC\to\cC'$ is equal to $q^{-\chi (\oE,\oF) +\chi( E^{(2)}, F^{(2)})}$.
\end{lemma}
\begin{proof}
By \cite{gothen_homological} there is a long exact sequence
\begin{multline}\label{E:les}
\Ext^1_{\mathcal{T}}(\oE, \oF)\to \Ext^1_\cA(E^{(1)},F^{(1)}) \oplus \Ext^1_\cA(E^{(2)}, F^{(2)})\to
\Ext^1_\cA(E^{(1)},F^{(2)}[1])\to \\ 
\to \Ext^2_\cT(\oE,\oF)\to 0
\end{multline}
Because $\psi$ is an epimorphism, the map 
$\Ext^1(E^{(1)},F^{(1)})\to\Ext^1(E^{(1)},F^{(2)}[1])$ is onto by Serre duality.
It follows that $\Ext^2(\oE,\oF)=0$ and that the composed map
$$\Ext^1(\oE, \oF)\to\Ext^1(E^{(1)},F^{(1)}) \oplus \Ext^1(E^{(2)}, F^{(2)})\to
\Ext^1(E^{(2)},F^{(2)})$$
is surjective.
Therefore the functor $\Phi$ is essentialy surjective on objects and the set of isoclasses of
objects $\Phi^{-1}(\ga)$ is of cardinality 
$q^{\dim\Ext^1(\oE,\oF)-\dim\Ext^1(E^{(2)},F^{(2)})}$.
Taking into account the automorphisms of objects and using the fact that
$\chi(\oE,\oF)=\dim\Hom(\oE,\oF)-\dim\Ext^1(\oE,\oF)$
yields the statement of the lemma.
\end{proof}

\vspace{.1in}

We may now finish the proof of Proposition~\ref{P:31}. Starting from $\oF_s=\oF''_s$, we inductively build objects $\oF_k\in\cT$ and exact sequences
(\ref{E:extt}) in such a way that
$F^{(2)}_{k}=F^{(1)}_{k+1}=:F_{k+1}$ for all $k$.
We obtain inductively that the maps $F_k^{(1)}\to F_{k}^{(2)}[1]$ are epimorphisms.
By Lemma~\ref{L:extt}, each step contributes a factor of $q^{-\chi( \oF''_k, \oF_{k+1}) + \chi(F''_{k+1}, F_{k+2})}$ to the volume of the fiber.
It remains to observe that because of the exact sequence (\ref{E:les}), we have
\begin{equation*}
\begin{split}
-\chi(\oF''_k, \oF_{k+1}) +\chi( F''_{k+1}, F^{(2)}_{k+1}) &=-\chi(F''_{k}, F^{(1)}_{k+1})
+\chi(F''_{k},F^{(2)}_{k+1}[1])\\
&=-\chi(F''_{k},F^{}_{k+1}-F^{}_{k+2}[1])\\
&=-\chi(F''_{k},F'_{k+1}).
\end{split}
\end{equation*}
\end{proof}

\vspace{.1in}

From the formulas in Proposition~\ref{P:31} and Lemma~\ref{L:Formulas1} one finds that the volume of each fiber of $\varpi_{\ual}:\Nil_D(\ual)\to\Filt(\ual)$ is the same as that of an affine space of dimension equal to
\begin{equation}\label{E:fibersforget}
-\sum_{k \ge0} \chi( f''_{k}(\ual), f'_{k+1}(\ual))
=-\sum_{k \ge0} \sum_{\substack{l_1 \ge k \\ l_2 \ge k+1}} 
\chi(\al_{l_1}[-k],\al_{l_2}[-l_2])=:a_D(\ual).
\end{equation}

The map (cf.~\S\ref{sec:notation stacks})
\eq{\label{eq:stack vb}
\Filt(\ual)\to \prod_k \stA({\al_k}),\qquad
(E_0\epi E_1\epi\cdots\epi E_s=0)\mto (\ker(E_k\epi E_{k+1}))_k}
is a stack vector bundle of rank 
$-\sum_{j>k} \chi (\al_j, \al_k)$
(see \cite[\S3.1]{garcia-prada_motives}).
In particular, $\Filt({\ual})$ is smooth and 
\eq{\label{eq:vol chain}
\vol(\Filt({\ual})(\bk))=q^{-\sum_{j>k}\chi(\al_j,\al_k)}\prod_k \vol(\stA({\al_k})(\bk)).}
We obtain from \eqref{eq:vol chain} and Proposition \ref{P:31} that
\begin{equation}\label{E:volJordan}
\vol(\Nil_D(\ual)(\bk))
=q^{a_D(\ual)-\sum_{j>k}\chi(\al_{j},\al_{k})}\prod_{k} \vol(\stA({\al_{k}})(\bk)).
\end{equation}

\subsection{Volume of stacks of positive nilpotent quiver sheaves.}
We assume that $l \leq 0$. Fix $\al\in \Ga=(\mathbb{Z}^2)^I$ such that $\mu(\al)\ge0$. There are only finitely many $\ual $ satisfying $|\ual|=\al$ for which $\Nil^{\ge0}_D(\ual)$ is not empty; indeed, there are finitely many
possible choices for $f''_{i,k}$ satisfying $\mu(f''_{i,k}) \ge0$ and $\sum_k f''_{i,k}=\al_i$.
 
 \vspace{.1in}

\begin{proposition}\label{P:32} Assume that $l \leq 0$. Then the following diagram is cartesian
$$\begin{tikzcd}
\Nil_D^{\ge0}(\ual)\rar\dar["\varpi_{\ual}"']
&\Nil_D(\ual)\dar["\varpi_{\ual}"]\\
\Filt^{\ge0}(\ual)\rar &\Filt(\ual)
\end{tikzcd}$$
where the horizontal arrows stand for the open immersions.
\end{proposition}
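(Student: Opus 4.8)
The plan is to show that the two horizontal open immersions are compatible with the vertical maps $\varpi_{\ual}$, i.e.\ that $\Nil_D^{\ge0}(\ual)$ is precisely the preimage under $\varpi_{\ual}$ of $\Filt^{\ge0}(\ual)$ inside $\Nil_D(\ual)$. Concretely, the cartesianness amounts to the following statement: for a nilpotent quiver sheaf $\oE=(E,\te)$ of Jordan type $\ual$, one has $E\in\cA^{\ge0}$ (i.e.\ $\oE\in\Nil_D^{\ge0}(\ual)$) if and only if the top term $F''_0=\Coker(F_1\hlr F_0)=E/F_1$ of the associated chain $(F''_0\epi F''_1[1]\epi\cdots)$ lies in $\cA^{\ge0}$.

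First I would record the elementary implication: since $\cA^{\ge0}$ is closed under quotients, $E\in\cA^{\ge0}$ forces $F''_0=E/F_1\in\cA^{\ge0}$; this gives the inclusion $\Nil_D^{\ge0}(\ual)\hookrightarrow\varpi_{\ual}^{-1}(\Filt^{\ge0}(\ual))$, and this direction does not even use $l\le0$. The content is the converse. Here I would use the hypothesis $l\le0$ together with the structure of the chain $E=F_0\hlr F_1\hlr\cdots\hlr F_s=0$. The key observation is that each successive subquotient $F_k/F_{k+1}=F''_k[k]$, shifted back, carries the data making $F_k$ an extension of $F''_k$ by $F_{k+1}$; and because $F_{k+1}[k+1]$ is a \emph{quotient} of $F_k$ (indeed $F_{k+1}$ is the image of $\te$ restricted appropriately), when $l\le0$ the shift operation $[1]$ does not increase slopes, so one can propagate the condition $\mu_{\min}\ge0$ downward through the filtration. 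More precisely, I would argue by descending induction on $k$ that $F_k\in\cA^{\ge0}$: the case $k=s$ is trivial ($F_s=0$), and for the inductive step one has the exact sequence $0\to F_{k+1}\to F_k\to F''_k[k]\to0$ in $\cA$, where $F''_k\in\cA^{\ge0}$ is known (being a quotient of $F''_0\in\cA^{\ge0}$ via the chain of epimorphisms $F''_0\epi F''_1[1]\epi\cdots$, again using closure under quotients — here one needs $\mu_{\min}(F''_k[k])\ge0$, which follows from $\mu_{\min}(F''_k)\ge0$ and $l\le0$) and $F_{k+1}\in\cA^{\ge0}$ by induction; since $\cA^{\ge0}$ is closed under extensions, $F_k\in\cA^{\ge0}$. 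Taking $k=0$ gives $E=F_0\in\cA^{\ge0}$, as desired.

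Having established the set-theoretic (in fact groupoid-theoretic, at the level of $\bk$-points and more generally $R$-points) equality $\Nil_D^{\ge0}(\ual)=\varpi_{\ual}^{-1}(\Filt^{\ge0}(\ual))$, the cartesian square follows formally: $\Filt^{\ge0}(\ual)$ is an open substack of $\Filt(\ual)$ by construction, its preimage under $\varpi_{\ual}$ is an open substack of $\Nil_D(\ual)$, and we have just identified that preimage with the open substack $\Nil_D^{\ge0}(\ual)$. One should also note that the assertion ``$F''_k[k]\in\cA^{\ge0}$'' used above is exactly the observation already made in the text just after \eqref{eq:varpi} that $\varpi_{\ual}$ restricts to a map $\Nil_D^{\ge0}(\ual)\to\Filt^{\ge0}(\ual)$, combined with the fact that all $F''_k$ are quotients of $F''_0$.

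The main obstacle is making the downward induction genuinely watertight: one must be careful that the relevant exact sequences $0\to F_{k+1}\to F_k\to F''_k[k]\to 0$ are honest sequences in $\cA$ and that the slope bound $\mu_{\min}(F''_k[k])\ge 0$ really does follow from $l\le 0$ — i.e.\ that for $G\in\cA^{\ge0}$ one has $G[1]\in\cA^{\ge0}$ when $l\le0$, which is immediate from $\cl(G[1])=\cl(G)[1]$ and the formula $\ga[1]=(r_{i+1},d_{i+1}+lr_{i+1})_i$ decreasing each slope component by $|l|\ge 0$. Beyond this slope bookkeeping the argument is soft, relying only on the stability of $\cA^{\ge0}$ under extensions and quotients (as recorded in the paragraph introducing $\cA^{\ge0}$).
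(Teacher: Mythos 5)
Your overall strategy coincides with the paper's: reduce cartesianness to the pointwise statement that $E\in\cA^{\ge0}$ if and only if $F''_0\in\cA^{\ge0}$, obtain the forward implication from closure of $\cA^{\ge0}$ under quotients, and obtain the converse by writing $E=F_0$ as an iterated extension of the subquotients of the filtration $E=F_0\hlr F_1\hlr\cdots\hlr F_s=0$ and checking that each subquotient lies in $\cA^{\ge0}$. The formal passage from this pointwise equality of substacks to the cartesian square is also fine.

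However, the slope bookkeeping in your converse is carried out with the shift going the wrong way, and the auxiliary claim you isolate at the end --- ``for $G\in\cA^{\ge0}$ one has $G[1]\in\cA^{\ge0}$ when $l\le0$'' --- is false. Since $\ga[1]=(r_{i+1},d_{i+1}+lr_{i+1})_i$, the functor $[1]$ \emph{lowers} each slope by $|l|$ when $l\le 0$ (as you yourself observe), so it can take an object out of $\cA^{\ge0}$: take $G$ with $\mu_{\min}(G)=0$ and $l=-1$. The implication that is true, and that the argument actually needs, is the reverse one: $G[k]\in\cA^{\ge0}\Rightarrow G\in\cA^{\ge0}$ for $l\le0$, i.e.\ \emph{un}-shifting raises slopes. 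Relatedly, with the paper's conventions the subquotient of the filtration is $F_k/F_{k+1}=F''_k$ with no shift (recall $F''_k=\coker(F_{k+1}\to F_k)$), whereas the object that is a quotient of $F''_0$ along the chain of epimorphisms $F''_0\epi F''_1[1]\epi\cdots$ is $F''_k[k]$, not $F''_k$. The correct chain of deductions is therefore: $F''_k[k]\in\cA^{\ge0}$ by quotient-closure; hence $F''_k=(F''_k[k])[-k]\in\cA^{\ge0}$ because $[-k]$ raises slopes when $l\le0$; hence $E$, being an iterated extension of the $F''_k$, lies in $\cA^{\ge0}$ by extension-closure. Your two misidentifications nearly cancel each other, but as written the converse rests on a false lemma; once the direction of the shift-monotonicity is corrected, your proof agrees with the paper's.
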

\begin{proof}
We must show that $(E,\te)\in\Nil_D(\ual)$ belongs to $\Nil_D^{\ge0}(\ual)$ if and only if $F''_{0}$
belongs to $\cA^{\ge0}$.
Let us first assume that $(E,\te)$ belongs to $\Nil_D^{\ge0}(\ual)$.
As $\cA^{\ge0}$ is closed under taking quotients and $F''_{0}$ is a quotient of $F_{0}=E$
we have $F''_{0}\in\cA^{\ge0}$.
Conversely, assume that $F''_{0}\in\cA^{\ge0}$.
Then by the same argument, $F''_{k}[k]\in\cA^{\ge0}$ and hence $F''_{k}\in\cA^{\ge0}$, for all $k$, since $D$ is negative.
But since $E=F_0$ is a successive extension
of objects $F''_{0}, F''_{1},\dots$ and since $\cA^{\ge0}$ is stable under extensions, we deduce that $E$ belongs to $\cA^{\ge0}$ as well. We are done.
\end{proof}

As an immediate corollary of Propositions~\ref{P:31} and ~\ref{P:32} we obtain the following formula:

\begin{corollary}
\label{cor:vol of strata}
Assume that $l \leq 0$. Then the volume of the stack $\Nil_D^{\ge0}(\ual)$ is equal to
$$\vol\rbr{\Nil_D^{\ge0}({\ual})(\bk)}
=\sum_{|\ual|=\al} q^{a_D(\ual)} \vol\rbr{\Filt^{\ge0}({\ual})(\bk)},$$
where $a_D(\ual)$ is defined as the r.h.s of (\ref{E:fibersforget}).
\end{corollary}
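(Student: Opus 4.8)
The plan is to deduce Corollary~\ref{cor:vol of strata} essentially for free from the two propositions just established, together with the general principle that volumes of stacks behave multiplicatively along maps whose fibers are of constant volume. First I would recall the setup: by the Jordan stratification of \S\ref{sec:jordan}, the stack $\Nil_D^{\ge0}(\al)$ of positive nilpotent quiver sheaves of class $\al$ decomposes as a finite disjoint union of locally closed substacks $\Nil_D^{\ge0}(\ual)$ indexed by the Jordan types $\ual$ with $\n\ual=\al$, and as noted before the statement only finitely many such $\ual$ give a nonempty stratum (those with $\mu(f''_{i,k})\ge0$ and $\sum_k f''_{i,k}=\al_i$). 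Hence $\vol(\Nil_D^{\ge0}(\al)(\bk))=\sum_{\n\ual=\al}\vol(\Nil_D^{\ge0}(\ual)(\bk))$, and it suffices to compute the volume of each individual stratum.

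Next I would fix one Jordan type $\ual$ and consider the forgetful map $\varpi_{\ual}:\Nil_D(\ual)\to\Filt(\ual)$ from \eqref{eq:varpi}. By Proposition~\ref{P:31}, every fiber of $\varpi_{\ual}$ over a $\bk$-point of $\Filt(\ual)$ has the same volume, namely $\prod_{k\ge0}q^{-\chi(f''_k(\ual),f'_{k+1}(\ual))}$, which by the definition \eqref{E:fibersforget} equals $q^{a_D(\ual)}$. Since under the hypothesis $l\le0$ Proposition~\ref{P:32} tells us that the square relating $\Nil_D^{\ge0}(\ual),\Nil_D(\ual),\Filt^{\ge0}(\ual),\Filt(\ual)$ is cartesian with the horizontal maps open immersions, the restricted map $\varpi_{\ual}:\Nil_D^{\ge0}(\ual)\to\Filt^{\ge0}(\ual)$ has exactly the same fibers (each fiber over a point of $\Filt^{\ge0}(\ual)$ coincides with the corresponding fiber of the unrestricted $\varpi_{\ual}$), hence all its fibers again have volume $q^{a_D(\ual)}$. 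Integrating the fiber volume over the base then gives $\vol(\Nil_D^{\ge0}(\ual)(\bk))=q^{a_D(\ual)}\vol(\Filt^{\ge0}(\ual)(\bk))$, and summing over $\ual$ with $\n\ual=\al$ yields the claimed formula.

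The main thing to be careful about — and what I would consider the only real content beyond citing the two propositions — is the passage from "each fiber has volume $v$" to "$\vol(\text{total})=v\cdot\vol(\text{base})$" at the level of stacks over a finite field. This is the standard fibration/integration identity for groupoid cardinalities: for a morphism of stacks $f:\mathcal{Y}\to\mathcal{X}$ locally of finite type over $\bk$ with all fibers of the same volume $v$, one has $\vol(\mathcal{Y}(\bk))=v\cdot\vol(\mathcal{X}(\bk))$, proved by summing the local contributions $\sum_{y\in f^{-1}(x)(\bk)/\sim}\frac{1}{\n{\Aut y}}$ over representatives $x$ of $\mathcal{X}(\bk)/\sim$. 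Here $\Filt^{\ge0}(\ual)$ is of finite type and finite volume (it is a stack vector bundle over a product of stacks $\stA^{\ge0}(\al_k)$, each of finite volume, by \eqref{eq:stack vb}), and $\Nil_D^{\ge0}(\ual)$ is a locally closed substack of the finite-type stack $\QS_D^{\ge0,\nil}(\al)$, so all the sums converge and the identity applies verbatim. One small point worth a sentence is that the cartesian square in Proposition~\ref{P:32} guarantees the fiber of the restricted map over $x\in\Filt^{\ge0}(\ual)(\bk)$ is literally the fiber of the original map, so Proposition~\ref{P:31}'s computation transfers without change; there is nothing further to verify, and in particular no separate estimate is needed since $l\le0$ is exactly the hypothesis under which Proposition~\ref{P:32} holds.
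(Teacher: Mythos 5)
Your proof is correct and follows exactly the route the paper intends: the paper states the result as ``an immediate corollary of Propositions~\ref{P:31} and~\ref{P:32}'' without further elaboration, and your write-up (stratify $\Nil_D^{\ge0}(\al)$ by Jordan type, use the cartesian square of Proposition~\ref{P:32} to identify the fibers of the restricted $\varpi_{\ual}$ with those computed in Proposition~\ref{P:31}, then integrate the constant fiber volume $q^{a_D(\ual)}$ over the finite-volume base $\Filt^{\ge0}(\ual)$) is precisely the argument being invoked. You also correctly read the left-hand side as the volume of the full stack of positive nilpotent quiver sheaves of class $\al$, which is what the summation over $\n{\ual}=\al$ on the right presupposes.
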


The volumes of the stacks $\Filt^{\ge0}(\ual)=\prod_{i\in I}\fCoh^{\ge0}(\ual_i)$ have been explicitly computed in \cite{schiffmann_indecomposable}. This yields a closed (albeit complicated) formula for the volumes of all the stacks
$\Nil_D^{\ge0}(\ual)$. 


\vspace{.2in}

\section{Computation of DT invariants -- the Higgs case}
\label{sec6}
In this section we use the results of Sections~\ref{sec3} and \ref{sec4} to derive a closed formula for the volume of the stacks $\QS^{ss}_D(\alpha)$
when $n=1$ and $l\ge 2g-2$, i.e.\ when the moduli stack in
question is the moduli stack of semistable meromorphic Higgs bundles associated to a divisor $D$.
Note that the case $l=2g-2$ is covered by Corollary~\ref{cor:K0} and  \cite{schiffmann_indecomposable}.

\vsp{.1in}

Assume that $l=\deg D\le 0$.
We first observe that when $n=1$ we may associate a partition $\lambda(\ual)$ to any Jordan type $\ual$ by setting $\lambda(\ual)=(1^{r_1}, 2^{r_2}, \ldots)$, where $\al_i=(r_i,d_i)$. 
We then have (cf.~\eqref{E:fibersforget})
$$a_D(\ual)=a_0(\ual)
+\frac{l}{2}\rbr{\sum_i i r_i}^2-\frac{l}{2}\sum_{k\geq 1} \rbr{\sum_{i\geq k}r_i}^2
=a_0(\ual) + \frac{l}{2} r^2-\frac{l}{2}
\ang{\la(\ual),\la(\ual)}.$$

\vsp{.05in}

Let $J(r,d)$ stand for the set of all tuples $\ual=(\al_1, \ldots, \al_s)$ such that $\sum i \al_i=(r,d)$ and $\al_s \ne 0$, and let $J_\gen(r)$ stand for the set of all sequences $\ur=(r_1, \ldots, r_t)$ such that $\sum_i ir_i=r$ and $r_t\geq 1$.
There is a natural map
$\pi:J(r,d) \to J_\gen(r)$ which assigns to a tuple
$(\al_1, \ldots, \al_s)$ the sequence $(\rk \al_1, \ldots, \rk \al_s)$ in which all the \textit{last} zero entries have been removed. 
Let us set
$$\sI^{\geq 0,\nil}_D(\ual)=(-q^{\frac{1}{2}})^{-lr(\ual)^2}\vol(\Nil_D^{\geq 0})(\ual)(\bk)$$
where $r(\ual)=\sum_i i r_i$,
so that by Propositions \ref{P:32} and \ref{P:31} we have
$$\sI^{\geq 0,\nil}_{D}(r,d)
=\sum_{\ual \in J(r,d)} \sI^{\geq 0,\nil}_{D}(\ual)
=(-q^\oh)^{-lr^2} \sum_{\ual \in J(r,d)} q^{a_0(\ual)+\frac l2 r^2-\frac l2\langle \lambda(\ual), \lambda(\ual)\rangle}
\vol\rbr{\fCoh^{\geq 0}({\ual_i})(\bk)}.$$
Let us fix some $\ur=(r_1, \ldots, r_t) \in J_\gen(r)$ and put $\lambda=(1^{r_1}, 2^{r_2}, \ldots,t^{r_t})$. Using \cite[Sec.5.6]{schiffmann_indecomposable} we have
\begin{equation*}
\begin{split}
\sum_{\ual \in \pi^{-1}(\ur)} \sI^{\geq 0,\nil}_{D}(\ual)\z^{\sum_i i \deg \al_i}&
=(-q^{\frac{1}{2}})^{-l\ang{\la,\la}}\sum_{\ual \in \pi^{-1}(\ur)} q^{a_0(\ual)}
\vol\rbr{\fCoh^{\geq 0}({\ual})(\bk)}
\z^{\sum_i i \deg \al_i}\\
&=(-q^{\frac{1}{2}})^{-l\langle \lambda,\lambda\rangle} q^{(g-1)\langle \lambda,\lambda\rangle} J_{\lambda}(\z)H_{\lambda}(\z) \cdot 
\Exp\rbr{\frac{|X(\bk)|}{q-1}\cdot\frac{\z}{1-\z}}.
\end{split}
\end{equation*}
Summing over $\ur \in J_\gen(r)$ and over all $r$, we obtain the following formula:

\begin{equation}\label{E:sumgenJor}
\sum_{r,d} \sI^{\geq 0,\nil}_D(r,d) w^rz^d=\sum_{\lambda}(-q^{\frac{1}{2}})^{(2g-2-l)\langle \lambda,\lambda\rangle} J_{\lambda}(\z)H_{\lambda}(\z) \cdot \Exp\left( \frac{|X(\bk)|}{q-1} \cdot \frac{\z}{1-\z}\right).
\end{equation}

Let $\QS^{\geq 0, \nil}_{D,\bun}(r,d)$ denote the open substack of $\QS^{\ge0,\nil}_{D}(r,d)$ whose objects are vector bundles, and set
$\sI^{\geq 0, \nil}_{D,\bun}(r,d)
=(-q^\oh)^{-lr^2}\vol\rbr{\QS^{\geq 0, \nil}_{D,\bun}(r,d)(\bk)}$.
 
 \vsp{.1in}
 
 \begin{lemma} The following hold:
 \begin{enumerate}
 \item
 \begin{equation*}
\sum_{r, d} \sI^{\geq 0,\nil}_{D}(r,d)\w^r\z^d
=\sum_{r,d}\sI^{\geq 0,\nil}_{D,\bun}(r,d)\w^r\z^d \cdot
\sum_{d\ge0}\sI^{\geq 0,\nil}_{D}(0,d)\z^d.
\end{equation*}
\item
\[\sum_{d\ge0}\sI^{\geq 0,\nil}_{D}(0,d)\z^d
=\Exp\rbr{\frac{|X(\bk)|}{q-1}\sum_{d\ge1}\z^d}
=\Exp\rbr{\frac{|X(\bk)|}{q-1}\cdot\frac{\z}{1-\z}}.\]
\end{enumerate}
 \end{lemma}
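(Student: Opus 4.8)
The plan is to prove (i) via the canonical torsion subobject of a nilpotent Higgs sheaf, and (ii) by extracting the rank-zero part of the second formula of Theorem~\ref{th:Om_K}.

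\emph{Step 1 (torsion/bundle dichotomy).} For a nilpotent Higgs sheaf $\oE=(E,\te)\in\cA_D$, let $T(E)\sbs E$ be the torsion subsheaf. The image of a morphism out of a torsion sheaf is torsion, so $\te(T(E))$ lies in $T(E)(D)$, the torsion subsheaf of $E(D)$; hence $\bar T:=(T(E),\te|)$ is a subobject of $\oE$ with quotient $\bar V:=(E/T(E),\bar\te)$ whose underlying sheaf $V=E/T(E)$ is locally free, and both $\bar T,\bar V$ are again nilpotent. Since $\Coh^{\ge0}(X)$ is closed under quotients and extensions and contains every torsion sheaf, $E\in\cA^{\ge0}$ iff $E/T(E)\in\cA^{\ge0}$, while $\bar T$ is automatically positive. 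Moreover $\bar T$ is the \emph{unique} subobject of $\oE$ that is torsion with locally free quotient: for such a $\bar T'\sbs\oE$ one has $T'\sbs T(E)$ and $T(E)/T'$ embeds in the torsion-free sheaf $E/T'$, forcing $T'=T(E)$; conversely, an extension $0\to\bar T\to\oE\to\bar V\to0$ with $\bar T$ a positive nilpotent torsion Higgs sheaf and $\bar V$ a positive nilpotent Higgs \emph{bundle} has $T(E)=T$, so $\oE$ is positive and nilpotent.

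\emph{Step 2 (Hall algebra and integration).} Step~1 gives, in the completion of $\cH_D$, the identity
\[
\sum_{r,d}\one^{\ge0,\nil}_{(r,d)}
=\Big(\sum_{d\ge0}\one^{\ge0,\nil}_{(0,d)}\Big)\circ
\Big(\sum_{r,d}\one^{\ge0,\nil}_{\bun,(r,d)}\Big),
\]
where $\one^{\ge0,\nil}_{\bun,(r,d)}$ denotes the sum of $[\oE]$ over positive nilpotent Higgs \emph{bundles} of class $(r,d)$ and the torsion factor, being the subobject, is written on the left; by the uniqueness in Step~1 every $[\oE]$ occurs on the left with coefficient $1$, and for a fixed class only finitely many terms contribute, so the products converge. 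To apply the integration map $I$ one needs it to be multiplicative on this product, i.e.\ $\Ext^2_{\cA_D}(\bar V,\bar T)=0$ for $\bar V$ a Higgs bundle and $\bar T$ a torsion Higgs sheaf. By the Serre duality of Corollary~\ref{C:SD}, $\Ext^2_{\cA_D}(\bar V,\bar T)\iso\Hom_{\cA_D}(\bar T,\bar V[-1](K_X))^*$, and this $\Hom$ is contained in $\Hom_{\Coh(X)}(T,V(K_X-D))$, which is zero since $T$ is torsion and $V(K_X-D)$ is locally free. Applying $I$, using that $\bT$ is commutative for $n=1$ and that $e^{(0,d)}=\z^d$, $e^{(r,d)}=\w^r\z^d$, yields (i).

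\emph{Step 3 (rank-zero series).} For a torsion sheaf $T$ the twist $T(D)$ is non-canonically isomorphic to $T$, so the groupoid of nilpotent $D$-twisted torsion Higgs sheaves of degree $d$ is equivalent to the one for $D=0$; hence $\sI^{\ge0,\nil}_D(0,d)=\sI^{\ge0,\nil}_0(0,d)$. Since $\Exp$ of a power series divisible by $\w$ is $\equiv1\bmod\w$, setting $\w=0$ in the second formula of Theorem~\ref{th:Om_K} just replaces the argument of $\Exp$ by its $\w$-free part; using that $\sA^{\ge0}_{0,d}$, the number of absolutely indecomposable torsion sheaves of degree $d$, equals $|X(\bk)|$ for every $d\ge1$ (the indecomposable torsion sheaves of length $d$ supported at a rational point) and $0$ for $d=0$, we obtain
\[
\sum_{d\ge0}\sI^{\ge0,\nil}_0(0,d)\z^d
=\Exp\Big(\tfrac{|X(\bk)|}{q-1}\sum_{d\ge1}\z^d\Big)
=\Exp\Big(\tfrac{|X(\bk)|}{q-1}\cdot\tfrac{\z}{1-\z}\Big),
\]
which is (ii).

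\emph{Main obstacle.} The one genuinely delicate point is Step~2: as $\cA_D$ has homological dimension $2$, the integration map is not a ring homomorphism in general, so the vanishing $\Ext^2_{\cA_D}(\bar V,\bar T)=0$ must really be checked, and this is exactly where the torsion/locally-free dichotomy enters through Serre duality. The remaining ingredients are the standard Hall-algebra bookkeeping and the elementary structure theory of torsion sheaves on a smooth curve.
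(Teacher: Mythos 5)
Your proof is correct, and for part (i) it takes a genuinely different route from the paper's. The paper exploits the fact that on a smooth curve every coherent sheaf splits (non-canonically) as $F=V\oplus T$ with $V$ locally free and $T$ torsion, and that $\Hom(T,V(D))=0$: it computes the groupoid volume $\n{\Hom^{\nil}(F,F(D))}/\n{\Aut F}$ directly, observing that the off-diagonal block $\Hom(V,T(D))$ in the numerator cancels against the factor $\Hom(V,T)$ sitting inside $\Aut F$, so the count factors over pairs $(V,T)$ with no homological input at all. You instead work with the canonical (unsplit) torsion filtration $0\to\bar T\to\oE\to\bar V\to0$, encode its uniqueness as a factorization of characteristic functions in the Hall algebra, and push it through the integration map after checking $\Ext^2_{\cA_D}(\bar V,\bar T)=0$ via the Serre duality of Corollary~\ref{C:SD}. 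This is essentially the same mechanism the paper itself uses for the analogous factorization in Proposition~\ref{prop:qsnil}, and you correctly isolate the one delicate point (multiplicativity of $I$ in homological dimension $2$) and dispose of it; what the paper's argument buys is elementarity and independence from the Hall-algebra formalism, at the price of invoking the splitting $F=V\oplus T$, which is special to curves. Your verification of the conversely-positive/nilpotent claims in Step 1 and the ordering of the two factors (torsion as subobject on the left, matching the convention under which $I([E]\circ[F])=I(E)\circ I(F)$ requires $\Ext^{\ge2}(F,E)=0$) are both consistent with the paper's conventions. For part (ii), your argument — reduce to $D=0$ via $T(D)\iso T$ for torsion $T$, then extract the rank-zero part of the second formula of Theorem~\ref{th:Om_K} using $\sA^{\ge0}_{0,d}=\n{X(\bk)}$ for $d\ge1$ — coincides with the paper's.
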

 \begin{proof}
Let $F \in \Coh^{\geq 0}(X)$ be a coherent sheaf, and $F=V \oplus T$ be a decomposition as a direct sum of a vector bundle $V$ and a torsion sheaf $T$. Observe that $V$ and $T$ belong to $\Coh^{\geq 0}(X)$. We have
$$\Hom(F,F(D)) = \Hom(V,V(D)) \oplus \Hom(T,T(D)) \oplus \Hom( V,T(D))$$
and $\te \in \Hom(F,F(D))$ is nilpotent if and only if its projections to $\Hom(V,V(D))$ and $\Hom(T,T(D))$ are. On the other hand there is a canonical exact sequence
\begin{equation*}
1\to\Hom(V,T)\to\Aut F\to\Aut V\times \Aut T\to1.
\end{equation*}
We deduce that
$$\frac{\n{\Hom^{\nil}(F,F(D))}}{\n{\Aut F}}=\frac{\n{\Hom^{\nil}(V,V(D))}}{\n{\Aut V}}\cdot \frac{\n{\Hom^{\nil}(T,T(D))}}{\n{\Aut T}}.$$
Equation i) readily follows. Statement ii) is proved as the second equality of Theorem~\ref{th:Om_K}; observe that the number of absolutely indecomposable torsion sheaves of degree $d>0$ is $\n{X(\bk)}$), hence
\[\sum_{d\ge0}\sI^{\geq 0,\nil}_{D}(0,d)\z^d
=\Exp\rbr{\frac{|X(\bk)|}{q-1}\sum_{d\ge1}\z^d}
=\Exp\rbr{\frac{|X(\bk)|}{q-1}\cdot\frac{\z}{1-\z}}.\]
\end{proof}

The above lemma, together with equation \eqref{E:sumgenJor} implies that, for $\deg D\le0$,
$$
\sum_{r,d} \sI^{\ge0,\nil}_{D,\bun}(r,d)\w^r\z^d
=\sum_\la(-q^\oh)^{(2g-2-l)\ang{\la,\la}}
J_{\lambda}(\z)H_{\lambda}(\z) \w^{\n\la}.
$$
Therefore, for $l=\deg(D) >2g-2$, by Corollary~\ref{C:SDQS}
$$
\sum_{r,d} \sI^{\geq 0}_{D,\bun}(r,d)\w^r\z^{d}
=\sum_{r,d} \sI^{\ge0,\nil}_{K-D,\bun}(r,d)\w^r\z^d
=\sum_\la(-q^\oh)^{l\ang{\la,\la}}
J_{\lambda}(\z)H_{\lambda}(\z) \w^{\n\la}.
$$
Since any semistable Higgs pair of positive rank is a vector bundle, we have
\eq{
\sum_{r>0, d}\frac{\Om^{\geq 0}_D(r,d)}{q-1}\w^r\z^d
=\Log\rbr{\sum_{r,d}\sI^{\geq 0}_{D,\bun}(r,d)\w^r\z^d}
=\Log\rbr{\sum_\la(-q^\oh)^{l\ang{\la,\la}}
J_{\lambda}(\z)H_{\lambda}(\z) \w^{\n\la}}.}

By Proposition~\ref{P:1} we have $\Omega_D^{\geq 0}(r,d)=\Omega_D(r,d)$ for large large enough values of $d$ (depending on $r$), hence $\Om_D^{\ge0}(r,d)$ is $r$-periodic for $d \gg 0$. This implies that the rational function $X_r(z)$ defined in Theorem~\ref{T:Maain} is regular outside of $r$-th roots of unity and has at most simple poles. In addition, for large enough $N$,
$$\Om_D(r,d)=\Om_D^{\geq 0}(r,d+Nr)=-\sum_{\xi \in \mu_r}\xi^{-d}\Res_{z=\xi}\br{X_r(z) \frac{dz}{z}}$$
where $\mu_r$ stands for the set of $r$-th roots of unity.

If $D=K$, then by Corollary \ref{cor:K0}
\begin{multline}
\sum_{r>0, d}\frac{q\inv\Om^{\ge0}_K(r,d)}{q-1}\w^r\z^d
=\Log\rbr{\sum_{r,d}\sI^{\geq 0,\nil}_{0,\bun}(r,d)\w^r\z^d}\\
=\Log\rbr{\sum_\la(-q^\oh)^{(2g-2)\ang{\la,\la}}
J_{\lambda}(\z)H_{\lambda}(\z) \w^{\n\la}}.
\end{multline}
By the same argument as above, for large enough $N$,
$$q\inv\Om_K(r,d)=q\inv\Om_K^{\geq 0}(r,d+Nr)=-\sum_{\xi \in \mu_r}\xi^{-d}\Res_{z=\xi}\br{X_r(z) \frac{dz}{z}}.$$
Theorem~\ref{T:Maain} is proved.

\vsp{.2in}

\section{Harder-Narasimhan recursion -- the general quiver sheaf case}
\label{sec:HN recursion}
\vsp{.1in}

As mentioned above, the nonvanishing of the Euler form on the category of twisted quiver sheaves prevents us from
using the standard DT machinery (involving plethystic logarithms and exponentials) to compute explicitly the Poincar\'e 
polynomial of the moduli stacks of stable quiver sheaves. Nevertheless, these Poincar\'e polynomials are uniquely determined
by the knowledge of the collection of volumes $\sI^{\geq 0}_{D}(\alpha)$ for all $\alpha$, as the following general (and certainly well-known) result shows.

\vsp{.1in}
We consider the following data:
\begin{enumerate}
\item a commutative ring $R$ and an invertible element $t \in R^*$,
\item an $R$-valued skew-symmetric form $\ang{-,-}$ on a lattice $\Ga$ and two linear forms $\br,\bd : \Ga\to\bZ$,
\item a strictly convex cone $C\sbs\Ga_\bR=\Ga\ts_\bZ\bR$ (that is, $C$ is a cone and $f(C) \subsetneq \mathbb{R}$ for any linear form $f \in \Ga_\bR^*$), 
\item a map $a : C \cap \Ga \to R$
satisfying $a(0)=1$.
\end{enumerate}
Set $C_{\mathbb{Z}}=C \cap \Ga$. We further assume that $\br(C_{\mathbb{Z}}) \subset \mathbb{N}$, that $\br$ and $\bd$ do not vanish simultaneously on $C_{\mathbb{Z}}$ and that $\mu(C_{\mathbb{Z}})$ is bounded below,
where $\mu=\bd / \br$. This implies in particular that $\bd(\alpha) >0$ if
$\br(\alpha)=0$ and $\alpha \in C_{\mathbb{Z}}$.

\vsp{.05in}

From this data we define an algebra
$$\mathbb{T}:=\bigoplus_{\alpha \in \Ga} Re^\alpha, \qquad e^{\alpha} \circ e^{\beta} = t^{\langle \alpha,\beta\rangle}e^{\alpha+\beta},$$
and the formal sum 
$$A=\sum_{\alpha \in C_{\mathbb{Z}}} a(\alpha) e^\alpha.$$

\begin{proposition}\label{P:HNrec}
There exists a unique factorisation
\begin{equation}\label{E:HNrec}
A=\prod_{\tau \downarrow}A_{\tau}, \qquad A_{\tau}= \left(\sum_{\mu(\alpha)=\tau} b_{\alpha}e^{\alpha}\right).
\end{equation}
\end{proposition}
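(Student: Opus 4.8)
The plan is to mimic the classical Harder--Narasimhan argument in the abstract setting, noting that all the combinatorial bookkeeping is controlled by the strictly convex cone $C$ together with the slope function $\mu=\bd/\br$. First I would fix the grading: for each $\alpha\in C_\bZ$ the pair $(\br(\alpha),\bd(\alpha))$ determines the slope $\mu(\alpha)\in\bQ\cup\{+\infty\}$, and because $\br(C_\bZ)\subset\bN$, $\br$ and $\bd$ do not vanish simultaneously, and $\mu(C_\bZ)$ is bounded below, the set of slopes occurring in $C_\bZ$ is a discrete subset of $\bR\cup\{+\infty\}$ bounded below. Consequently, for any fixed $\alpha$ there are only finitely many ways to write $\alpha=\alpha_1+\dots+\alpha_s$ with $\alpha_i\in C_\bZ\setminus\{0\}$ and $\mu(\alpha_1)>\mu(\alpha_2)>\dots>\mu(\alpha_s)$: strict convexity of $C$ forces each such decomposition to be finite (the $\alpha_i$ lie in a compact slice of the cone and $\br(\alpha_i)\ge 1$ unless $\br(\alpha_i)=0$, in which case $\bd(\alpha_i)>0$, so in all cases a bounded positive linear functional is being partitioned). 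This finiteness is what makes the infinite product $\prod_{\tau\downarrow}A_\tau$ well-defined coefficient by coefficient in $\bigoplus_\alpha Re^\alpha$ (with the completion understood), and it is the technical heart of the statement.

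Next I would construct the factors $A_\tau$ and the coefficients $b_\alpha$ by the standard recursion on slope. Working in the completed algebra, I set, for each slope $\tau$ occurring in $C_\bZ$, $A_{\ge\tau}$ to be the product $\prod_{\tau'\ge\tau, \tau'\downarrow}A_{\tau'}$ of the not-yet-determined higher factors, and I define $A_\tau$ by the requirement that $A = A_{>\tau}\circ A_\tau\circ(\text{lower part})$. Concretely, order the slopes $\tau_1>\tau_2>\dots$ (bounded below, discrete) and define $b_\alpha$ for $\mu(\alpha)=\tau_k$ recursively: having determined all $A_{\tau_j}$ for $j<k$, the element $A_{\tau_1}\circ\cdots\circ A_{\tau_{k-1}}$ is invertible in $\bT$ (its degree-$0$ term is $1$), so I can set $A_{\ge\tau_k}:=(A_{\tau_1}\circ\cdots\circ A_{\tau_{k-1}})^{-1}\circ A$ and read off $A_{\tau_k}$ as the ``slope-$\tau_k$ part'' of $A_{\ge\tau_k}$. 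One must check that this slope-$\tau_k$ part is exactly what multiplies correctly, i.e.\ that $A_{\ge\tau_k} = A_{\tau_k}\circ A_{>\tau_k}$ where $A_{>\tau_k}$ has only terms of strictly smaller slope; this is where one uses that $\mu$ is additive-compatible in the sense that $\mu(\alpha+\beta)$ lies between $\mu(\alpha)$ and $\mu(\beta)$, so that a product $e^{\alpha}\circ e^{\beta}$ with $\mu(\alpha)=\tau_k$ and $\mu(\beta)<\tau_k$ lands in slopes $\le\tau_k$, never above. I would verify this intermediate-value property of $\mu$ directly from $\mu = \bd/\br$ and bilinearity of $\br,\bd$.

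For existence I then package the recursion: define $A_{\tau}=\sum_{\mu(\alpha)=\tau}b_\alpha e^\alpha$ with the $b_\alpha$ produced above, and check that $\prod_{\tau\downarrow}A_\tau$ converges in the completed $\bT$ and equals $A$ — convergence is precisely the finiteness-of-HN-decompositions fact from the first paragraph, and the equality follows because at each finite truncation (say, discarding all $e^\alpha$ with $\br(\alpha)+\bd(\alpha)$ large, which is a legitimate truncation since only finitely many slopes are involved below any bound) the partial product agrees with the corresponding truncation of $A$ by construction. For uniqueness, suppose $A=\prod_{\tau\downarrow}A'_\tau$ is another such factorisation with $A'_\tau$ supported in slope $\tau$ and constant term $1$; comparing the two products and using again that multiplying a slope-$\tau$ term by anything of smaller slope cannot raise the slope, one isolates the highest-slope factor and sees $A_{\tau_1}=A'_{\tau_1}$, then cancels and induces on the slope. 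The main obstacle I anticipate is not any single deep step but rather making the ``infinite ordered product'' and its manipulation rigorous: one must be careful that the completion $\bigoplus\to\prod$ is taken with respect to a filtration (by the values of $\br+\bd$, or equivalently by finite unions of slopes) for which all the products, inverses, and rearrangements above are continuous and well-defined, and that strict convexity of $C$ genuinely bounds the number of summands in each graded piece. Once that bookkeeping is set up, the HN recursion itself is formal.
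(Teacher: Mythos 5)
The paper itself gives no proof of Proposition~\ref{P:HNrec} (it is dismissed as ``certainly well-known''), so you are supplying the standard Harder--Narasimhan inversion argument, and your first paragraph correctly isolates the one substantive point: for fixed $\alpha\in C_{\mathbb{Z}}$ there are only finitely many decompositions $\alpha=\alpha_1+\dots+\alpha_s$ into nonzero elements of $C_{\mathbb{Z}}$ with $\mu(\alpha_1)>\dots>\mu(\alpha_s)$, so that the coefficient of $e^\alpha$ in the ordered product is the finite sum
\[
\sum_{\substack{\alpha=\alpha_1+\dots+\alpha_s\\ \mu(\alpha_1)>\dots>\mu(\alpha_s)}} t^{\sum_{i<j}\langle\alpha_i,\alpha_j\rangle}\,b_{\alpha_1}\cdots b_{\alpha_s}.
\]
However, the recursion you build on top of this is not well-founded. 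You assert that $\mu(C_{\mathbb{Z}})$ is a \emph{discrete} subset of $\mathbb{R}\cup\{\infty\}$ and enumerate the slopes in decreasing order $\tau_1>\tau_2>\dots$, defining $A_{\tau_k}$ by inverting $A_{\tau_1}\circ\cdots\circ A_{\tau_{k-1}}$ and likewise proving uniqueness by ``inducting on the slope''. In the situation of the paper one has $C=(\mathbb{N}^2)^I$, so $\mu(C_{\mathbb{Z}})=\mathbb{Q}_{\geq 0}\cup\{\infty\}$ is dense in $[0,\infty)$: there is no second-largest slope, the sequence $\tau_1>\tau_2>\dots$ does not exist, and both your existence and your uniqueness arguments collapse at this step.

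The fix is to induct on $\alpha$ rather than on the slope. Pick an integer $N$ with $N>-\inf\mu(C_{\mathbb{Z}})$ and set $f=\bd+N\br$; the hypotheses ($\br(C_{\mathbb{Z}})\subset\mathbb{N}$, $\bd>0$ where $\br=0$, $\mu$ bounded below) give $f(\beta)\in\mathbb{Z}_{>0}$ for every $\beta\in C_{\mathbb{Z}}\setminus\{0\}$, and for $C=(\mathbb{N}^2)^I$ the set of $\beta\in C_{\mathbb{Z}}$ with $f(\beta)\leq f(\alpha)$ is finite — this, rather than abstract strict convexity, is what really grounds the finiteness in your first paragraph. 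In any decomposition with $s\geq 2$ every part satisfies $f(\alpha_i)<f(\alpha)$, so the displayed identity reads $a(\alpha)=b_\alpha+(\text{a universal polynomial in the }b_\beta\text{ with }f(\beta)<f(\alpha))$, and induction on $f(\alpha)$ yields existence and uniqueness of the $b_\alpha$ simultaneously; this also disposes of uniqueness without any need to isolate a highest-slope factor. Your ``intermediate value'' observation about $\mu$ is correct but is only needed to identify which products of graded pieces contribute to a given slope, not to run the recursion.
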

\note{I suppose one needs completions for this theorem.
By our conventions from \S3.1, we
use product $b_{\al_1}\dots b_{\al_s}$ with $\mu(\al_1)>\dots>\mu(\al_s)$.
Therefore the proof should be formulated for the opposite algebra. I will comment it for now.
}

\vsp{.1in}

In our situation, assuming that $\deg D>2g-2$, we apply the above result to the following setting:
$R=\mathbb{Q}(q^{\frac{1}{2}})$, $t=-q^{\frac{1}{2}}$, $C=(\mathbb{N}^2)^I \subset \Ga=(\bZ^2)^I$,
$$\br(\al)=\sum_i r_i, \qquad \bd(\al)=\sum_i d_i,
\qquad\al=(\ur,\ud)\in\Ga=\bZ^I\oplus\bZ^I,
$$
the skew form on $\Ga$ is the one defined in Section~\ref{sec:invar}, and (see Corollary \ref{C:SDQS})
$$a_{\alpha}
=\sI^{\geq 0}_D(\alpha)
=\sI^{\ge0,\nil}_{K-D}(\al^*),\qquad
\al=(\ur,\ud)\in\Ga.$$
The latter invariants can be explicitly computed using Corollary \ref{cor:vol of strata} and results of \cite{schiffmann_indecomposable}.
By construction and formula \eqref{HN2}, the 
elements $b_{\alpha}$ uniquely determined by this data compute the volumes of the stacks of (positive) semistable quiver sheaves, i.e.
$$b_{\al}
=\sH^{\ge0}_D(\al)
=(-q^\oh)^{\hi_D(\al,\al)}\vol\rbr{\QS^{\geq 0,\sst}_D(\al)(\bk)}$$
and thus
$$\vol\rbr{\QS_D^\sst(\al)(\bk)}
=(-q^\oh)^{-\hi_D(\al,\al)}\lim_{N \to \infty} b_{(\ur,\ud+N\ur)}.$$

\providecommand{\bysame}{\leavevmode\hbox to3em{\hrulefill}\thinspace}
\providecommand{\href}[2]{#2}


\end{document}